\newtheorem{theorem}{Theorem}[section]
\newtheorem{proposition}[theorem]{Proposition}
\newtheorem{lemma}[theorem]{Lemma}
\newtheorem{problem}[theorem]{Open Problem}
\theoremstyle{remark}
\newtheorem{remark}[theorem]{Remark}
\newcommand{\R}{\mathbb{R}}
\newcommand{\N}{\mathbb{N}}
\newcommand{\C}{\mathbb{C}}
\DeclareMathOperator*{\essinf}{{\rm ess}\,{\rm inf}}
\newcommand{\Beta}{\mathrm{B}}
\DeclareMathOperator{\divergence}{div}
\DeclareMathOperator{\argmin}{arg\,min}
\DeclareMathOperator{\Span}{span}
\newcommand{\Res}[2]{\mathcal{A}[{#1},{#2}]} 
\title[The Bilaplacian with Robin boundary conditions]{The Bilaplacian with Robin boundary conditions}
\author{Davide Buoso}
\author{James B.~Kennedy}
\address{Davide Buoso, Dipartimento di Scienze e Innovazione Tecnologica, Universit\`a degli Studi del Piemonte Orientale ``A.\ Avogadro'', viale Teresa Michel 11, 15121 Alessandria, Italy}
\email{davide.buoso@uniupo.it}
\address{James B.~Kennedy, Grupo de F\'isica Matem\'atica \textit{and} Departamento de Matem\'atica, Faculdade de Ci\^encias, Universidade de Lisboa, Campo Grande, Edif\'icio C6, 1749-016 Lisboa, Portugal}
\email{jbkennedy@fc.ul.pt}
\keywords{Bilaplacian, biharmonic operator, Robin boundary conditions, asymptotic behaviour of eigenvalues.}
\subjclass[2010]{\text{Primary 35J40. Secondary 35B40, 35P15, 49R05, 49K20, 74K20}}
\begin{document}

\begin{abstract}
We introduce Robin boundary conditions for biharmonic operators, which are a model for elastically supported plates and are closely related to the study of spaces of traces of Sobolev functions. We study the dependence of the operator, its eigenvalues, and eigenfunctions on the Robin parameters. We show in particular that when the parameters go to plus infinity the Robin problem converges to other biharmonic problems, and obtain estimates on the rate of divergence when the parameters go to minus infinity. We also analyse the dependence of the operator on smooth perturbations of the domain, computing the shape derivatives of the eigenvalues and giving a characterisation for critical domains under volume and perimeter constraints. We include a number of open problems arising in the context of our results.
\end{abstract}

\maketitle

\section{Introduction}
\label{sec:intro}

The classical Robin problem for the Laplacian on a bounded domain $\Omega\subset\mathbb R^d$
\begin{equation}
\label{robinlaplacian}
\begin{aligned}
-\Delta u &=f && \text{in\ }\Omega,\\
\frac{\partial u}{\partial \nu} &= - \gamma u && \text{on\ }\partial\Omega,
\end{aligned}
\end{equation}
where $\nu$ is the outer unit normal to $\partial\Omega$, plays the role of an interpolation between its Neumann and Dirichlet counterparts, where the former is given by the limiting case $\beta\equiv0$, and the latter is obtained in a suitable limiting sense when $\gamma\to+\infty$. Physically speaking, in two dimensions these problems can be derived as models for the displacement $u$ of a membrane of shape $\Omega$ subject to an external load $f$ whose boundary is elastically supported, free, or fixed, respectively, problems which admit a natural hierarchy. In this situation the Robin parameter $\gamma$ may be taken as a non-negative function defined on the boundary $\partial \Omega$ which is related with the elastic response to the displacement $u$ from the position at rest, but is commonly taken as a constant; mathematically, it makes sense to consider real or even complex parameters. Properties of the Robin problem, in particular as regards the dependence of the associated operator upon the parameter $\gamma$, have been intensively studied in the literature, and we refer the reader to the survey \cite{bucur17} as well as, e.g., \cite{bkl19,cch,dk10,emp14,filinovskiy14,filinovskiy15,hkr17,khalile18,los98,lp08,pankpop,pankpop16} and the references therein for further information.

If instead of membranes we wish to model plates, then the prototype equation now involves the Bilaplacian $\Delta^2$, and Neumann and Dirichlet boundary conditions still represent the cases of a free boundary and of a clamped boundary, respectively. However, the boundary conditions for the Bilaplacian are dramatically different from their Laplacian counterparts: indeed, while the Dirichlet problem reads
\begin{equation*}
\begin{aligned}
\Delta^2 u &=f && \text{in\ }\Omega,\\
\frac{\partial u}{\partial \nu} &= 0 && \text{on\ }\partial\Omega,\\
u &= 0 && \text{on\ }\partial\Omega,
\end{aligned}
\end{equation*}
the Neumann problem now involves rather complicated boundary operators (cf.\ \cite{buoso16,chasman11,chasman15,verchota})
\begin{equation}
\label{neumannintro}
\begin{aligned}
\Delta^2u &=f && \text{in\ }\Omega,\\
(1-\sigma)\frac{\partial^2u}{\partial\nu^2}+\sigma\Delta u &=0 && \text{on\ }\partial\Omega,\\
-\frac{\partial\Delta u}{\partial\nu}-(1-\sigma){\divergence}_{\partial\Omega}\frac{\partial\ }{\partial\nu}\nabla_{\partial\Omega} u &=0 && \text{on\ }\partial\Omega,
\end{aligned}
\end{equation}
where $\nabla_{\partial\Omega}$ denotes the tangential gradient to $\partial\Omega$ and ${\divergence}_{\partial\Omega}$ denotes the tangential divergence (see \cite[Section 2]{chasman11} for a derivation of \eqref{neumannintro}, see also \cite[Ch. 8]{delzol} for more details on these tangential operators). We remark that the fact that this problem is of fourth order always requires the specification of two boundary conditions to be well posed. This in turn gives rise to the possibility of different combinations of boundary conditions, in particular those of \emph{intermediate type}, such as the so-called Navier boundary conditions, which arise when the plate is hinged (but not clamped), see e.g., problems \eqref{navier-robin}--\eqref{dirichlet}. Moreover, we observe here the appearance of an additional coefficient $\sigma\in(-\frac{1}{d-1},1)$, called the \emph{Poisson ratio}, which is related to the stiffness properties of the material. While in the Dirichlet case this coefficient has no effect whatsoever on the problem, in the other cases it is an important parameter. We refer to \cite{ggs} for further discussion of the Poisson coefficient (see also \cite{prozkala}).

A further natural variant of the problem arises when the plate is under stress; in this case, the equation $\Delta^2u=f$ changes, resulting in a lower order perturbation:
\begin{equation}
\label{perturbed}
\Delta^2 u - \alpha \Delta u=f.
\end{equation}
Here $\alpha\in\mathbb R$, where $\alpha>0$ corresponds to a plate under tension, while for $\alpha<0$ it is under compression instead.
The dependence on the tension parameter $\alpha$ has been considered in various contexts, both for the analysis of the behaviour of the eigenvalues and focusing on limiting regimes, see \cite{abf19,buoso16,buchapro,bupro,chasman11,chasman15,kaw}. We note also that this perturbation arises naturally in the context and study of buckling phenomena for plates, where $\alpha$ becomes a so-called eigenvalue of the operator pencil
\begin{equation}
\label{buckling}
\Delta^2 u=-\Lambda \Delta u.
\end{equation}
Problem \eqref{buckling} has been widely studied in the literature, as it shows features that stand right in between the Laplacian and the Bilaplacian. While we are interested in the dependence upon the tension parameter $\alpha$, we shall not go into the details of the  properties of buckling eigenvalues, and we refer to \cite{ggs,henrot} for a more complete picture of the problem and for historical references.

A natural question at this point to ask is what form Robin boundary conditions for Bilaplacians should take, or more generally for perturbed Bilaplacians as in \eqref{perturbed}. Starting from \eqref{neumannintro} and performing an analysis similar to that of the Laplacian, as we shall see below in more detail, one is quickly led to the following problem:
\begin{equation}
\label{robin1strongintro}
\begin{aligned}
\Delta^2u-\alpha\Delta u &=f && \text{in\ }\Omega,\\
(1-\sigma)\frac{\partial^2u}{\partial\nu^2}+\sigma\Delta u &=-\beta\frac{\partial u}{\partial\nu} && \text{on\ }\partial\Omega,\\
\alpha\frac{\partial u}{\partial\nu}-\frac{\partial\Delta u}{\partial\nu}-(1-\sigma){\divergence}_{\partial\Omega}\frac{\partial\ }{\partial\nu}\nabla_{\partial\Omega} u &=-\gamma u && \text{on\ }\partial\Omega.
\end{aligned}
\end{equation}
Similarly to problem \eqref{robinlaplacian}, when $d=2$ the equation \eqref{robin1strongintro} models the behaviour of a three-dimensional thin plate of cross-section $\Omega$ subject to the load $f$, and the solution $u$ represents the displacement of the section of the plate with respect to its position at rest. The constants $\alpha,\beta,\gamma,\sigma$ are mechanical constants related to the response of the material with respect to mechanical stimulations; in particular, as mentioned, the Poisson ratio $\sigma$ measures the stiffness of the material, and $\alpha$ is the ratio of tension to flexural rigidity. The constants $\beta$ and $\gamma$, on the other hand, measure the elastic response of the boundary, in the transversal and normal direction, respectively. In this sense, in analogy with the case of the Laplacian \eqref{robinlaplacian}, $\beta$ and $\gamma$ will be called the Robin parameters of the problem.

To the best of our knowledge, problem \eqref{robin1strongintro} has never appeared in the mathematical literature yet, even though its derivation seems quite natural (note that a very recent preprint \cite{chaslang} studies isoperimetric properties in the important special case $\sigma=\beta=0$, $\alpha>0$). It is our objective in this paper formally to introduce the operator associated with this problem and begin an investigation into its properties, in particular as regards its eigenvalues. In doing so we will focus on a set of questions which seem to be of most interest either at an intrinsic level or most natural based on what is known for the Robin Laplacian, such as dependence upon the parameters, limits with respect to the parameters as these diverge to plus or minus infinity, and dependence on the domain. Since we are undertaking a first investigation of a necessarily limited scope, more questions and conjectures will arise than can be treated here. We will thus include a number of open problems throughout the paper which, we believe, merit a more detailed study in the future.

First and foremost, we introduce a bilinear form (see \eqref{qf}) as the natural fourth-order analogue of the form associated with problem \eqref{robinlaplacian}; in turn, this form indeed has problem \eqref{robin1strongintro} as its strong formulation. In particular, we show that problem \eqref{robin1strongintro} is indeed associated with a self-adjoint operator on $L^2(\Omega)$ with compact resolvent, for any possible choice of the parameters (Theorem \ref{lem:robin-form}). Moreover, as should be expected, the dependence of the operator upon the parameters is smooth, so that the eigenvalues can be organised into analytic branches (Theorem \ref{lem:continuity-monotonicity}).

Once such a smooth dependence is established, the next natural question concerns the limiting behaviour, that is, what happens for $\beta\to\pm\infty$ and $\gamma\to\pm\infty$. In the case of the Robin Laplacian \eqref{robinlaplacian}, where there is only one parameter $\gamma$, the two extremities correspond to two very different situations: on the one hand, when $\gamma \to+\infty$ the operator converges to the Dirichlet Laplacian, and we have information on the rate of convergence of the eigenvalues and eigenfunctions: for example, at least on sufficiently smooth domains, the difference between the $n$-th Robin and the $n$-th Dirichlet Laplacian eigenvalue $\lambda_n$ is bounded by $C_\Omega \lambda_n^2 \gamma^{-1}$, where the constant $C_\Omega$ is independent of $n$ (see e.g., \cite{filinovskiy14,filinovskiy15}, or \cite[Section~4.4.1]{bucur17} for a summary). On the other hand, while the operator is still bounded from below for each fixed $\gamma<0$, as $\gamma \to -\infty$ there is now an infinite family of eigenvalues diverging to $-\infty$ like $-C_\Omega \gamma^2$, where the constant $C_\Omega \geq 1$ depends on smoothness properties of the boundary, with $C_\Omega = 1$ in the regular case (see, e.g., \cite{bkl19,dk10,emp14,hkr17,khalile18,los98,lp08,pankpop,pankpop16}, as well as \cite[Section~4.4.2]{bucur17}). However, once we exclude the divergent analytic branches, the remaining ones are strictly positive and accumulate at the Dirichlet Laplacian spectrum (see \cite[Theorem~1.5]{bkl19}).

In the case of the Bilaplacian, the two parameters are independent, so we can fix one and study the other. In particular, we see that if we fix $\beta$ and let $\gamma\to+\infty$, we obtain an intermediate problem of Robin type with parameter $\beta$ that is strongly linked to the Navier problem, which we shall call a \emph{Navier--Robin problem}, see \eqref{navier-robin}. If we reverse the order, namely we fix $\gamma$ and let $\beta\to+\infty$, we obtain another intermediate problem (see \eqref{kuttler-sigillito}), to which we will attach the names of Kuttler and Sigillito as it was introduced in \cite{kuttler68}. In both cases, if we then let the other parameter also tend to plus infinity, we recover the Dirichlet problem, showing that, despite the complications arising owing to the presence of multiple parameters, the Robin Laplacian and the Robin Bilaplacian share a common limiting behaviour in the ``plus infinity'' direction (see Theorems~\ref{thm:positive-convergence-1} and~\ref{thm:positive-convergence-2}).

When diverging towards minus infinity, however, the situation becomes much more involved. Firstly, we observe that for each $k \in \N$ the $k$-th eigenvalue of problem \eqref{robin1strongintro} diverges to minus infinity (Theorem~\ref{thm:steklovbuckling}). The most interesting result is perhaps the rate of divergence of the eigenvalues: for any $k\in\mathbb N$ we will show that $\lambda_k\sim |\beta|^4$ as $\beta\to-\infty$, and $\lambda_k \sim |\gamma|^{4/3}$ as $\gamma\to-\infty$, see Theorems~\ref{thm:divergence-basic},~\ref{thm:num-range-eig-div}, and~\ref{thm:divergence-1}. This asymptotic behaviour will be obtained by combining lower bounds obtained for the numerical range of the operator, generalising the arguments of \cite[Section~6]{bkl19}, with upper bounds given by suitable choices of test functions inspired by \cite{dk10}. 
Here we will not cover the intermediate Navier--Robin \eqref{navier-robin} and the Kuttler--Sigillito \eqref{kuttler-sigillito} problems, which require different techniques and will be left to a later work. However, they should display the same behaviour, as hinted by Theorem~\ref{thm:num-range-eig-div} (cf.\ Remark~\ref{rem:intermediate-regime}).
We will also leave for future work a closer analysis of the convergence behaviour in order to derive a more precise asymptotic expansion for the eigenvalues. The natural method for such an analysis would be to use Dirichlet-Neumann bracketing to isolate the behaviour at the boundary, as was done in \cite{pankpop} for the Robin Laplacian \eqref{robinlaplacian}; developing such a technique in the considerably more involved case of the Bilaplacian will require special treatment.

It is worth noting that the tension parameter $\alpha$ has a completely different nature. This parameter is clearly related to the behaviour in the interior of the plate rather than the boundary, so that no link is expected with the Robin coefficients, and this is a point of additional interest in the limiting situations for $\alpha$ as well. The regime $\alpha\to+\infty$ has been partially considered in the literature as a singular perturbation of the Laplacian (meaning that the order of the equation changes in the limit, cf.\ \cite{frank}), and here we examine this situation in more depth. If the Robin parameters are kept bounded, then unsurprisingly in the limit they simply disappear, making $\alpha$ the predominant parameter (see Theorem \ref{conv-alpha}). If, on the other hand, we posit a different relationship, in particular if we suppose that the Robin parameters also diverge linearly in $\alpha$, then we can recover Laplacians of Robin and Neumann type in the limit. We refer in particular to Theorem \ref{thm:strange-robin}, where in the case of less smooth domains a ``new'' kind of Laplacian emerges whose form domain involves a decoupling between the interior and the boundary of the domain; this fits neatly into the theory of $j$-elliptic forms developed in \cite{ate11,ate12}.

On the other hand, the direction $\alpha\to-\infty$ seems not to have been investigated so much. Recently, the behaviour of the eigenvalues for the Dirichlet problem when $\alpha\to-\infty$ was studied in \cite{abf19}. There, the analytic branches were completely identified when the domain is a ball, and it was shown for any domain and any $k \in \N$ that $\lambda_k +|\alpha|^2/4\in o(|\alpha|^2)$, meaning in particular that the asymptotic behaviour does not depend in any manner on the domain. It was also shown in \cite{abf19} that the same asymptotics holds for the classical Navier problem, that is, the Navier problem with $\sigma=1$. In the general Robin case we see that indeed the behaviour of the eigenvalues is $\mathcal O(|\alpha|^2)$, and we obtain $-1/4$ as an upper bound for the coefficient (Theorems~\ref{thm:divergence-basic},~\ref{thm:num-range-eig-div}(a), and~\ref{thm:divergence-1}(a)). In fact, the parabola $-|\alpha|^2/4$ acts as a separator between the Dirichlet problem and the Neumann (and more generally Robin) one. In \cite{abf19}, the precise value of the coefficient was obtained via an inclusion property that, unfortunately, is not available for boundary conditions other than Dirichlet ones. We nevertheless expect the same coefficient to apply in general; see Remark~\ref{rem:big-neg-alpha}.

We also investigate the dependence of the eigenvalues on smooth perturbations of the domain. While the proof of the shape continuity of the eigenvalues can be easily done as in \cite{cohi}, the question of analyticity is more subtle. In fact, while simple eigenvalues are analytic (see e.g., \cite{henry}), multiple ones show the occurrence of bifurcation phenomena, and can be split into analytic branches, but only when the parametrisation is given by one real parameter (cf.\ \cite[Theorem 1]{rellich}). General families of perturbations, though, cannot be parametrised in this way; so to overcome this problem we pass to the use of elementary symmetric functions of the eigenvalues, which bypass altogether such splitting phenomena, and can indeed be shown to be analytic (cf.\ \cite{buoso16,bula,lala2004,lala2007}). On the other hand, if the perturbation is not smooth, the spectrum may exhibit singular behaviour (see e.g., \cite{arfela,arla,fela}).

The analyticity of the symmetric functions of the eigenvalues allows us to consider their shape derivatives, that in turn lead to Hadamard-type formulae for the Robin problem \eqref{robin1strongintro} (Theorem \ref{duesettesys}). We then address the question of shape optimisation for eigenvalues. While for the Laplacian we have now quite a decent understanding of the subject (see \cite{bradep,henrot}), the theory is very much underdeveloped for the Bilaplacian, with comparatively few results available in the literature \cite{ash, bufega,buga11,buchapro,bupro,chasman11,chasman15,kuttler72,nadir}. However, combining the shape derivatives with the Lagrange Multiplier Theorem we obtain a nice characterisation of critical domains with respect to a volume constraint or a perimeter constraint (Theorem \ref{moltiplicatorisys}), and the fact that the operator is rotation invariant implies that balls are critical domains for any eigenvalue of \eqref{robin1strongintro} (Theorem \ref{lepallesys}; cf.\ \cite{bu15,bula}). This is in accordance with similar results already proven for other types of biharmonic problem, see \cite{buoso16} and the references therein.

Another similarity that the Robin Bilaplacian shares with the Robin Laplacian is the link with the Steklov problem, equivalently, the duality between the eigenvalues of the Robin Laplacian and the eigenvalues of the Dirichlet-to-Neumann operator, where the coefficient in the boundary condition becomes the spectral parameter. In the case of the Laplacian, this connection has been exploited in numerous contexts (as in, for example, the recent works \cite{bkl19,fl1,fl2}, among many others). This connection is rather straightforward for the Laplacian in the sense that there is a unique Steklov problem, and only one boundary parameter. The Bilaplacian, on the other hand, encompasses three completely different Steklov-type problems, two of which were introduced in \cite{kuttler68} and the third in \cite{bupro}. Indeed, for each of the three different Robin problems we consider, the ``Full Robin'' problem where $\beta,\gamma \in \R$, the Navier--Robin problem with $\gamma=+\infty$ and the Kuttler--Sigillito problem with $\beta=+\infty$, we obtain a different Steklov problem. But more than that, in the Robin case $\beta,\gamma \in \R$ there are of course two potential spectral parameters in the boundary conditions rather than just one. All this suggests that this biharmonic Robin-Steklov link should be far more intricate than for the Laplacian, and correspondingly more interesting. Here we do not exploit this link in detail, we just observe that it emerges naturally e.g., when studying divergence properties of the eigenvalues of Robin Bilaplacians, cf.\ Theorem~\ref{thm:steklovbuckling}. We also note in passing that this connection has a direct application to studying traces of $H^2$-functions, where these problems become the focus of the whole analysis. We will not cover this matter though, as the aim of the paper is different, and we refer the interested reader to the recent works \cite{laproz, lapro}, where in fact special cases of problem \eqref{robin1strongintro} do appear in the context of Steklov problems; more precisely, there the authors take $f=0$, and one of the Robin parameters is now treated as the eigenvalue, considered as dependent on the other parameter.

The paper is organised as follows. In Section \ref{sec:setting} we introduce the Robin problem \eqref{robin1strongintro} as well as the variants when $\beta=+\infty$ or $\gamma=+\infty$, together with the associated operators on $L^2(\Omega)$, and establish a number of elementary properties, in particular discreteness of the spectrum and smooth dependence on the parameters. In Section \ref{sec:large-positive} we prove the convergence to other biharmonic problems as the Robin parameters go to plus infinity, while Section \ref{sec:conv-lapl} is devoted to the case of convergence to the Laplacian as the tension parameter $\alpha$ goes to plus infinity. In Section \ref{sec:num-range} we establish the existence of divergent negative eigenvalues, and their rate of divergence, for large negative values of the parameters. Finally, in Section \ref{sec:shape-derivatives} we compute Hadamard-type formulae for the eigenvalues and study critical domains under volume and perimeter constraints.


\section{Robin-type boundary conditions for the Bilaplacian}
\label{sec:setting}

Let $\Omega\subset \mathbb R^d$ be a bounded domain with Lipschitz boundary. Let also $\alpha,\beta,\gamma\in \mathbb{R}$ and $\sigma\in(-\frac{1}{d-1},1)$. We introduce the following bilinear quadratic form
\begin{multline}
\label{qf}
\mathcal Q_{\Omega,\sigma,\alpha,\beta,\gamma}(u, v )=\int_{\Omega} \left((1-\sigma)D^2u:D^2 v +\sigma\Delta u\Delta v \right)\,dx
+\alpha\int_{\Omega} \nabla u\cdot \nabla v  \,dx\\
+\beta \int_{\partial\Omega}\frac{\partial u}{\partial\nu}\frac{\partial  v }{\partial\nu} \,d\mathcal{H}^{d-1}(x)
+\gamma \int_{\partial\Omega} u  v  \,d\mathcal{H}^{d-1}(x),
\end{multline}
defined for all $u \in H^2 (\Omega)$. Here $\nu$ denotes the outer normal vector to $\partial\Omega$ and $\mathcal{H}^{d-1}$ the $(d-1)$-dimensional surface (Hausdorff) measure on it, while $D^2u:D^2 v $ denotes the Frobenius product
$$
D^2u:D^2 v =\sum_{i,j=1}^d\frac{\partial^2u}{\partial x_i\partial x_j}\frac{\partial^2 v }{\partial x_i\partial x_j}.
$$
As is the case for the Neumann Bilaplacian, the restriction on $\sigma$ will be necessary to ensure coercivity of the form \eqref{qf} (see \cite{ggs,prozkala} and cf. also the proof of Theorem~\ref{lem:robin-form}, in particular \eqref{coerciveeq}, where one can see where the optimal constant $-1/(d-1)$ comes from).

We will call the following problem the (weak form of the) Robin problem for the Bilaplacian (or \emph{Full Robin problem}, FR):
\begin{equation}
\label{robin1weak}
\mathcal Q_{\Omega,\sigma,\alpha,\beta,\gamma}(u, v )=\int_\Omega f v  \,dx,
\end{equation}
for any function $ v \in H^2(\Omega)$, with $f\in L^2(\Omega)$. 
Problem \eqref{robin1weak} has the following strong formulation (cf.\ \eqref{robin1strongintro})
\begin{equation}
\label{robin1strong}
\begin{cases}
\Delta^2u-\alpha\Delta u=f, & \text{in\ }\Omega,\\
\Beta(u) = -\beta\frac{\partial u}{\partial\nu}, & \text{on\ }\partial\Omega,\\
\Gamma(u)=-\gamma u, & \text{on\ }\partial\Omega,
\end{cases}
\end{equation}
where we have used the short-hand notation
\begin{equation*}
\begin{aligned}
	\Beta(u) &:= (1-\sigma)\frac{\partial^2u}{\partial\nu^2}+\sigma\Delta u,\\
	\Gamma(u) &:= \alpha\frac{\partial u}{\partial\nu}-\frac{\partial\Delta u}{\partial\nu}-(1-\sigma){\divergence}_{\partial\Omega}\frac{\partial\ }{\partial\nu}\nabla_{\partial\Omega} u.
\end{aligned}
\end{equation*}

We also consider the same equation $\Delta^2 u -\alpha \Delta u = f$ equipped with any one of the three following sets of boundary conditions, corresponding formally to the cases $\gamma = +\infty$, $\beta = +\infty$, and $\beta = \gamma = +\infty$, respectively: we shall refer to these as the \emph{Navier--Robin} problem (or NR for short)
\begin{equation}
\label{navier-robin}
\begin{cases}
\Delta^2u-\alpha\Delta u=f, & \text{in\ }\Omega,\\
\Beta(u) = -\beta\frac{\partial u}{\partial\nu}, & \text{on\ }\partial\Omega,\\
u=0, & \text{on\ }\partial\Omega,
\end{cases}
\end{equation}
the \emph{Kuttler--Sigillito} (KS) problem
\begin{equation}
\label{kuttler-sigillito}
\begin{cases}
\Delta^2u-\alpha\Delta u=f, & \text{in\ }\Omega,\\
\frac{\partial u}{\partial\nu}=0, & \text{on\ }\partial\Omega,\\
\Gamma(u)=-\gamma u, & \text{on\ }\partial\Omega,
\end{cases}
\end{equation}
and the (classical) Dirichlet problem
\begin{equation}
\label{dirichlet}
\begin{cases}
\Delta^2u-\alpha\Delta u=f, & \text{in\ }\Omega,\\
u=\frac{\partial u}{\partial\nu}=0, & \text{on\ }\partial\Omega.\\
\end{cases}
\end{equation}

We are using the name {\it Navier--Robin problem} for \eqref{navier-robin} due to the obvious analogy with the classical Navier problem
$$
\begin{cases}
\Delta^2u=f, & \text{in\ }\Omega,\\
u=\Delta u=0, & \text{on\ }\partial\Omega,
\end{cases}
$$
which in turn can be obtained from \eqref{navier-robin} by setting $\alpha=\beta=0$ and $\sigma=1$ (see also \cite{abf19,buoso16}). On the other hand, the NR problem is also related to what is known as the classical Steklov Bilaplacian
\begin{equation}
\label{stek}
\begin{cases}
\Delta^2u-\alpha\Delta u=0, & \text{in\ }\Omega,\\
\Beta(u) = \lambda\frac{\partial u}{\partial\nu}, & \text{on\ }\partial\Omega,\\
u=0, & \text{on\ }\partial\Omega,
\end{cases}
\end{equation}
which has received much attention in recent years (cf.\ \cite{bufega,buga11,buoso16}). The eigenvalue problem \eqref{stek} was actually introduced in \cite{kuttler68}, where the authors proved a series of inequalities between eigenvalues of different second-order and fourth-order operators. We remark that the Rayleigh quotient associated with \eqref{stek} was first introduced in \cite{fichera}, but in a different context and with different purposes.

We shall call problem \eqref{kuttler-sigillito} the {\it Kuttler--Sigillito problem} since it was also introduced in \cite{kuttler68}, cf.\ \cite{lapro}, albeit as a Steklov-type problem and in the case $\alpha=0$, $\sigma=1$.

We also remark that the weak formulations of the Full Robin (FR) problem \eqref{robin1strong}, of the NR problem \eqref{navier-robin}, of the KS problem \eqref{kuttler-sigillito}, and of the Dirichlet problem \eqref{dirichlet} all involve the same form \eqref{qf} on the form domains $H^2(\Omega)$, $H^2(\Omega) \cap H^1_0 (\Omega)$, $\{u \in H^2 (\Omega) : \frac{\partial u}{\partial \nu}=0\}$, and $H^2_0 (\Omega)$, respectively, under the convention that $0 \cdot (+\infty) = 0$. As mentioned, we will always consider the latter three problems as (formally) corresponding to the cases $\gamma=+\infty$, $\beta=+\infty$, and $\beta=\gamma=+\infty$, respectively, a notational convention which will be justified by the convergence results of Section~\ref{sec:large-positive}.

Let us start by summarising a few basic properties of the form $\mathcal{Q}_{\Omega,\sigma,\alpha,\beta,\gamma}$.

\begin{theorem}
\label{lem:robin-form}
Fix $\alpha \in \R$, $\beta,\gamma \in (-\infty,+\infty]$ and $\sigma \in (-\frac{1}{d-1},1)$. The form $\mathcal{Q}_{\Omega,\sigma,\alpha,\beta,\gamma} : H^2 (\Omega) \times H^2 (\Omega) \to \R$ given by \eqref{qf} is bilinear, symmetric, continuous and $L^2$-elliptic (that is, coercive after adding a fixed multiple of the square of the $L^2(\Omega)$-norm).
\end{theorem}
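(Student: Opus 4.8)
The plan is to verify the four claimed properties in turn, the first three being essentially formal and the last one being the genuine content. Bilinearity is immediate from the definition, since every term in \eqref{qf} is linear in each of $u$ and $v$ separately; symmetry is equally immediate because $D^2u:D^2v$, $\Delta u\,\Delta v$, $\nabla u\cdot\nabla v$, $\tfrac{\partial u}{\partial\nu}\tfrac{\partial v}{\partial\nu}$ and $uv$ are all symmetric under exchange of $u$ and $v$. For continuity I would bound each term of $\mathcal{Q}_{\Omega,\sigma,\alpha,\beta,\gamma}(u,v)$ by $\|u\|_{H^2(\Omega)}\|v\|_{H^2(\Omega)}$ up to a constant: the two interior second-order terms are controlled by the Cauchy--Schwarz inequality in $L^2(\Omega)$ together with $|\sigma|<1$; the $\alpha$-term likewise; and the two boundary terms are handled by Cauchy--Schwarz in $L^2(\partial\Omega)$ followed by the continuity of the trace maps $H^2(\Omega)\to H^{3/2}(\partial\Omega)\hookrightarrow L^2(\partial\Omega)$ and $H^2(\Omega)\to H^{1/2}(\partial\Omega)\hookrightarrow L^2(\partial\Omega)$ for $\tfrac{\partial}{\partial\nu}$, which hold on Lipschitz domains. (In the case $\beta=+\infty$ or $\gamma=+\infty$ the corresponding boundary term is absent on the restricted form domain, so there is nothing to check.)

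The substantive point is $L^2$-ellipticity, i.e.\ that there exist $C>0$ and $\Lambda\in\R$ with $\mathcal{Q}_{\Omega,\sigma,\alpha,\beta,\gamma}(u,u)+\Lambda\|u\|_{L^2(\Omega)}^2\ge C\|u\|_{H^2(\Omega)}^2$ for all $u\in H^2(\Omega)$. The core is the purely fourth-order part $\int_\Omega\bigl((1-\sigma)|D^2u|^2+\sigma(\Delta u)^2\bigr)\,dx$. Writing $(\Delta u)^2=\bigl(\sum_i u_{ii}\bigr)^2$ and using the elementary inequality $\bigl(\sum_{i=1}^d a_i\bigr)^2\le d\sum_{i=1}^d a_i^2$ (with $a_i=u_{ii}$), together with the fact that $\sum_i u_{ii}^2\le |D^2u|^2$, one sees that for $\sigma\ge0$ the integrand is $\ge(1-\sigma)|D^2u|^2\ge0$, while for $\sigma<0$ one estimates $(1-\sigma)|D^2u|^2+\sigma(\Delta u)^2\ge |D^2u|^2 + |\sigma|\bigl(|D^2u|^2 - (\Delta u)^2\bigr)$ and bounds $(\Delta u)^2\le d\,\sum_i u_{ii}^2 \le d\,|D^2u|^2$; a cleaner route, and the one that exposes the constant $-1/(d-1)$, is to diagonalise the Hessian pointwise: if $\lambda_1,\dots,\lambda_d$ are its eigenvalues then the integrand is $(1-\sigma)\sum\lambda_i^2+\sigma(\sum\lambda_i)^2$, and the quadratic form $q(\lambda)=(1-\sigma)|\lambda|^2+\sigma(\sum\lambda_i)^2$ on $\R^d$ is positive definite precisely when $1-\sigma>0$ and $1-\sigma+d\sigma>0$, i.e.\ $\sigma\in(-\tfrac1{d-1},1)$, with smallest eigenvalue $\min\{1-\sigma,\,1+(d-1)\sigma\}=:c_\sigma>0$. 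Hence $\int_\Omega\bigl((1-\sigma)|D^2u|^2+\sigma(\Delta u)^2\bigr)\,dx\ge c_\sigma\|D^2u\|_{L^2(\Omega)}^2$, which is where the hypothesis on $\sigma$ is used.

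It then remains to absorb the three lower-order terms $\alpha\|\nabla u\|_{L^2(\Omega)}^2$, $\beta\|\tfrac{\partial u}{\partial\nu}\|_{L^2(\partial\Omega)}^2$, $\gamma\|u\|_{L^2(\partial\Omega)}^2$ (the last two only when finite), and to upgrade the resulting control of $\|D^2u\|_{L^2(\Omega)}$ and $\|u\|_{L^2(\Omega)}$ to control of the full $H^2$-norm. For the latter I would invoke the standard fact that $u\mapsto\bigl(\|D^2u\|_{L^2(\Omega)}^2+\|u\|_{L^2(\Omega)}^2\bigr)^{1/2}$ is an equivalent norm on $H^2(\Omega)$ (via the Gagliardo--Nirenberg interpolation inequality $\|\nabla u\|_{L^2}\le\varepsilon\|D^2u\|_{L^2}+C_\varepsilon\|u\|_{L^2}$, valid on bounded Lipschitz domains). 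The negative lower-order terms are then handled by the same interpolation together with a trace/interpolation estimate of the form $\|\tfrac{\partial u}{\partial\nu}\|_{L^2(\partial\Omega)}^2+\|u\|_{L^2(\partial\Omega)}^2\le\varepsilon\|D^2 u\|_{L^2(\Omega)}^2+C_\varepsilon\|u\|_{L^2(\Omega)}^2$, choosing $\varepsilon$ small relative to $c_\sigma$; what is left over is a (possibly large negative) multiple of $\|u\|_{L^2(\Omega)}^2$, which is exactly the term that $\Lambda\|u\|_{L^2(\Omega)}^2$ is allowed to compensate. The main obstacle, and the only place requiring care, is getting the sign condition on $\sigma$ to interact correctly with the coercivity constant $c_\sigma$ and ensuring the trace terms really are lower-order perturbations on a merely Lipschitz boundary; the rest is bookkeeping with Cauchy--Schwarz and interpolation.
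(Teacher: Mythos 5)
Your proposal is correct and follows essentially the same strategy as the paper: establish coercivity of the fourth-order part on $\|D^2u\|_{L^2}^2$ using the restriction $\sigma\in(-\tfrac{1}{d-1},1)$, then absorb the lower-order interior and boundary terms via interpolation and Ehrling-type trace estimates. The only minor difference is that you prove the fourth-order coercivity by pointwise diagonalisation of the Hessian, yielding the explicit constant $c_\sigma=\min\{1-\sigma,1+(d-1)\sigma\}$, whereas the paper uses the integrated inequality $\|D^2u\|_2^2\ge\tfrac1d\|\Delta u\|_2^2$ and the resulting norm equivalence; both are elementary and equivalent in strength.
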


\begin{proof}
Symmetry and bilinearity are clear. Continuity follows directly from the trace inequality, which allows the $L^2 (\partial\Omega)$-norm of both the trace and the normal derivative of a function $u \in H^2 (\Omega)$ to be controlled by $\|u\|_{H^2(\Omega)}$ (see \cite[Section 20]{beilni2}). So we only have to prove the $L^2$-ellipticity, that is, we seek constants $\omega \in \R$ and $c>0$ such that
\begin{equation}
\label{form-l2-ellipticity}
	\mathcal{Q}_{\Omega,\sigma,\alpha,\beta,\gamma} (u,u) + \omega \|u\|_2^2 \geq c\|u\|_{H^2 (\Omega)}^2
\end{equation}
for all $u$ in the form domain of $\mathcal{Q}_{\Omega,\sigma,\alpha,\beta,\gamma}$, where we use the notation $\|u\|_2 := \|u\|_{L^2(\Omega)}$. 
To obtain \eqref{form-l2-ellipticity}, we start by observing that since
\begin{equation}
\label{coerciveeq}
	\|D^2u\|_2^2\ge \frac 1 d \|\Delta u\|_2^2,
\end{equation}
the expression
\begin{equation}
\label{h2equivnorm}
	\|u\|_{2}^2 + (1-\sigma)\|D^2u\|^2_{2} + \sigma \|\Delta u\|^2_{2}, \qquad u \in H^2(\Omega),
\end{equation}
defines an equivalent norm on $H^2 (\Omega)$ for any $\sigma \in (-\frac{1}{d-1},1)$. 

We now consider the case $\beta,\gamma \in \R$; we fix $\alpha,\beta,\gamma \in \R$. Now the embedding $H^2(\Omega) \hookrightarrow H^1(\Omega)$ and the trace mapping $H^2(\Omega) \to L^2(\partial\Omega) \times L^2(\partial\Omega)$, $u \mapsto (u|_{\partial\Omega}, \frac{\partial u}{\partial\nu})$ are both compact, since $\Omega$ is assumed bounded and Lipschitz. Hence, by a standard Ehrling's lemma-type argument, we may find, for any $\varepsilon>0$, a constant $C=C(\Omega,\varepsilon,\alpha,\beta,\gamma,\sigma)$ such that
\begin{equation}
\label{traceinequality-collection}
	|\alpha| \|\nabla u\|_{2}^2 + |\beta| \left\|\frac{\partial u}{\partial\nu}\right\|_{2,\partial\Omega}^2 +
	|\gamma| \|u\|_{2,\partial\Omega}^2 \leq \varepsilon \left[(1-\sigma)\|D^2u\|^2_{2} + \sigma 	
	\|\Delta u\|^2_{2}\right] + C\|u\|_{2}^2
\end{equation}
for all $u \in H^2(\Omega)$, where $\|v\|_{2,\partial\Omega} := \|v\|_{L^2(\partial\Omega)}$. Choosing $\varepsilon \in (0,1)$ and using that \eqref{h2equivnorm} defines an equivalent norm on $H^2(\Omega)$ yields \eqref{form-l2-ellipticity} for some $\omega \geq 0$ sufficiently large.

An entirely analogous argument shows that if $\gamma = +\infty$, then \eqref{traceinequality-collection} still holds for all $u \in H^2 (\Omega) \cap H^1_0 (\Omega)$ (where the product $\gamma u^2$ on $\partial\Omega$ is interpreted as being identically zero, consistent with the form), and hence \eqref{form-l2-ellipticity}; the cases $\beta = +\infty$ and $\beta = \gamma = +\infty$ are, likewise, completely analogous.
\end{proof}

It follows (see \cite[Section~VI.2]{kato76}) that the associated operator on $L^2(\Omega)$, which we will call the \emph{Robin Bilaplacian} in the case $\beta,\gamma \in\mathbb R$, is self-adjoint and bounded from below, and it has compact resolvent due to the compactness of the embedding $H^2(\Omega) \hookrightarrow L^2(\Omega)$; therefore its spectrum consists of a divergent sequence of eigenvalues
\begin{equation*}
	\lambda_1 \leq \lambda_2 \leq \lambda_3 \leq \ldots \to +\infty,
\end{equation*}
where we repeat each according to its finite multiplicity. Unlike in the case of the Laplacian, there is no reason to expect the first eigenvalue to be simple or to have an associated positive eigenfunction (see \cite{buopar,ggs} and the references therein).  Moreover, as we shall see, if one or more parameters are negative, a finite number of eigenvalues may be negative (cf.\ Theorem~\ref{thm:steklovbuckling}). In any case, the eigenfunctions $u_k$ corresponding to $\lambda_k$ may be chosen in such a way that they form an orthonormal basis of $L^2 (\Omega)$. The eigenvalues may be characterised by the usual Courant--Fischer min-max principle
\begin{equation}
\label{minmax}
\lambda_k(\Omega,\sigma,\alpha,\beta,\gamma)
=\min_{\substack{V\subset H^2(\Omega) \\ \dim V=k}}\  \max_{0\neq u \in V}
\frac{\mathcal Q_{\Omega,\sigma,\alpha,\beta,\gamma}(u,u)}
{\int_{\Omega} u^2 \,dx}.
\end{equation}
In what follows, in order to simplify the notation, we will drop any or all of the arguments of $\lambda_k(\Omega,\sigma,\alpha,\beta,\gamma)$ and indices of $\mathcal Q_{\Omega,\sigma,\alpha,\beta,\gamma}$ whenever they are clear from the context.


In particular, the eigenvalue problem corresponding to \eqref{robin1weak} has the following weak formulation
\begin{equation}
\label{robinweak}
\mathcal Q_{\Omega,\sigma,\alpha,\beta,\gamma}(u, v )
=\lambda\int_\Omega u v  \,dx,
\end{equation}
for any function $ v \in H^2(\Omega)$, and its strong formulation reads
\begin{equation}
\label{robinstrong}
\begin{cases}
\Delta^2u-\alpha\Delta u=\lambda u, & \text{in\ }\Omega,\\
(1-\sigma)\frac{\partial^2u}{\partial\nu^2}+\sigma\Delta u=-\beta\frac{\partial u}{\partial\nu}, & \text{on\ }\partial\Omega,\\
\alpha\frac{\partial u}{\partial\nu}-\frac{\partial\Delta u}{\partial\nu}-(1-\sigma){\divergence}_{\partial\Omega}\frac{\partial\ }{\partial\nu}\nabla_{\partial\Omega} u=-\gamma u, & \text{on\ }\partial\Omega,
\end{cases}
\end{equation}
with completely analogous statements for the limit problems corresponding to \eqref{navier-robin}, \eqref{kuttler-sigillito} and \eqref{dirichlet}. Similarly to problem \eqref{robin1strong}, in the two-dimensional case $d=2$ problem \eqref{robinstrong} models the vibrations of a three-dimensional thin plate of cross-section $\Omega$, where the eigenvalues $\lambda_k$ are the eigenfrequencies and the associated eigenfunctions $u_k$ are the eigenmodes (we refer back to the Introduction for a physical interpretation of the parameters).

\begin{theorem}
\label{lem:continuity-monotonicity}
Fix $\sigma \in (-\frac{1}{d-1},1)$ and $k\in\mathbb N$. For each of the variables $\alpha,\beta,\gamma \in \R$, $\lambda_k$ is a piecewise analytic (in particular continuous) and monotonically increasing function of that variable, if the other variables are held constant. 

In addition, both the resolvent and the operator are continuous with respect to the triple $(\alpha,\beta,\gamma) \in \R^3$.
\end{theorem}

\begin{proof}
Monotonicity is an immediate consequence of the monotonicity of the form $Q_{\Omega,\sigma,\alpha,\beta,\gamma} (u,u)$ in $\alpha$, $\beta$ and $\gamma$ for each fixed $u \in H^2 (\Omega)$, together with the characterisation \eqref{minmax}.

For the piecewise analyticity of the eigencurves we use Kato's theory of analytic perturbation of operators \cite{kato76}. We give the argument for variable $\alpha$ and fixed $\beta,\gamma$; the other cases are completely analogous. First note that for each fixed $u \in H^2(\Omega)$, the mapping $\alpha \mapsto Q_{\Omega,\sigma,\alpha,\beta,\gamma} (u,u)$ is linear, and hence analytic, in $\alpha \in \R$. Together with Theorem~\ref{lem:robin-form}, this means that for $\alpha \in \R$ the family of Robin operators is holomorphic of type (B), and self-adjoint holomorphic, in the sense of Kato; see Section~VII.4 and in particular Theorem~VII.4.2 and Remark~VII.4.7 of \cite{kato76}. The claims about the eigencurves now follow from \cite[Theorem~VII.1.8 and Section~VII.3.1]{kato76}.

The continuity of the resolvent and of the operator follows directly from the fact that the family of Robin operators is holomorphic of type (B), and self-adjoint holomorphic, with respect to any of the variables.
\end{proof}

\begin{remark}
We observe that the $\lambda_k$ satisfy a local joint analyticity property, in the following sense: for any point $(\alpha_0,\beta_0,\gamma_0) \in \R^3$ where $\lambda_k$ is separately analytic in a neighbourhood of each variable, then it is analytic with respect to the triple $(\alpha,\beta,\gamma) \in \R^3$ in a neighbourhood of $(\alpha_0,\beta_0,\gamma_0) \in \R^3$. This is an immediate consequence of Hartogs' Theorem on separate holomorphy (see, e.g., \cite{krantz}) applied to the natural complex extension of the parameters and, correspondingly, of the operator.
\end{remark}

\begin{remark}
A natural question at this point is whether problems~\eqref{robin1strong}--\eqref{dirichlet} admit semiclassical asymptotic expansions as $k\to+\infty$ and under which conditions. For any fixed admissible value of the parameters, setting $\omega_d$ the Lebesgue measure of the unit ball, the classical Weyl limit
\begin{equation}
\label{eq:weyl-limit}
\lambda_k(\Omega,\sigma,\alpha,\beta,\gamma)=(2\pi)^4\left(\frac{k}{\omega_d|\Omega|}\right)^{\frac 4 d}+o\left(k^{\frac 4 d}\right)
\end{equation}
can be easily inferred from the analogous result for the Dirichlet problem~\eqref{dirichlet} and the Neumann problem (i.e., problem~\eqref{robin1strong} with $\beta=\gamma=0$) computed e.g., in~\cite{buprostu}. In particular, as is well known, the asymptotic expansion~\eqref{eq:weyl-limit} only depends on the principal part of the operator, so that different types of boundary conditions do not affect such an expansion (see e.g., \cite{lapflec,safvas}). Further terms in the expansion~\eqref{eq:weyl-limit} can be computed, and are well known to depend on the principal part of the operators appearing in the boundary conditions, see \cite[Chapter 1]{safvas}, meaning that in fact they do not depend on either $\beta$ or $\gamma$ (as long as they are finite). However, for the second term in the asymptotic expansion to make sense one needs much stronger regularity hypotheses on the domain $\Omega$ that are usually given in terms of billiard trajectories. Notice that the cases when $\beta$ and/or $\gamma$ vanish (if not equal to plus infinity) have been treated in \cite[Section 3]{buprostu}, and such expansions can be directly applied to problems~\eqref{robin1strong}--\eqref{dirichlet}, when the regularity assumptions on $\Omega$ are satisfied (see \cite{buprostu,safvas} for more details on assumptions and computations).
\end{remark}


\section{Convergence for large positive values of the Robin parameters}
\label{sec:large-positive}

In this section we prove convergence of the eigenvalues $\lambda_k$ to the eigenvalues of the Bilaplacian with Navier--Robin \eqref{navier-robin}, Kuttler--Sigillito \eqref{kuttler-sigillito}, and Dirichlet \eqref{dirichlet} boundary conditions as $\gamma \to +\infty$, $\beta \to +\infty$, and $\beta,\gamma \to +\infty$, respectively, if $\sigma$ and $\alpha$ are fixed. We will also explicitly cover the special cases in which one of the parameters is already fixed at plus infinity, that is, the case of NR to Dirichlet, where $\gamma=+\infty$ and $\beta\to+\infty$, and the case of KS to Dirichlet, where $\beta=+\infty$ and $\gamma\to+\infty$. Throughout, it will always be understood that $\frac 1 \infty=0$.

In order to simplify notation we will suppress the parameters $\sigma$ and $\alpha$; the respective forms are then $\mathcal{Q}_{\beta,+\infty}$, defined on $H^2(\Omega) \cap H^1_0 (\Omega)$, $\mathcal{Q}_{+\infty,\gamma}$, on $\{u \in H^2(\Omega): \frac{\partial u}{\partial \nu} = 0\}$, and $\mathcal{Q}_{+\infty,+\infty}$, on $H^2_0 (\Omega)$. 
Likewise, we will write $\lambda_k (\beta,+\infty)$, $\lambda_k (+\infty,\gamma)$ and $\lambda_k(+\infty,+\infty)$, respectively, for the corresponding eigenvalues. The corresponding eigenfunctions, chosen to form an orthonormal basis of $L^2(\Omega)$, will be denoted by $u_{k,\beta,\gamma}$, where $\beta$ and $\gamma$ may take on the value $+\infty$. Also, assuming without loss of generality that $-1$ is not an eigenvalue (if it is, we may simply shift the operator by a multiple of the identity, or equivalently by adding an $L^2$-term to the form), for fixed $\sigma$ and $\alpha$ we will write
\begin{displaymath}
	\Res{\beta}{\gamma}
\end{displaymath}
for the resolvent operator at $-1$, that is, for $f \in L^2(\Omega)$, $u = \Res{\beta}{\gamma}f$ is the unique solution of $\Delta^2u - \alpha\Delta u + u =f$ subject to the boundary conditions $\Beta (u) = \beta\frac{\partial u}{\partial\nu}$, $\Gamma (u) = \gamma u$; its eigenvalues will be denoted by
\begin{equation}
\label{eq:lambda-mu}
	\mu_k (\beta,\gamma) = \frac{1}{\lambda_k (\beta,\gamma)+1},
\end{equation}
where, again, $\beta$ and $\gamma$ may take on the value $+\infty$. Our main convergence result is as follows. The cases of NR to Dirichlet as $\beta\to +\infty$ and KS to Dirichlet as $\gamma \to +\infty$ are contained in parts (a) and (b), respectively.

\begin{theorem}
\label{thm:positive-convergence-1}
Suppose $\Omega \subset \R^d$ is a Lipschitz domain and $\sigma \in (-\frac{1}{d-1},1)$ and $\alpha \in \R$ are fixed. Then $\lambda_k(\beta,\gamma)$ is jointly monotonically increasing for $(\beta,\gamma) \in (-\infty,+\infty] \times (-\infty,+\infty]$, and
\begin{enumerate}
\item[(a)] for each fixed $\beta \in (-\infty,+\infty]$, for every $k\geq 1$ we have $\lambda_k (\beta,\gamma) \to \lambda_k (\beta,+\infty)$ as $\gamma \to +\infty$; moreover, there exists an eigenfunction $u_{k,\beta,+\infty}$ for $\lambda_k (\beta, +\infty)$ such that up to a subsequence, $u_{k,\beta,\gamma} \rightharpoonup u_{k,\beta,+\infty}$ weakly in $H^2(\Omega)$;
\item[(b)] for each fixed $\gamma \in (-\infty,+\infty]$, for every $k\geq 1$ we have $\lambda_k (\beta,\gamma) \to \lambda_k (+\infty,\gamma)$ as $\beta \to +\infty$; moreover, there exists an eigenfunction $u_{k,+\infty,\gamma}$ for $\lambda_k (+\infty,\gamma)$ such that up to a subsequence, $u_{k,\beta,\gamma} \rightharpoonup u_{k,+\infty,\gamma}$ weakly in $H^2(\Omega)$;
\item[(c)] if $\beta,\gamma \to +\infty$ jointly, then for every $k\geq 1$ we have $\lambda_k (\beta,\gamma) \to \lambda_k (+\infty,+\infty)$; moreover, there exists an eigenfunction $u_{k,+\infty,+\infty}$ for $\lambda_k (+\infty,+\infty)$ such that up to a subsequence, $u_{k,\beta,\gamma} \rightharpoonup u_{k,+\infty,+\infty}$ weakly in $H^2(\Omega)$.
\end{enumerate}
\end{theorem}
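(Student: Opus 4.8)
The plan is a monotone form-convergence argument carried out directly on the eigenfunctions, so as to obtain the weak $H^2$-convergence at the same time as the eigenvalue convergence. I describe case~(a); cases~(b) and~(c) are entirely analogous upon replacing the limit form domain $H^2(\Omega)\cap H^1_0(\Omega)$ by $\{u\in H^2(\Omega):\partial u/\partial\nu=0\}$, respectively $H^2_0(\Omega)$, and the vanishing boundary term $\gamma\int_{\partial\Omega}uv$ by $\beta\int_{\partial\Omega}\frac{\partial u}{\partial\nu}\frac{\partial v}{\partial\nu}$, respectively by both. Joint monotonicity of $(\beta,\gamma)\mapsto\lambda_k(\beta,\gamma)$ on $(-\infty,+\infty]^2$ follows at once from the min-max formula~\eqref{minmax}, the pointwise monotonicity of $u\mapsto\mathcal{Q}_{\beta,\gamma}(u,u)$, and the fact that raising a parameter to $+\infty$ only shrinks the set of admissible subspaces, on which (under the convention $0\cdot(+\infty)=0$) the associated boundary term vanishes identically. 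Restricting the min-max for $\lambda_k(\beta,\gamma)$ to $k$-dimensional subspaces of $H^2(\Omega)\cap H^1_0(\Omega)$, where the $\gamma$-term drops out and $\mathcal{Q}_{\beta,\gamma}$ agrees with $\mathcal{Q}_{\beta,+\infty}$, gives $\lambda_k(\beta,\gamma)\le\lambda_k(\beta,+\infty)$ for every $\gamma\in\R$; hence the limit $\Lambda_k:=\lim_{\gamma\to+\infty}\lambda_k(\beta,\gamma)$ exists and is $\le\lambda_k(\beta,+\infty)$, and it remains only to prove the reverse inequality.

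For that one needs uniform control of the $L^2$-normalised eigenfunctions $u_{k,\beta,\gamma}$. The key point here is that the ellipticity constants of Theorem~\ref{lem:robin-form} degenerate as $\gamma\to+\infty$ and so cannot be used as they stand; instead, for $\gamma\ge0$ one has $\mathcal{Q}_{\beta,\gamma}(u,u)\ge\mathcal{Q}_{\beta,0}(u,u)$, and $\mathcal{Q}_{\beta,0}$ is $L^2$-elliptic with constants $\omega_0,c_0>0$ independent of $\gamma$. Combined with $\mathcal{Q}_{\beta,\gamma}(u_{k,\beta,\gamma},u_{k,\beta,\gamma})=\lambda_k(\beta,\gamma)\le\lambda_k(\beta,+\infty)$, this bounds $\|u_{k,\beta,\gamma}\|_{H^2(\Omega)}$ uniformly in $\gamma\ge0$, and moreover
\[
\gamma\int_{\partial\Omega}u_{k,\beta,\gamma}^2\,d\mathcal{H}^{d-1}=\lambda_k(\beta,\gamma)-\mathcal{Q}_{\beta,0}(u_{k,\beta,\gamma},u_{k,\beta,\gamma})\le\lambda_k(\beta,+\infty)+\omega_0,
\]
so that $\|u_{k,\beta,\gamma}\|_{L^2(\partial\Omega)}^2=O(1/\gamma)\to0$.

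Next I fix a sequence $\gamma_n\to+\infty$ and, using the uniform $H^2$-bound and a diagonal extraction over $k$, pass to a subsequence along which $u_{k,\beta,\gamma_n}\rightharpoonup\tilde u_k$ weakly in $H^2(\Omega)$ for every $k$. By compactness of $H^2(\Omega)\hookrightarrow L^2(\Omega)$ the convergence is strong in $L^2(\Omega)$, so $\{\tilde u_k\}$ is an orthonormal system in $L^2(\Omega)$; by compactness of the trace map $H^2(\Omega)\to L^2(\partial\Omega)$ together with the bound above, $\tilde u_k$ has zero trace, i.e.\ $\tilde u_k\in H^2(\Omega)\cap H^1_0(\Omega)$. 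Passing to the limit in the weak eigenvalue identity for $u_{k,\beta,\gamma_n}$ tested against an arbitrary fixed $v\in H^2(\Omega)\cap H^1_0(\Omega)$ — for which $\mathcal{Q}_{\beta,\gamma_n}(\cdot,v)=\mathcal{Q}_{\beta,0}(\cdot,v)$, a bounded linear functional on $H^2(\Omega)$, since the $\gamma_n$-term disappears — yields $\mathcal{Q}_{\beta,+\infty}(\tilde u_k,v)=\Lambda_k\int_\Omega\tilde u_kv\,dx$ for all such $v$. Thus each $\tilde u_k$ is an eigenfunction of the Navier--Robin problem~\eqref{navier-robin} with eigenvalue $\Lambda_k$, and in particular $\mathcal{Q}_{\beta,+\infty}(\tilde u_i,\tilde u_j)=\Lambda_i\delta_{ij}$.

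Finally, for each $N$ the span of $\tilde u_1,\dots,\tilde u_N$ is an $N$-dimensional subspace of $H^2(\Omega)\cap H^1_0(\Omega)$ on which the Rayleigh quotient of $\mathcal{Q}_{\beta,+\infty}$ equals $\sum_i\Lambda_ic_i^2/\sum_ic_i^2\le\Lambda_N$ (using $\Lambda_i\le\Lambda_N$ for $i\le N$); hence $\lambda_N(\beta,+\infty)\le\Lambda_N$ by~\eqref{minmax}, which together with the earlier inequality gives $\Lambda_N=\lambda_N(\beta,+\infty)$. Since the full limit $\lim_{\gamma\to+\infty}\lambda_N(\beta,\gamma)$ exists by monotonicity, it equals $\lambda_N(\beta,+\infty)$, and the weak $H^2$-convergence of the eigenfunctions is the one produced along the chosen subsequence. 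I expect the main obstacle to be not any single deep step but the uniform bookkeeping of the three different form domains and, above all, obtaining the $H^2$-bound from a parameter-free elliptic form rather than from Theorem~\ref{lem:robin-form}; the observation that makes the passage to the limit work is simply that testing only against functions lying in the limit form domain annihilates the diverging boundary term.
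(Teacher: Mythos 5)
Your proposal is correct, and while it follows the same broad monotone-convergence philosophy as the paper, the mechanism is genuinely different in one respect worth noting. The paper packages the argument into an abstract lemma (Lemma~\ref{lem:mosco-form}) on monotone families of forms, whose hypotheses combine a compactness condition with the weak lower semicontinuity estimate $\mathcal{Q}_\infty(v)\le\liminf_n\mathcal{Q}_n(v_n)$; the lemma is then proved by induction on $k$, identifying each weak limit as an eigenvector through the equality case in the min-max characterisation, and Theorem~\ref{thm:positive-convergence-1} follows by verifying the hypotheses (which requires the equivalent norm \eqref{h2equivnorm}, the compact embedding $H^2(\Omega)\hookrightarrow H^1(\Omega)$ and the compact first-order trace, precisely to get the liminf inequality for the full form). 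You avoid the lower-semicontinuity step altogether: by testing the weak eigenvalue identity only against fixed $v$ in the limit form domain, the divergent boundary term is annihilated and the passage to the limit becomes a purely linear statement about the weakly continuous functional $\mathcal{Q}_{\beta,0}(\cdot,v)$; the limits $\tilde u_k$ are then eigenfunctions of the limit problem by inspection, and you close with the Rayleigh quotient on $\Span\{\tilde u_1,\dots,\tilde u_N\}$ together with the a priori inequality $\Lambda_N\le\lambda_N(\beta,+\infty)$, rather than by induction. Both routes rest on the same three compactness ingredients (the uniform $H^2$-bound via $\mathcal{Q}_{\beta,\gamma}\ge\mathcal{Q}_{\beta,0}$ for $\gamma\ge 0$, the compact trace forcing $\tilde u_k\in H^1_0(\Omega)$, and strong $L^2$-convergence preserving orthonormality and the normalisation $\|\tilde u_k\|_2=1$) and on the min-max inequality $\lambda_k(\beta,\gamma)\le\lambda_k(\beta,+\infty)$. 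What the paper's formulation buys is reusability — the abstract lemma applies verbatim to the other two cases and, as the authors remark, to the Robin-to-Dirichlet limit for the Laplacian — whereas your argument is slightly leaner in its analytic prerequisites, needing only continuity of the bilinear form and not its weak lower semicontinuity; the small bookkeeping points you would still need to spell out (that the same reasoning covers $\beta=+\infty$ within case (a), and the diagonal extraction over $k$) are routine and correctly anticipated in your write-up.
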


\begin{theorem}
\label{thm:positive-convergence-2}
Suppose in addition to the assumptions of Theorem~\ref{thm:positive-convergence-1} that $\partial\Omega$ is $C^4$. Then in cases (a) and (b) of Theorem~\ref{thm:positive-convergence-1} we also have convergence of the corresponding resolvents $\Res{\beta}{\gamma}$ in the operator norm, with respective estimates
\begin{displaymath}
\begin{aligned}
	\|\Res{\beta}{\gamma} - \Res{\beta}{+\infty}\| &\leq \frac{C_1(\Omega,\sigma,\alpha,\beta)}{\sqrt{\gamma}} \qquad &&\text{for all sufficiently large $\gamma$, in case (a),}\\ 
	\|\Res{\beta}{\gamma} - \Res{+\infty}{\gamma}\| &\leq \frac{C_2(\Omega,\sigma,\alpha,\gamma)}{\sqrt{\beta}}  \qquad &&\text{for all sufficiently large $\beta$, in case (b),}
\end{aligned}
\end{displaymath}
for constants $C_1(\Omega,\sigma,\alpha,\beta),C_2(\Omega,\sigma,\alpha,\gamma)>0$ depending only on the indicated parameters. In particular, there exist $\gamma_0 > 0$ in case (a) and $\beta_0 > 0$ in case (b) such that the eigenvalues satisfy the respective bounds
\begin{equation}
\label{eq:positive-ev-bound-1}
	0 \leq \lambda_k(\beta,+\infty) - \lambda_k(\beta,\gamma) \leq \frac{C_1(\Omega,\sigma,\alpha,\beta)\lambda_k(\beta,+\infty)^2}{\sqrt{\gamma}}
\end{equation}
for all $\gamma\geq\gamma_0$ and all $k \in \N$ in case (a), and
\begin{equation}
\label{eq:positive-ev-bound-2}
	0 \leq \lambda_k(+\infty,\gamma) - \lambda_k(\beta,\gamma) \leq \frac{C_2(\Omega,\sigma,\alpha,\gamma) \lambda_k(+\infty,\gamma)^2}{\sqrt{\beta}}
\end{equation}
for all $\beta\geq\beta_0$ and all $k \in \N$ in case (b).
\end{theorem}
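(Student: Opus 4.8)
The plan is to establish the operator-norm resolvent convergence first, since the eigenvalue bounds then follow from standard perturbation theory for self-adjoint operators with compact resolvent. Fix $\sigma$ and $\alpha$ and work with the resolvents $\Res{\beta}{\gamma}$ at the point $-1$, whose eigenvalues are the $\mu_k(\beta,\gamma)$ of \eqref{eq:lambda-mu}. I would treat case (a) in detail (case (b) being symmetric): fix $\beta \in (-\infty,+\infty]$, take $f \in L^2(\Omega)$ with $\|f\|_2 = 1$, set $u_\gamma = \Res{\beta}{\gamma}f$ and $u_\infty = \Res{\beta}{+\infty}f$, and estimate $\|u_\gamma - u_\infty\|_2$. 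From the form definition, $u_\gamma \in H^2(\Omega)$ satisfies $\mathcal{Q}_{\beta,\gamma}(u_\gamma,v) + \langle u_\gamma, v\rangle_2 = \langle f,v\rangle_2$ for all $v \in H^2(\Omega)$, while $u_\infty \in H^2(\Omega)\cap H^1_0(\Omega)$ satisfies the same identity with $\gamma = +\infty$ for test functions $v \in H^2(\Omega)\cap H^1_0(\Omega)$. A first, soft step — essentially a rerun of the Ehrling argument in the proof of Theorem~\ref{lem:robin-form} together with \eqref{traceinequality-collection} — gives a uniform bound $\|u_\gamma\|_{H^2(\Omega)} \le C$ for $\gamma$ large, and, crucially, testing the equation for $u_\gamma$ against $u_\gamma$ itself shows $\gamma \|u_\gamma\|_{2,\partial\Omega}^2 \le C$, so that $\|u_\gamma\|_{2,\partial\Omega}^2 \le C/\gamma$; this is where the rate $\gamma^{-1/2}$ for the trace enters.

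The heart of the argument is to convert this boundary decay into an interior estimate at the rate $\gamma^{-1/2}$, and this is where the $C^4$-regularity of $\partial\Omega$ is used. Here I would introduce a correction term: since $u_\gamma$ does not vanish on $\partial\Omega$ but its trace is small in $L^2(\partial\Omega)$, I would pick $w_\gamma \in H^2(\Omega)$ with $w_\gamma|_{\partial\Omega} = u_\gamma|_{\partial\Omega}$, $\frac{\partial w_\gamma}{\partial\nu} = 0$ on $\partial\Omega$, and $\|w_\gamma\|_{H^2(\Omega)} \le C\bigl(\|u_\gamma|_{\partial\Omega}\|_{H^{3/2}(\partial\Omega)} + \|0\|_{H^{1/2}(\partial\Omega)}\bigr)$ by the trace lifting (bounded right inverse of the trace map $H^2(\Omega) \to H^{3/2}(\partial\Omega)\times H^{1/2}(\partial\Omega)$, which needs $\partial\Omega \in C^{1,1}$; the extra $C^4$ smoothness I would use to gain the quantitative control of the $H^{3/2}(\partial\Omega)$-norm of the trace). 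The subtle point is that a priori one only controls $\|u_\gamma|_{\partial\Omega}\|_{L^2(\partial\Omega)} \le C\gamma^{-1/2}$, not its $H^{3/2}$-norm; I would bootstrap using elliptic regularity for the fourth-order boundary value problem, $\|u_\gamma\|_{H^4(\Omega)} \le C$ uniformly in $\gamma$ (this requires $\partial\Omega \in C^4$ and careful tracking that the Robin boundary data $-\beta\frac{\partial u_\gamma}{\partial\nu}$ and $-\gamma u_\gamma$ sit in the right trace spaces with constants independent of $\gamma$ once the $\gamma\|u_\gamma\|^2_{2,\partial\Omega}$ bound is in hand), so that interpolation gives $\|u_\gamma|_{\partial\Omega}\|_{H^{3/2}(\partial\Omega)} \le C\|u_\gamma\|^{\theta}_{H^{7/2}(\partial\Omega)-\text{type}}\|u_\gamma|_{\partial\Omega}\|^{1-\theta}_{L^2(\partial\Omega)} \le C\gamma^{-(1-\theta)/2}$; one then has to check the exponent indeed yields at least $\gamma^{-1/2}$ after optimising, adjusting the regularity order as needed. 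With $w_\gamma$ in hand, $u_\gamma - w_\gamma \in H^2(\Omega)\cap H^1_0(\Omega)$ is an admissible test function for the $u_\infty$ problem, and subtracting the two weak formulations, testing against $u_\gamma - w_\gamma - u_\infty$, and using coercivity of $\mathcal{Q}_{\beta,+\infty}(\cdot,\cdot) + \|\cdot\|_2^2$ on $H^2(\Omega)\cap H^1_0(\Omega)$ yields $\|u_\gamma - u_\infty\|_{H^2(\Omega)} \le C(\|w_\gamma\|_{H^2(\Omega)} + \text{boundary error terms}) \le C_1(\Omega,\sigma,\alpha,\beta)\gamma^{-1/2}$. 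Taking the supremum over $\|f\|_2 = 1$ gives the resolvent estimate.

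Finally, the eigenvalue bound \eqref{eq:positive-ev-bound-1} follows from the resolvent estimate by the standard spectral-perturbation inequality $|\mu_k(\beta,\gamma) - \mu_k(\beta,+\infty)| \le \|\Res{\beta}{\gamma} - \Res{\beta}{+\infty}\|$ for the $k$-th eigenvalues of self-adjoint operators (both bounded, self-adjoint, compact, ordered decreasingly), combined with the relation $\lambda_k + 1 = 1/\mu_k$: a short computation turns $|\mu_k(\beta,\gamma) - \mu_k(\beta,+\infty)| \le C_1\gamma^{-1/2}$ into $0 \le \lambda_k(\beta,+\infty) - \lambda_k(\beta,\gamma) \le C_1(\lambda_k(\beta,+\infty)+1)(\lambda_k(\beta,\gamma)+1)\gamma^{-1/2}$, and since $\lambda_k(\beta,\gamma) \le \lambda_k(\beta,+\infty)$ by the monotonicity already recorded, one absorbs this into $C_1\lambda_k(\beta,+\infty)^2\gamma^{-1/2}$ after enlarging $C_1$ and choosing $\gamma_0$ large enough that $\lambda_k(\beta,+\infty)+1 \le 2\lambda_k(\beta,+\infty)$ for the relevant range (or simply absorbing the lower-order terms, valid for all $k$ with a $k$-independent constant since the correction is controlled by the square). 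Case (b) is identical with the roles of $\beta$ and $\gamma$, and of $H^2(\Omega)\cap H^1_0(\Omega)$ and $\{u \in H^2(\Omega): \frac{\partial u}{\partial\nu}=0\}$, interchanged; the trace lifting there fixes $\frac{\partial w_\beta}{\partial\nu} = \frac{\partial u_\beta}{\partial\nu}$ and $w_\beta|_{\partial\Omega}=0$ instead. I expect the main obstacle to be the uniform-in-$\gamma$ elliptic regularity estimate for the fourth-order Robin problem and the attendant interpolation bookkeeping needed to extract the sharp $\gamma^{-1/2}$ rate (rather than merely $o(1)$) for the trace of $u_\gamma$ in the $H^{3/2}$-norm; everything else is bilinear-form manipulation and standard perturbation theory.
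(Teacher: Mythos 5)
There is a genuine gap in your argument, and it sits exactly at the point you flag as the main obstacle. First, the interpolation step cannot recover the rate $\gamma^{-1/2}$: what you control is $\|u_\gamma\|_{L^2(\partial\Omega)} \le C\gamma^{-1/2}$ together with (at best) a $\gamma$-uniform bound on a higher-order trace norm, say $H^{s}(\partial\Omega)$ with $s\ge 3/2$. Interpolating then gives $\|u_\gamma\|_{H^{3/2}(\partial\Omega)} \le C\gamma^{-(1-\theta)/2}$ with $\theta = \tfrac{3/2}{s}>0$, which is \emph{strictly} weaker than $\gamma^{-1/2}$ no matter how large $s$ is; since your final estimate is driven by $\|w_\gamma\|_{H^2(\Omega)}\sim \|u_\gamma\|_{H^{3/2}(\partial\Omega)}$, the lifting-plus-interpolation route can only produce a suboptimal power of $\gamma$, not the stated bound. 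Second, the ingredient you invoke to make even that work, a $\gamma$-\emph{uniform} estimate $\|u_\gamma\|_{H^4(\Omega)}\le C$, is precisely the kind of quantitative higher-order elliptic estimate that is not available: the standard regularity result (\cite[Theorem 2.20]{ggs}) gives constants depending on the coefficients of the boundary operators, hence on $\gamma$, in an uncontrolled way, and the paper itself points out (when explaining why the joint case $\beta,\gamma\to+\infty$ is left open) that coefficient-explicit versions of such estimates do not currently exist in the literature. So both halves of your bootstrap are unjustified, and one of them is structurally unable to yield $\gamma^{-1/2}$.

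The paper's proof avoids both problems by never estimating $u_\gamma$ itself in high norms. Writing $w = u_\gamma - \tilde u$ with $\tilde u = \Res{\beta}{+\infty}h$, one has $\Delta^2 w - \alpha\Delta w + w = 0$ in $\Omega$, and, because $\tilde u$ has zero trace while $u_\gamma$ satisfies the Robin condition, the boundary identity $w = -\tfrac{1}{\gamma}\bigl(\Gamma(\tilde u)+\Gamma(w)\bigr)$ holds on $\partial\Omega$. Multiplying the equation by $w$ and integrating by parts, the term $\tfrac1\gamma \Gamma(w)^2$ appears with a favourable sign and absorbs half of the cross term $\tfrac1\gamma\Gamma(w)\Gamma(\tilde u)$ via Young's inequality, giving directly $\|w\|_2 \le (2\gamma)^{-1/2}\|\Gamma(\tilde u)\|_{2,\partial\Omega}$. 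Elliptic regularity is then needed only for the \emph{fixed limit problem}, $\|\tilde u\|_{H^4(\Omega)} \le C(\Omega,\sigma,\alpha,\beta)\|h\|_2$, where no $\gamma$ appears at all, and the sharp $\gamma^{-1/2}$ rate comes out without any trace lifting or interpolation. Your reduction of the eigenvalue bounds \eqref{eq:positive-ev-bound-1}--\eqref{eq:positive-ev-bound-2} to the resolvent estimate, via $|\mu_k(\beta,\gamma)-\mu_k(\beta,+\infty)|\le\|\Res{\beta}{\gamma}-\Res{\beta}{+\infty}\|$, the relation \eqref{eq:lambda-mu} and monotonicity, is fine and coincides with the paper's; it is the resolvent estimate itself that your scheme does not deliver.
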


The monotonicity of the eigenvalues with respect to the parameters $\beta$ and $\gamma$ contained in Theorem~\ref{thm:positive-convergence-1} is an extension of Theorem~\ref{lem:continuity-monotonicity}, where only the finite case was considered.
In particular, if either $\beta$ or $\gamma$ equals $+\infty$, then the statement remains true for the respective other parameter; while the form domains are always nested in the right way. 

The proof of Theorem~\ref{thm:positive-convergence-1} is based on a general argument about convergent forms, which also applies equally to the convergence of the eigenvalues of the Robin Laplacian to those of the Dirichlet Laplacian, and which is somehow reminiscent of an abstract version of Mosco convergence; for convenience of reference we will formulate this in abstract terms. We observe that Theorem~\ref{thm:positive-convergence-1} may be viewed as complementary to \cite[Theorems 3.7 and 3.12]{lapro}, which are based on similar ideas.

On the other hand, the proof of Theorem~\ref{thm:positive-convergence-2} relies on a completely different method inspired by \cite{filinovskiy14}. Regarding case (c) of Theorem~\ref{thm:positive-convergence-1}, we conjecture that an inequality of the type
$$
\|\Res{\beta}{\gamma} - \Res{+\infty}{+\infty}\| \leq C_3(\Omega,\sigma,\alpha)\left(\frac 1{\sqrt{\beta}}+\frac{1}{\sqrt{\gamma}}\right)
$$
holds, analogous to the inequalities in Theorem~\ref{thm:positive-convergence-2}. In principle, this may be obtained in exactly the same way as (a) and (b); however, it requires regularity estimates for elliptic partial differential equations of higher order where the dependence of the constants on the various coefficients can be controlled explicitly in the right way. Such results do not seem to be currently available in the literature (see \cite[Theorem 2.20]{ggs} for the statements without explicit control on the constants), and it would be too large an undertaking to derive a refined version of that theorem here. Note also that the constants $C_1$, $C_2$ appearing in the estimates depend on various trace and embedding estimates (cf.\ \eqref{eq:domain-estimate-1} and \eqref{eq:h4-control}), and thus it would be difficult to control them explicitly.

It should be possible to refine and extend Theorem~\ref{thm:positive-convergence-2} in several other different directions, as well. Since our principal goal is to give a broad outline of what to expect, we will restrict ourselves here to the simpler, more regular case, and leave refinements as open problems:

\begin{problem}
Determine the optimal form of the bounds \eqref{eq:positive-ev-bound-1} and \eqref{eq:positive-ev-bound-2}, including the optimal powers of $\beta$ and $\gamma$. Obtain an explicit estimate on the constants $C_1$ and $C_2$.
\end{problem}

\begin{problem}
Prove that Theorem~\ref{thm:positive-convergence-2} still holds when $\Omega$ has corners, or more generally is merely Lipschitz. Note that this problem appears to be open also for the Robin Laplacian (the papers \cite{filinovskiy14,filinovskiy15,filinovskiy15b} all deal with the smooth case). We remark that the technique used to deal with the smooth case relies on regularity estimates for the solution; it is not clear whether such a strategy could be successful in general.
\end{problem}

\begin{problem}
In the smooth case, obtain a precise asymptotic expansion for $\lambda_k(\beta,\gamma)$ in terms of $\lambda_k (\beta,+\infty)$ and powers of $\gamma$ as $\gamma \to +\infty$, where $\beta$ is fixed. Do the same when $\beta \to +\infty$, for fixed $\gamma$. (Cf., e.g., \cite[Theorem~1]{filinovskiy15b} for the Robin Laplacian.)
\end{problem}

\begin{problem}
Study the case when $\beta, \gamma \to +\infty$ simultaneously.
\end{problem}

\begin{lemma}
\label{lem:mosco-form}
Suppose $H,V,V_1,V_2,\ldots,V_\infty$ are Hilbert spaces satisfying the chain of continuous embeddings
\begin{equation}
\label{eq:embedding-chain}
	V_\infty \hookrightarrow \ldots \hookrightarrow V_2 \hookrightarrow V_1 \hookrightarrow V \hookrightarrow H,
\end{equation}
where the last embedding is compact. Suppose also that the symmetric sesquilinear form $\mathcal{Q}_n : V_n \times V_n \to \C$ is continuous and $H$-elliptic, $n=1,2,\ldots,\infty$, and that
\begin{enumerate}
\item $\mathcal{Q}_n (v) \leq \mathcal{Q}_{n+1} (v)$ for all $v \in V_{n+1}$ and all $n\in \N$, as well as $\mathcal{Q}_n (v) \leq \mathcal{Q}_\infty (v)$ for all $v \in V_\infty$ and $n \in \N$; and
\item whenever $v_n \in V_n$ with $\|v_n\|_H=1$ and the sequence $(\mathcal{Q}_n(v_n))_{n\in\N}$ is bounded from above, the sequence $(v_n)_{n\in\N}$ in $V$ admits a subsequence with a weak limit in $V_\infty$, and in this case $\mathcal{Q}_\infty (v) \leq \liminf_{n\to\infty} \mathcal{Q}_n (v_n)$.
\end{enumerate}
Denote by $\lambda_k (\mathcal{Q}_n)$ the $k$-th eigenvalue of the form $\mathcal{Q}_n$, $n=1,\ldots,\infty$, counted with multiplicities. Then:f
\begin{enumerate}
\item[(a)] $\lambda_k (\mathcal{Q}_n) \to \lambda_k (\mathcal{Q}_\infty)$ from below, for all $k=1,2,\ldots$;
\item[(b)] denoting by $u_{k,n}$ any eigenvector associated with $\lambda_k (\mathcal{Q}_n)$, normalised so that $\|u_{k,n}\|_H=1$, there exists an eigenvector $u_{k,\infty}$ for $\lambda_k (\mathcal{Q}_\infty)$ with $\|u_{k,\infty}\|_H = 1$ such that, up to a subsequence, $u_{k,n} \rightharpoonup u_{k,\infty}$ weakly in $V$ as $n \to \infty$.
\end{enumerate}
\end{lemma}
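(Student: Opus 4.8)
The plan is to prove Lemma~\ref{lem:mosco-form} via the min-max characterisation of eigenvalues, using the two hypotheses to obtain the two-sided bound on $\lambda_k(\mathcal{Q}_n)$, and a standard compactness/diagonal argument to extract the limiting eigenvector. First I would observe that each $\mathcal{Q}_n$ (including $n=\infty$), being continuous and $H$-elliptic on $V_n$ with $V_n \hookrightarrow H$ compact, has discrete spectrum bounded below, so $\lambda_k(\mathcal{Q}_n)$ is well defined by
\[
\lambda_k(\mathcal{Q}_n) = \min_{\substack{W \subset V_n\\ \dim W = k}} \ \max_{0 \neq v \in W} \frac{\mathcal{Q}_n(v)}{\|v\|_H^2}.
\]

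For the easy inequality, fix $k$ and $n$. Hypothesis (1) together with the embedding chain \eqref{eq:embedding-chain} gives, for any $k$-dimensional subspace $W \subset V_\infty \subset V_n$, that $\mathcal{Q}_n(v) \leq \mathcal{Q}_\infty(v)$ for all $v \in W$, hence $\lambda_k(\mathcal{Q}_n) \leq \lambda_k(\mathcal{Q}_\infty)$; similarly $\lambda_k(\mathcal{Q}_n) \leq \lambda_k(\mathcal{Q}_{n+1})$, so the sequence $(\lambda_k(\mathcal{Q}_n))_n$ is nondecreasing and bounded above by $\lambda_k(\mathcal{Q}_\infty)$. It therefore converges to some limit $\ell_k \leq \lambda_k(\mathcal{Q}_\infty)$; it remains to show $\ell_k \geq \lambda_k(\mathcal{Q}_\infty)$, which I would do by induction on $k$. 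Let $u_{1,n},\ldots,u_{k,n}$ be an $H$-orthonormal system of eigenvectors of $\mathcal{Q}_n$ for the first $k$ eigenvalues. Since $\mathcal{Q}_n(u_{j,n}) = \lambda_j(\mathcal{Q}_n) \leq \ell_j \leq \ell_k$ is bounded above and $\|u_{j,n}\|_H = 1$, hypothesis (2) yields, after passing to a subsequence (and diagonalising over $j=1,\ldots,k$), weak limits $u_{j,n} \rightharpoonup u_{j,\infty}$ in $V$ with $u_{j,\infty} \in V_\infty$, and $\mathcal{Q}_\infty(u_{j,\infty}) \leq \liminf_n \mathcal{Q}_n(u_{j,n}) = \ell_j$. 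By compactness of $V \hookrightarrow H$ the convergence is strong in $H$, so the $u_{j,\infty}$ remain $H$-orthonormal, hence linearly independent; moreover, for $v$ in their span, writing $v = \sum c_j u_{j,\infty}$, a short computation using $\mathcal{Q}_\infty$-orthogonality of the limits (which follows from $\mathcal{Q}_n$-orthogonality of the $u_{j,n}$, passing to the limit in the bilinear form — this needs the weak convergence plus the $\liminf$ inequality applied to suitable combinations, or more simply one bounds $\mathcal{Q}_\infty(v) \le \liminf \mathcal Q_n(\sum c_j u_{j,n}) = \liminf \sum c_j^2 \lambda_j(\mathcal Q_n) \le \ell_k \|v\|_H^2$) gives $\mathcal{Q}_\infty(v) \leq \ell_k \|v\|_H^2$. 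Feeding this $k$-dimensional test space into the min-max for $\mathcal{Q}_\infty$ gives $\lambda_k(\mathcal{Q}_\infty) \leq \ell_k$, completing (a).

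For part (b), apply hypothesis (2) to the eigenvectors $u_{k,n}$ themselves: $\mathcal{Q}_n(u_{k,n}) = \lambda_k(\mathcal{Q}_n) \to \lambda_k(\mathcal{Q}_\infty)$ is bounded, so a subsequence converges weakly in $V$ to some $u_{k,\infty} \in V_\infty$, strongly in $H$ (hence $\|u_{k,\infty}\|_H = 1$), with $\mathcal{Q}_\infty(u_{k,\infty}) \leq \liminf_n \mathcal{Q}_n(u_{k,n}) = \lambda_k(\mathcal{Q}_\infty)$. On the other hand, the min-max (or rather the variational characterisation of $\lambda_k$ restricted to the orthogonal complement of the first $k-1$ eigenspaces) forces $\mathcal{Q}_\infty(u_{k,\infty}) \geq \lambda_k(\mathcal{Q}_\infty)\|u_{k,\infty}\|_H^2$: indeed one extracts simultaneously the weak limits $u_{1,\infty},\ldots,u_{k,\infty}$ as above, which span a $k$-dimensional space on which $\mathcal{Q}_\infty$ is bounded by $\lambda_k(\mathcal{Q}_\infty)$, forcing this space to be spanned by eigenvectors for the first $k$ eigenvalues and $u_{k,\infty}$ in particular to be an eigenvector for $\lambda_k(\mathcal{Q}_\infty)$. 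Thus $u_{k,\infty}$ is the desired eigenvector.

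The main obstacle is the lower-bound direction of (a) and the identification step in (b): one must be careful that the $H$-orthonormality of the $u_{j,n}$ survives in the limit (guaranteed by the compact embedding $V \hookrightarrow H$), and that the span of the weak limits is genuinely a valid competitor in the min-max for $\mathcal{Q}_\infty$, i.e.\ that it lies in $V_\infty$ and that $\mathcal{Q}_\infty$ is controlled on it by $\lambda_k(\mathcal{Q}_n)$ in the limit. The cleanest way to handle the latter is to avoid passing to the limit in the off-diagonal terms of the bilinear form altogether: for an arbitrary unit combination $v_n = \sum_{j=1}^k c_j u_{j,n}$ one has $\mathcal{Q}_n(v_n) = \sum_j c_j^2 \lambda_j(\mathcal{Q}_n) \le \lambda_k(\mathcal{Q}_n)\|v_n\|_H^2$ by $\mathcal{Q}_n$- and $H$-orthogonality, and then hypothesis (2) applied to $v_n/\|v_n\|_H$ (whose $H$-norm tends to $1$) gives $\mathcal{Q}_\infty(v) \le \lambda_k(\mathcal{Q}_\infty)\|v\|_H^2$ for the weak limit $v = \sum_j c_j u_{j,\infty}$, which is exactly what the min-max needs. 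This keeps the argument entirely at the level of the quadratic form and hypothesis (2), never requiring convergence of mixed terms.
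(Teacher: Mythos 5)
Your part (a) is sound, and it is a genuine (if mild) variant of the paper's route: rather than identifying the weak limits as eigenvectors inductively, you feed the $k$-dimensional span of the $H$-orthonormal weak limits $u_{1,\infty},\dots,u_{k,\infty}$ into the min-max for $\mathcal{Q}_\infty$, using $\mathcal{Q}_n\bigl(\sum_j c_j u_{j,n}\bigr)=\sum_j c_j^2\lambda_j(\mathcal{Q}_n)$ together with hypothesis (2); this indeed avoids passing to the limit in mixed terms (one only has to note that the sub-subsequence limit furnished by (2) must coincide with $\sum_j c_j u_{j,\infty}$ by uniqueness of weak limits in $V$). The paper instead runs an induction that identifies each limit as an eigenvector and then uses $H$-orthogonality to the previously identified eigenvectors; for (a) alone either argument works.

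The gap is in (b), in the step ``a $k$-dimensional space on which $\mathcal{Q}_\infty$ is bounded by $\lambda_k(\mathcal{Q}_\infty)$ \dots\ forcing this space to be spanned by eigenvectors for the first $k$ eigenvalues''. That implication is false as a general principle: with eigenvalues $\lambda_1=0$, $\lambda_2=1$, $\lambda_3=2$ and orthonormal eigenvectors $e_1,e_2,e_3$, the two-dimensional space $W=\Span\{e_1+\tfrac12 e_3,\,e_2\}$ satisfies $\mathcal{Q}(v)\le \lambda_2\|v\|_H^2$ for every $v\in W$, yet $W$ is not spanned by eigenvectors. Moreover, even if the span were spanned by eigenvectors of the first $k$ eigenvalues, a unit vector in it orthogonal to the other limits need not be a $\lambda_k$-eigenvector when $\lambda_{k-1}<\lambda_k$ (think of $(e_1\pm e_2)/\sqrt2$ for $k=2$), so the final ``in particular'' also does not follow. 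What is actually needed — and what the paper's proof supplies — is the inductive identification: $\mathcal{Q}_\infty(u_{1,\infty})\le\lambda_1(\mathcal{Q}_\infty)$ forces $u_{1,\infty}$ to be a first eigenvector; then, assuming $u_{1,\infty},\dots,u_{k-1,\infty}$ are already known to be $H$-orthonormal eigenvectors for $\lambda_1(\mathcal{Q}_\infty),\dots,\lambda_{k-1}(\mathcal{Q}_\infty)$, the orthogonality $(u_{k,\infty},u_{j,\infty})_H=0$ (which survives the limit by the compact embedding $V\hookrightarrow H$, as in the paper's Cauchy--Schwarz computation) together with the variational characterisation of $\lambda_k(\mathcal{Q}_\infty)$ on the orthogonal complement of those eigenvectors gives $\mathcal{Q}_\infty(u_{k,\infty})\ge\lambda_k(\mathcal{Q}_\infty)$, and equality then identifies $u_{k,\infty}$ as a $\lambda_k$-eigenvector. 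Your per-vector bounds $\mathcal{Q}_\infty(u_{j,\infty})\le\lambda_j(\mathcal{Q}_\infty)$ do make this induction run, but the induction itself is the missing ingredient; the span bound by $\lambda_k$ alone cannot replace it, so (b) as written is not proved.
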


Here we will write $\mathcal{Q}_n(v)$ as shorthand for $\mathcal{Q}_n(v,v)$, and we will denote the inner product on $H$ by $(\cdot,\cdot)_H$. We recall that $\lambda$ is an eigenvalue for the form $\mathcal{Q}_n$ if there exists $u\in V_n$ such that $\mathcal{Q}_n(u,v)=\lambda (u,v)_H$ for all $v\in V_n$.

\begin{proof}
Observe first that the eigenvalues $\lambda_k(\mathcal{Q}_n)$ are given by the usual min-max formula; hence it follows directly from the inclusions \eqref{eq:embedding-chain} and (1) that, for each $k\geq 1$, $(\lambda_k(\mathcal{Q}_n))_{n \in \N}$ is a monotonically increasing sequence in $n$, bounded from above by $\lambda_k(\mathcal{Q}_\infty)$.

We will prove the claims by induction on $k$. For $k=1$, take a sequence $(u_{1,n})_{n\in\N}$ of eigenvectors, each normalised such that $\|u_{1,n}\|_H = 1$. Since $\mathcal{Q}_n(u_{1,n}) = \lambda_1 (\mathcal{Q}_n)$ is bounded in $n$, by (2), up to a subsequence, $(u_{1,n})_{n\in\N}$ has a weak limit $u^\ast \in V_\infty$; by compactness of the embedding $V \hookrightarrow H$, we also have $\|u^\ast\|_H = 1$. Moreover, $\mathcal{Q}_\infty (u^\ast) \leq \liminf_{n \to \infty} \mathcal{Q}_n (u_{1,n})$. Recalling that the $u_{1,n}$ are eigenvectors and that the sequence $(\lambda_1(\mathcal{Q}_n))_{n\in\N}$ is monotonically increasing, we see this means that
\begin{displaymath}
	\lambda_1 (\mathcal{Q}_\infty) \leq \mathcal{Q}_\infty (u^\ast) \leq \lim_{n \to \infty} \lambda_1(\mathcal{Q}_n) \leq \lambda_1 (\mathcal{Q}_\infty);
\end{displaymath} 
hence there is equality. Equality also implies that $u^\ast$ is an eigenvector associated with $\lambda_1 (\mathcal{Q}_\infty)$.

Now fix $k\geq 2$ suppose the statement is true for $j=1,\ldots,k-1$. We suppose that, up to a subsequence in $n$, the normalised eigenvectors $u_{j,n}$ of $\lambda_j(\mathcal{Q}_n)$ converge weakly in $V$ to the eigenvector $u_{j,\infty}$ of $\lambda_j(\mathcal{Q}_\infty)$ as $n\to\infty$, for all $j=1,\ldots,k-1$. Now we may repeat the argument used for $k=1$: up to a subsequence the sequence $(u_{k,n})_{n\in\N}$ has a weak limit $u_k^\ast \in V_\infty$ since $\mathcal{Q}_n(u_{k,n})=\lambda_k(\mathcal{Q}_n)$ is likewise bounded, and for this limit we have $\|u_k^\ast\|_H = 1$ and
\begin{displaymath}
	\mathcal{Q}_\infty (u_k^\ast) \leq \lim_{n \to \infty} \lambda_k(\mathcal{Q}_n) \leq \lambda_k (\mathcal{Q}_\infty).
\end{displaymath}
All the claims of the lemma will now follow from the min-max characterisation of $\lambda_k(\mathcal{Q}_\infty)$ if we can show that $(u_k^\ast,u_{j,\infty})_H=0$ for all $j=1,\ldots,k-1$. But since $(u_{j,n},u_{k,n})_H=0$ for all $j\neq k$ and all $n\in\N$, we have, by strong convergence and the Cauchy--Schwarz inequality in $H$,
\begin{equation*}
	|(u_{j,\infty},u_{k}^\ast)_H| \leftarrow |(u_{j,n},u_{k}^\ast)_H| = |(u_{j,n},u_{k,n})_H-(u_{j,n},u_{k}^\ast)_H| 
	      \leq \|u_{j,n}\|_H \|u_{k,n}-u_{k}^\ast\|_H \to 0
\end{equation*}
as $n \to \infty$. It follows that indeed $(u_{j,\infty},u_k^\ast)_H=0$ as required. This completes the proof.
\end{proof}

\begin{proof}[Proof of Theorem~\ref{thm:positive-convergence-1}]
In each case the statement can be proved through a direct application of Lemma~\ref{lem:mosco-form}. We will only give the proof of (a) in all details, as in the other two cases the arguments are essentially identical. Clearly, it suffices to prove the discrete version of (a), that is, we suppose $\beta \in (-\infty,+\infty]$ is fixed (in fact, for ease of exposition we will take $\beta \in \R$) and take a sequence $\gamma_n \to +\infty$. We then choose $H=L^2(\Omega)$, $V=V_1 = V_2 = \ldots = H^2(\Omega)$, $V_\infty = H^2(\Omega) \cap H^1_0 (\Omega)$, as well as
\begin{displaymath}
	\mathcal{Q}_n = \mathcal{Q}_{\beta,\gamma_n}, \qquad \mathcal{Q}_\infty = \mathcal{Q}_{\beta,+\infty}.
\end{displaymath}
Now it follows from our choices of $V_n$ and $\mathcal{Q}_n$ that, for any $v \in H^2(\Omega)$,
\begin{displaymath}
	\mathcal{Q}_{n+1} (v) - \mathcal{Q}_{n} (v) = \int_{\partial\Omega} (\gamma_{n+1} - \gamma_n)|v|^2\,dx \geq 0,
\end{displaymath}
while for any $v \in H^2 (\Omega) \cap H^1_0 (\Omega)$,
\begin{displaymath}
	\mathcal{Q}_n (v) = \int_\Omega (1-\sigma)|D^2 v|^2 + \sigma |\Delta v|^2 + \alpha |\nabla v|^2\,dx + \beta\int_{\partial\Omega}\left|\frac{\partial v}{\partial\nu}\right|^2
	\,d\mathcal{H}^{d-1}(x) = \mathcal{Q}_\infty (v);
\end{displaymath}
this means that condition (1) of Lemma~\ref{lem:mosco-form} is satisfied. For (2), we  let $v_n \in H^2 (\Omega)$ be any functions for which $\mathcal{Q}_n(v_n)$ forms a bounded sequence. We show then that, up to a subsequence, there exists some $v \in H^2(\Omega) \cap H^1_0 (\Omega)$ such that $v_n \rightharpoonup v$ weakly in $H^2(\Omega)$ as $n \to \infty$. Firstly note that the $L^2$-ellipticity of the forms $\mathcal{Q}_{\beta,0}$, together with the fact that $\mathcal{Q}_{n}(v) \geq \mathcal{Q}_{\beta,0}(v)$ for all (sufficiently large) $n \in \N$ and for all $v\in H^2(\Omega)$, means that $v_n$ also forms a bounded sequence in $H^2(\Omega)$ and hence up to a subsequence admits a weak limit $v \in H^2(\Omega)$. We claim that in fact $v \in H^2(\Omega) \cap H^1_0 (\Omega)$. To see this, observe that the boundedness of the sequence
\begin{displaymath}
	\gamma_n \int_{\partial\Omega} |v_n|^2\,dx
\end{displaymath}
implies that $v_n|_{\partial\Omega} \to 0$ in $L^2(\partial\Omega)$. But the compactness of the trace operator from $H^2(\Omega)$ to $L^2(\partial\Omega)$ and the weak convergence of $v_n$ to $v$ in $H^2(\Omega)$ means that $v_n \to v$ strongly in $L^2(\partial\Omega)$, and hence $v=0$ in $L^2(\partial\Omega)$, that is, $v \in H^1_0 (\Omega)$, as claimed.

It remains to show condition (2) of Lemma~\ref{lem:mosco-form}, that $\mathcal{Q}_\infty (v) \leq \liminf_{n\to\infty} \mathcal{Q}_n (v_n)$. But it follows from the weak convergence of the $v_n$ to $v$ in $H^2(\Omega)$ that
\begin{displaymath}
\begin{aligned}
	\int_\Omega (1-\sigma)|D^2v|^2 + \sigma |\Delta v|^2\,dx &\leq \liminf_{n\to\infty} \int_\Omega (1-\sigma)|D^2v_n|^2 + \sigma |\Delta v_n|^2\,dx,\\
	\alpha\int_\Omega |\nabla v|^2\,dx &= \lim_{n\to\infty} \alpha \int_\Omega|\nabla v_n|^2\,dx,\\
	\beta\int_{\partial\Omega} \left|\frac{\partial v}{\partial\nu}\right|^2\,dx &= \lim_{n\to\infty} \beta\int_{\partial\Omega} \left|\frac{\partial v_n}{\partial\nu}\right|^2\,dx,\\
	0 &\leq \liminf_{n\to\infty} \gamma_n \int_{\partial\Omega} |v_n|^2\,dx,
\end{aligned}
\end{displaymath}
where the first relation is due to the fact that \eqref{h2equivnorm} defines an equivalent norm on $H^2(\Omega)$, plus the strong convergence of the $L^2$-norms $\|v_n\|_2 \to \|v\|_2$; the second follows from the compactness of the embedding $H^2(\Omega) \hookrightarrow H^1(\Omega)$; the third from the compactness of the first-order trace mapping $H^2(\Omega) \to L^2(\partial\Omega)$, $u \mapsto \frac{\partial u}{\partial\nu}$; and the fourth is trivial since $\gamma_n \geq 0$. Summing these four relations completes the proof that (2) holds. Applying Lemma~\ref{lem:mosco-form} now immediately yields part (a) of the theorem.

For cases (b) and (c), we take $V_n$ and $\mathcal{Q}_n$ as before; but now $V_\infty = \{v \in H^2(\Omega): \frac{\partial v}{\partial\nu} = 0\}$ and $\mathcal{Q}_\infty = \mathcal{Q}_{+\infty,\gamma}$ in case (b), and $V_\infty = H^2_0 (\Omega)$ and $\mathcal{Q}_\infty = \mathcal{Q}_{+\infty,+\infty}$ in case (c). The rest of the argument is analogous.
\end{proof}

\begin{proof}[Proof of Theorem~\ref{thm:positive-convergence-2}]
As above, given $\sigma$, $\alpha$ and $\beta$, after a shift if necessary we may suppose without loss of generality that $\alpha\geq 0$, $\beta \in [0,+\infty]$ and $\gamma>0$ and, as above, that $-1$ is not an eigenvalue for any $\gamma \in (0, +\infty]$. For $h \in L^2 (\Omega)$ we set $u := \Res{\beta}{\gamma}h$ and $\tilde u := \Res{\beta}{+\infty}h$ as well as $w:=u-\tilde u = (\Res{\beta}{\gamma} - \Res{\beta}{+\infty})h$; then $u,\tilde u,w \in H^4 (\Omega)$ since $\partial\Omega\in C^4$ (see \cite[Theorem 2.20]{ggs}). Then $w$ satisfies the equation $\Delta^2 w - \alpha\Delta w + w = 0$ in $L^2(\Omega)$, and, upon multiplying by $w$, integrating over $\Omega$, integrating by parts, and using the boundary conditions that $u$ and $\tilde u$ satisfy, we obtain
\begin{multline*}
	-\int_\Omega w^2\,dx = \int_\Omega w(\Delta^2 w - \alpha\Delta w)\,dx
	= \int_\Omega (1-\sigma)|D^2w|^2+\sigma|\Delta w|^2 + \alpha |\nabla w|^2\,dx \\
	+ \int_{\partial\Omega} \beta \left|\frac{\partial w}{\partial\nu}\right|^2 
		+ w(-\Gamma(w))\,d\mathcal{H}^{d-1}(x).
\end{multline*}
Since by definition $w$ satisfies the boundary condition $w = -\frac{1}{\gamma} (\Gamma(\tilde u)+\Gamma(w))$ in $L^2(\partial\Omega)$, we obtain
\begin{multline*}
	\int_\Omega (1-\sigma)|D^2w|^2+\sigma|\Delta w|^2 + \alpha |\nabla w|^2 + w^2\,dx + \int_{\partial\Omega} \beta \left|\frac{\partial w}{\partial\nu}\right|^2 
	+ \frac{1}{\gamma} \Gamma(w)^2\,d\mathcal{H}^{d-1}(x) \\
= -\frac{1}{\gamma}\int_{\partial\Omega} \Gamma(w)\Gamma(\tilde u)\,d\mathcal{H}^{d-1}(x) \geq 0.
\end{multline*}
Since $\alpha,\beta\geq 0$, $\gamma > 0$ by assumption, this in turn implies
\begin{displaymath}
\begin{aligned}
	\int_\Omega w^2\,dx + \frac{1}{\gamma}\int_{\partial\Omega} \Gamma(w)^2\,d\mathcal{H}^{d-1}(x) 
	&\leq \frac{1}{\gamma}\left|\int_{\partial\Omega} \Gamma(w)\Gamma(\tilde u)\,d\mathcal{H}^{d-1}(x)\right|\\
	&\leq \frac{1}{2\gamma}\|\Gamma(w)\|_{2,\partial\Omega}^2 + \frac{1}{2\gamma} \|\Gamma(\tilde u)\|_{2,\partial\Omega}^2,
\end{aligned}
\end{displaymath}
leading to
\begin{displaymath}
	\|w\|_2 \leq \frac{1}{\sqrt{2\gamma}}\|\Gamma(\tilde u)\|_{2,\partial\Omega}.
\end{displaymath}
Using the trace estimates
\begin{equation}
\label{eq:domain-estimate-1}
	\left\|\frac{\partial \tilde u}{\partial\nu}\right\|_{2,\partial\Omega},\,\left\|\frac{\partial}{\partial\nu}\Delta \tilde u\right\|_{2,\partial\Omega},\,
	\left\|\divergence_{\partial\Omega} \frac{\partial}{\partial\nu} \nabla_{\partial\Omega}\tilde u\right\|_{2,\partial\Omega} \leq C(\Omega) \|\tilde u\|_{H^4(\Omega)}
\end{equation}
as well as the elliptic regularity estimate (cf.\ \cite[Theorem 2.20]{ggs})
\begin{equation}
\label{eq:h4-control}
	\|\tilde u\|_{H^4(\Omega)} \leq \tilde{C}(\Omega,\sigma,\alpha,\beta) \|h\|_2,
\end{equation}
and recalling the definition of $w$, we thus conclude that
\begin{displaymath}
	\|(\Res{\beta}{\gamma} - \Res{\beta}{+\infty}) h\|_2 \leq \frac{1}{\sqrt{2\gamma}}\|\Gamma(\tilde u)\|_{2,\partial\Omega} \leq \frac{C(\Omega)}{\sqrt{\gamma}}\|\tilde u\|_{H^4(\Omega)}
	\leq \frac{C_1(\Omega,\sigma,\alpha,\beta)}{\sqrt{\gamma}} \|h\|_2,
\end{displaymath}
for all $h \in L^2(\Omega)$ and all $\gamma > 0$. This means exactly that
\begin{equation*}
	\|\Res{\beta}{\gamma} - \Res{\beta}{+\infty}\| \leq \frac{C_1(\Omega,\sigma,\alpha,\beta)}{\sqrt{\gamma}}.
\end{equation*}
Adapting the above argument in the obvious way, it easily follows that also
\begin{equation*}
	\|\Res{\beta}{\gamma} - \Res{+\infty}{\gamma}\| \leq \frac{C_2(\Omega,\sigma,\alpha,\gamma)}{\sqrt{\beta}}
\end{equation*}
for some constant $C_2(\Omega,\sigma,\alpha,\gamma)>0$, for any fixed $\gamma \in (0,+\infty]$.

The argument to obtain the eigenvalue bounds is a simple one, following exactly the same lines as in \cite{filinovskiy14}. We briefly sketch the arugment in case (a); case (b) is,  again, completely analogous. It follows from the resolvent estimate that the corresponding eigenvalues $\mu_k(\beta,\gamma)$, as defined in \eqref{eq:lambda-mu}, satisfy the same bound: for sufficiently large $\gamma>0$ we have
\begin{displaymath}
	|\mu_k(\beta,\gamma) - \mu_k(\beta,+\infty)| \leq \frac{C_1(\Omega,\sigma,\alpha,\beta)}{\sqrt{\gamma}}.
\end{displaymath}
Recalling the definition of $\mu_k$, this rearranges to
\begin{displaymath}
	|\lambda_k(\beta,+\infty) - \lambda_k(\beta,\gamma)| \leq \frac{C_1(\Omega,\sigma,\alpha,\beta)}{\sqrt{\gamma}}(\lambda_k(\beta,+\infty)+1)(\lambda_k(\beta,\gamma)+1).
\end{displaymath}
The monotonicity statement $\lambda_k (\beta,\gamma) \leq \lambda_k(\beta,+\infty)$ now yields the conclusion.
\end{proof}


\section{Convergence for large positive values of the tension parameter}
\label{sec:conv-lapl}

In this section we focus on the behaviour of the various Robin problems when the tension parameter $\alpha$ diverges to plus infinity. As already mentioned, this parameter plays a fundamentally different role, and has a completely different effect on the family of operators, than the Robin parameters $\beta$ and $\gamma$; this will also be seen in the techniques and the results in the respective situations. The strategy we follow here is to write the operator associated with the Robin problem as a perturbation of the type
$$
L+\varepsilon G,
$$
where up to a renormalisation $L$ represents the Laplacian term (or more precisely the associated bilinear form) and $G$ the Bilaplacian and boundary terms, and then take the limit as $\varepsilon\to 0+$.  We will distinguish between two cases: where $\beta$ and $\gamma$ remain bounded as $\alpha \to +\infty$, and model cases where they diverge like $\alpha$.

In order to set the notation, we recall that the Dirichlet Laplacian eigenvalue problem reads
\begin{equation*}
\begin{cases}
-\Delta v= \mu^D v, & \text{\ in }\Omega,\\
v=0, & \text{\ on }\partial\Omega,
\end{cases}
\end{equation*}
while the Neumann one reads
\begin{equation*}
\begin{cases}
-\Delta v= \mu^N v, & \text{\ in }\Omega,\\
\frac{\partial v}{\partial\nu}=0, & \text{\ on }\partial\Omega,
\end{cases}
\end{equation*}
and the Robin one is (cf.\ \eqref{robinlaplacian})
\begin{equation}
\label{eq:robin-laplacian}
\begin{cases}
	-\Delta v = \mu^R v, &\text{\ in } \Omega,\\
	\frac{\partial v}{\partial\nu} + \tilde\gamma v = 0 \qquad &\text{\ on } \partial\Omega,
\end{cases}
\end{equation}
for some given $\tilde\gamma \in \R$. We number the Dirichlet, Neumann and Robin Laplacian eigenvalues as increasing sequences (counted with multiplicities), and denote them by $\mu_k^D$, $\mu_k^N$, and $\mu_k^R = \mu_k^R(\tilde\gamma)$, respectively.

\medskip

\textbf{The case of bounded $\beta$ and $\gamma$.}

\begin{theorem}
\label{conv-alpha}
Let $\beta,\gamma\in\mathbb R$ be fixed (or, more generally, bounded), let $\sigma\in(-\frac{1}{d-1},1)$, and let $\Omega \subset \R^d$ be a bounded domain with Lipschitz boundary. Then the following statements hold.
\begin{enumerate}
\item Let $\lambda_k(\alpha)$ be the $k$-th eigenvalue of either the NR problem \eqref{navier-robin} or the Dirichlet problem \eqref{dirichlet}. Then
\begin{equation*}
\lim_{\alpha\to+\infty}\frac{\lambda_k(\alpha)}{\alpha}=\mu_k^D.
\end{equation*}
Moreover, the eigenprojection associated with $\lambda_k(\alpha)$ converges in $L^2(\Omega)$ to the eigenprojection associated with $\mu_k^D$.

\item Let $\lambda_k(\alpha)$ be the $k$-th eigenvalue of either the Full Robin problem \eqref{robin1strong} or the KS problem \eqref{kuttler-sigillito}. Then
\begin{equation*}
\lim_{\alpha\to+\infty}\frac{\lambda_k(\alpha)}{\alpha}=\mu_k^N.
\end{equation*}
Moreover, the eigenprojection associated with $\lambda_k(\alpha)$ converges in $L^2(\Omega)$ to the eigenprojection associated with $\mu_k^N$.
\end{enumerate}
\end{theorem}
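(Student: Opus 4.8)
The plan is to divide the Robin form by $\alpha$ and treat the problem as a singular perturbation of the appropriate Laplacian form. Setting $\varepsilon=1/\alpha$, let $\mathcal{Q}^{(\varepsilon)}(u):=\tfrac1\alpha\mathcal{Q}_{\Omega,\sigma,\alpha,\beta,\gamma}(u,u)$, so that $\mathcal{Q}^{(\varepsilon)}(u)=\varepsilon\big(\int_\Omega(1-\sigma)|D^2u|^2+\sigma|\Delta u|^2\,dx+\beta\int_{\partial\Omega}|\tfrac{\partial u}{\partial\nu}|^2+\gamma\int_{\partial\Omega}|u|^2\big)+\int_\Omega|\nabla u|^2\,dx$, considered on the form domain $V$ of the problem at hand ($V=H^2(\Omega)$ for FR, $V=\{u\in H^2(\Omega):\frac{\partial u}{\partial\nu}=0\}$ for KS, $V=H^2(\Omega)\cap H^1_0(\Omega)$ for NR, $V=H^2_0(\Omega)$ for Dirichlet). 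Then the $k$-th eigenvalue of $\mathcal{Q}^{(\varepsilon)}$ is exactly $\lambda_k(\alpha)/\alpha$, and the natural limit object is the form $L(u)=\int_\Omega|\nabla u|^2\,dx$, taken on $H^1(\Omega)$ in cases FR and KS (eigenvalues $\mu_k^N$) and on $H^1_0(\Omega)$ in cases NR and Dirichlet (eigenvalues $\mu_k^D$). Note that $V$ embeds into, but is strictly smaller than, the limiting form domain, so this is ``dual'' to the setting of Lemma~\ref{lem:mosco-form}, and I would argue directly with the min-max principle \eqref{minmax} rather than appeal to an abstract result.

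The lower bound comes from the same trace/Ehrling argument as in the proof of Theorem~\ref{lem:robin-form}, used to absorb the boundary terms into the fourth-order term (which is precisely where $\sigma\in(-\tfrac1{d-1},1)$ enters, via \eqref{coerciveeq}): it yields a constant $C=C(\Omega,\sigma,\beta,\gamma)$, uniform for $\beta,\gamma$ in a bounded set, with $\mathcal{Q}^{(\varepsilon)}(u)+\varepsilon C\|u\|_2^2\ge\tfrac{\varepsilon}{2}c_\sigma\|D^2u\|_2^2+\|\nabla u\|_2^2$ for all $u\in H^2(\Omega)$ and some $c_\sigma>0$; in particular $\mathcal{Q}^{(\varepsilon)}(u)+\varepsilon C\|u\|_2^2\ge L(u)$. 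By \eqref{minmax} and domain monotonicity ($V$ being contained in the limiting form domain) one gets $\lambda_k(\alpha)/\alpha+\varepsilon C\ge\mu_k^N$ (resp.\ $\mu_k^D$), hence $\liminf_{\alpha\to+\infty}\lambda_k(\alpha)/\alpha\ge\mu_k^N$ (resp.\ $\mu_k^D$); applied to a normalised eigenfunction $u_{k,\alpha}$, the same inequality also gives $\|D^2u_{k,\alpha}\|_2^2=O(\alpha)$ and a uniform $H^1(\Omega)$-bound on $u_{k,\alpha}$, which I will reuse below.

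For the matching upper bound I would use test functions: take the first $k$ eigenfunctions $v_1,\dots,v_k$ of the Neumann (resp.\ Dirichlet) Laplacian and approximate them in $H^1(\Omega)$ by elements of $V$ — this uses density of $C^\infty(\overline\Omega)$ in $H^1(\Omega)$ for FR, of $C_c^\infty(\Omega)$ in $H^1_0(\Omega)$ for NR and Dirichlet, and of $\{u\in H^2(\Omega):\frac{\partial u}{\partial\nu}=0\}$ in $H^1(\Omega)$ for KS (the last obtained, e.g., by correcting a smooth function with a boundary-layer term supported in a collar of vanishing width). After an $L^2$-Gram--Schmidt one obtains an $L^2$-orthonormal $k$-tuple $\phi_1,\dots,\phi_k\in V$ whose Rayleigh quotients for $L$ are at most $\mu_k^N+o(1)$ (resp.\ $\mu_k^D+o(1)$); since $\mathcal{Q}^{(\varepsilon)}(u)\le L(u)+\varepsilon M\|u\|_2^2$ on $W:=\Span\{\phi_j\}$ with $M$ depending only on the (fixed) $\phi_j$ and on $\beta,\gamma$, plugging $W$ into \eqref{minmax}, letting $\alpha\to+\infty$, and then improving the approximation gives $\limsup_{\alpha\to+\infty}\lambda_k(\alpha)/\alpha\le\mu_k^N$ (resp.\ $\mu_k^D$), establishing the eigenvalue limits in both parts.

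For the eigenprojections, the uniform $H^1(\Omega)$-bound lets me pass (along a subsequence $\alpha\to+\infty$) to a weak $H^1$-limit $u_k^\ast$ of the normalised eigenfunctions $u_{k,\alpha}$, with $u_{k,\alpha}\to u_k^\ast$ strongly in $L^2(\Omega)$, so $\|u_k^\ast\|_2=1$ (and $u_k^\ast\in H^1_0(\Omega)$ in the NR/Dirichlet cases); testing the weak identity $\mathcal{Q}^{(\varepsilon)}(u_{k,\alpha},\phi)=(\lambda_k(\alpha)/\alpha)(u_{k,\alpha},\phi)_{L^2}$ against $\phi$ in a dense subclass of the limiting form domain, every term carrying a factor $\varepsilon$ is $O(\varepsilon^{1/2})$ — using $\|u_{k,\alpha}\|_{H^2}=O(\alpha^{1/2})$ and the trace inequality for the boundary terms — so in the limit $\int_\Omega\nabla u_k^\ast\cdot\nabla\phi=\mu_k^N(u_k^\ast,\phi)_{L^2}$ (resp.\ with $\mu_k^D$, $\phi\in H^1_0$), i.e.\ $u_k^\ast$ is a Neumann (resp.\ Dirichlet) Laplacian eigenfunction for $\mu_k^N$ (resp.\ $\mu_k^D$); preservation of $L^2$-orthonormality, argued exactly as in the proof of Lemma~\ref{lem:mosco-form}, forces the $u_k^\ast$ to span the correct eigenspaces, so the eigenprojections converge in $L^2(\Omega)$, and independence of the subsequence gives the full limit. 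The step I expect to be the main obstacle is controlling the rescaled transversal boundary term $\varepsilon\beta\int_{\partial\Omega}|\frac{\partial u}{\partial\nu}|^2$ (and analogously $\varepsilon\gamma\int_{\partial\Omega}|u|^2$ for KS): along rescaled eigenfunctions it is genuinely of size $O(1)$, not $o(1)$, because $\|\frac{\partial u_{k,\alpha}}{\partial\nu}\|_{2,\partial\Omega}^2\lesssim\|u_{k,\alpha}\|_{H^2}^2=O(\alpha)$, so its sign must be pinned down, which is exactly what the coercivity of the fourth-order part — hence the restriction on $\sigma$ — supplies through the trace/Ehrling estimate; a secondary technical point is the density of the KS form domain $\{u\in H^2(\Omega):\frac{\partial u}{\partial\nu}=0\}$ in $H^1(\Omega)$ on a merely Lipschitz domain.
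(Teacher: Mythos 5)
Your proof is correct and reaches the right conclusion, but it takes a genuinely different route from the paper. The paper's proof is much shorter: after writing $\mathcal Q = G + \alpha L$ on the relevant form domain $V$, it renormalises to $\widetilde{\mathcal Q} = L + \tfrac1\alpha G$ and invokes Kato's monotone-form convergence theorem \cite[Theorem~7]{kato53}, under which the operators associated with $L + \varepsilon G$ on $V$ converge in the appropriate (norm-resolvent / eigenprojection) sense, as $\varepsilon\to 0^+$, to the Friedrichs extension of $L|_V$, which is the Neumann or Dirichlet Laplacian depending on $V$. What you have done is essentially to unpack the proof of that theorem in this concrete setting: the lower bound via Ehrling and form-domain monotonicity in \eqref{minmax}, the matching upper bound via approximate eigenfunctions, and the eigenprojection convergence via a subsequence compactness argument analogous to Lemma~\ref{lem:mosco-form}. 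Both arguments rely on the same foundational facts: the coercivity \eqref{coerciveeq}, the compactness of the trace/embedding maps (the Ehrling estimate), and — crucially — the fact that $V$ is dense in the relevant limiting $H^1$-space, which identifies the Friedrichs extension; this is the density fact you correctly flag as a subtle point for the KS domain $\{u\in H^2(\Omega):\partial_\nu u=0\}$ on a merely Lipschitz domain, and the paper's argument relies on exactly the same claim (implicitly, in the assertion that the Friedrichs extension is the Neumann Laplacian). The paper's approach buys brevity and a clean black-box structure; yours buys self-containment and an explicit view of the decay rates ($O(\varepsilon^{1/2})$ for the rescaled fourth-order and boundary terms against a fixed test function) and of where the restriction on $\sigma$ enters the limit argument.
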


\begin{proof}

Since all the problems we consider share the same quadratic form $\mathcal Q = \mathcal Q_{\Omega,\sigma,\alpha,\beta,\gamma}$ (defined in \eqref{qf}), it is sufficient to handle this form generically and then specialise with respect to the form domain depending on the particular problem we want to consider. We define
\begin{equation*}
L(u, v )=\int_{\Omega} \nabla u\cdot \nabla v  \,dx,
\end{equation*}
and
\begin{equation*}
G_{\sigma,\beta,\gamma}(u, v )=\int_{\Omega} \left((1-\sigma)D^2u:D^2 v +\sigma\Delta u\Delta v \right)\,dx
+\beta \int_{\partial\Omega}\frac{\partial u}{\partial\nu}\frac{\partial  v }{\partial\nu} \,d\mathcal{H}^{d-1}(x)
+\gamma \int_{\partial\Omega} u  v  \,d\mathcal{H}^{d-1}(x),
\end{equation*}
where for the meantime both are defined on the natural form domain of $G=G_{\sigma,\beta,\gamma}$, namely $H^2_0(\Omega)$ for the Dirichlet problem, $H^2(\Omega)\cap H^1_0(\Omega)$ for the NR problem, $\{u\in H^2(\Omega): \frac{\partial u}{\partial\nu}=0\}$ for the KR problem, and $H^2(\Omega)$ for the Full Robin problem. 
Clearly, we have that $\mathcal Q=G+\alpha L$. Moreover, both $L$ and $G$ are quadratic forms densely defined in $L^2(\Omega)$ and, depending on the form domain, can be associated with self-adjoint operators with compact resolvent: $G$ can be associated with the Bilaplacian (in the case $\alpha=0$), while the Friedrichs extension of the operator associated with $L$ (i.e., the operator associated with $L$ on the closure of the form domain, $H^1_0 (\Omega)$ in the Dirichlet and NR cases, $H^1(\Omega)$ in the other cases) is the Laplacian with Dirichlet conditions in the first two cases and Neumann ones in the latter two.

We are then under the hypotheses of \cite[Theorem 7]{kato53} (cf.\ \cite{kato76}) applied to the operator associated with the bilinear form
\begin{equation}
\label{qtilde}
\widetilde{\mathcal Q}:=L+\frac1\alpha G,
\end{equation}
with the same form domains as described above; in particular, the differential problem associated with the bilinear form $\widetilde{\mathcal Q}$ has the same boundary conditions as for $\mathcal Q$, while the equation (which is the same in all cases) becomes
$$
\frac1\alpha\Delta^2 u-\Delta u= \frac\lambda\alpha u.
$$
The claimed convergence as $\alpha \to +\infty$ now follows from \cite[Theorem 7]{kato53}.
\end{proof}

\begin{remark}
We note that \cite[Theorem 8]{kato53} and, as a complement, also \cite[Theorem 2.4]{gre69} (see also \cite{gre81}) provide a further asymptotic expansion of the eigenvalues in terms of $\alpha$ and also provide estimates for the rate of convergence of the eigenfunctions. However, in order to get this type of additional information one would need the stronger hypothesis that the relevant Laplacian eigenfunction is in the form domain of $G$. In order to have this condition satisfied in the Full Robin case, it suffices for instance that the domain $\Omega$ is smooth enough (e.g., $\Omega\in C^2$); however, the Dirichlet Bilaplacian cannot be treated within this framework since no Laplacian eigenfunction can be in $H^2_0(\Omega)$. At any rate, the additional information we obtain if the condition is satisfied includes in particular an estimate of the type
\begin{equation}
\label{eq:asympt_eigenv}
\lambda_k(\alpha)=\alpha \mu_k^{\sharp}+\lambda_k'+\mathcal{O}(\alpha^{-1}),
\end{equation}
where $\mu_k^{\sharp}$ is either $\mu_k^{D}$ or $\mu_k^{N}$, and $\lambda_k'$ can be computed explicitly in terms of the eigenspace associated with $\mu_k^{\sharp}$.
\end{remark}

\begin{problem}
Study asymptotic expansions of the type of \eqref{eq:asympt_eigenv} in more detail for any of the biharmonic Robin problem.
\end{problem}

In Theorem \ref{conv-alpha} the Robin parameters are assumed to be constant (or bounded), and this results in their disappearance in the limit. On the other hand, if they are allowed to diverge to infinity at the same time that $\alpha \to +\infty$, then the analysis of the behaviour of the eigenvalues may become more involved (cf.\ Remark \ref{rem:interplay}). However, in specific cases we can still recover some Robin parameter in the limit. Here we will consider the rather natural case of a linear relationship, $\gamma = \alpha\tilde\gamma$ for some fixed $\tilde\gamma \in \R$ with $\beta$ bounded, and, separately $\beta = \alpha\tilde\beta$ for some fixed $\tilde\beta \in \R$ with $\gamma$ bounded.

\medskip
\textbf{The case of divergent $\gamma$.} We first treat the former case, which is much simpler: we will, after a suitable renormalisation, recover the Robin Laplacian \eqref{eq:robin-laplacian}.

\begin{theorem}
\label{thm:real-robin-convergence}
Let $\beta$ be bounded, and let $\gamma=\alpha\tilde\gamma$ for some fixed $\tilde\gamma \in \R$, and write $\lambda_k(\alpha,\gamma)$ for the $k$-th eigenvalue of either the Full Robin problem \eqref{robin1strong} or the KS problem \eqref{kuttler-sigillito}. Then
\begin{equation*}
\lim_{\alpha\to+\infty}\frac{\lambda_k(\alpha,\gamma)}{\alpha}=\mu^R_k (\tilde\gamma),
\end{equation*}
with convergence of the corresponding eigenprojections in $L^2(\Omega)$.
\end{theorem}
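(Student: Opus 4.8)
The plan is to mimic the proof of Theorem~\ref{conv-alpha}, viewing the rescaled form $\frac{1}{\alpha}\mathcal{Q}$ as a singular perturbation of the form $L+\frac{1}{\alpha}G$ and invoking \cite[Theorem~7]{kato53}, but with one crucial bookkeeping change: under the scaling $\gamma=\alpha\tilde\gamma$ the boundary term $\gamma\int_{\partial\Omega}uv = \alpha\tilde\gamma\int_{\partial\Omega}uv$ is of the same order as the Dirichlet energy $\alpha\int_\Omega\nabla u\cdot\nabla v$, so it must be absorbed into the \emph{leading-order} part of the form rather than into the perturbation. Concretely I would set
\[
	\widetilde L_{\tilde\gamma}(u,v) := \int_\Omega \nabla u\cdot\nabla v\,dx + \tilde\gamma\int_{\partial\Omega} uv\,d\mathcal{H}^{d-1}(x),
\]
which is exactly the bilinear form of the Robin Laplacian \eqref{eq:robin-laplacian}, together with
\[
	\widetilde G_{\sigma,\beta}(u,v) := \int_\Omega \bigl((1-\sigma)D^2u:D^2v + \sigma\Delta u\Delta v\bigr)\,dx + \beta\int_{\partial\Omega}\frac{\partial u}{\partial\nu}\frac{\partial v}{\partial\nu}\,d\mathcal{H}^{d-1}(x),
\]
so that $\mathcal{Q}_{\Omega,\sigma,\alpha,\beta,\alpha\tilde\gamma} = \widetilde G_{\sigma,\beta} + \alpha\widetilde L_{\tilde\gamma}$ and hence, after rescaling, $\widetilde{\mathcal Q}:=\frac{1}{\alpha}\mathcal{Q} = \widetilde L_{\tilde\gamma} + \frac{1}{\alpha}\widetilde G_{\sigma,\beta}$, whose $k$-th eigenvalue is precisely $\lambda_k(\alpha,\gamma)/\alpha$. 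Both forms are densely defined and (after a shift, see below) closed on the appropriate form domain: $H^2(\Omega)$ for the Full Robin problem and $\{u\in H^2(\Omega):\frac{\partial u}{\partial\nu}=0\}$ for the KS problem, matching the setup of Theorem~\ref{conv-alpha}.

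Since $\beta$ and $\tilde\gamma$ are bounded, a standard Ehrling/trace-inequality argument (exactly as in \eqref{traceinequality-collection}) shows that after adding fixed multiples of $\|u\|_2^2$ — which are innocuous in the limit $\alpha\to+\infty$, contributing only $O(\alpha^{-1})$ terms to $\widetilde{\mathcal Q}$ — we may assume $\widetilde G_{\sigma,\beta}\geq 0$ on its form domain and $\widetilde L_{\tilde\gamma}$ bounded below. One is then exactly in the situation of \cite[Theorem~7]{kato53} applied to $\widetilde{\mathcal Q}=\widetilde L_{\tilde\gamma}+\varepsilon\,\widetilde G_{\sigma,\beta}$ with $\varepsilon=1/\alpha\to0^+$: the eigenvalues of $\widetilde{\mathcal Q}$ converge to those of the self-adjoint operator associated with $\widetilde L_{\tilde\gamma}$ on the closure of its form domain with respect to the $\widetilde L_{\tilde\gamma}$-form norm. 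In the Full Robin case $H^2(\Omega)$ is dense in $H^1(\Omega)$, and in the KS case $\{u\in H^2(\Omega):\frac{\partial u}{\partial\nu}=0\}$ is likewise dense in $H^1(\Omega)$ (approximate an $H^1$-function by functions in $C^\infty(\overline\Omega)$ and then flatten the normal derivative in a thin collar near $\partial\Omega$ without changing the $H^1$-norm by more than any prescribed amount); in either case the limiting form is $\widetilde L_{\tilde\gamma}$ on $H^1(\Omega)$, i.e.\ the form of the Robin Laplacian with eigenvalues $\mu_k^R(\tilde\gamma)$. Hence $\lambda_k(\alpha,\gamma)/\alpha\to\mu_k^R(\tilde\gamma)$, and the convergence of the eigenprojections in $L^2(\Omega)$ follows from the (strong) resolvent convergence supplied by \cite[Theorem~7]{kato53}, the compactness of the resolvents, and a Riesz-projection contour argument, exactly as in Theorem~\ref{conv-alpha}.

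As the statement indicates, this is much simpler than the divergent-$\beta$ regime; essentially the only genuinely new point — and the step deserving the most care — is recognising that, under $\gamma=\alpha\tilde\gamma$, the boundary term belongs with the Dirichlet energy at leading order and the two together assemble precisely into the Robin Laplacian form, while the residual piece $\frac{1}{\alpha}\widetilde G_{\sigma,\beta}$ remains a genuinely subordinate perturbation, which is where the boundedness of $\beta$ enters. It is also worth making explicit that this is exactly the scaling that keeps the boundary energy at the same order as the interior gradient energy: a faster growth of $\gamma$ relative to $\alpha$ would instead produce a limiting form in which the interior and the boundary decouple, as in Theorem~\ref{thm:strange-robin}, so one should stress that under the present hypotheses the $H^2$-type form domains close up to all of $H^1(\Omega)$ and no such decoupling occurs.
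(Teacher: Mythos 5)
Your proposal is correct and takes essentially the same route as the paper: the paper's proof likewise sets $L_{\tilde\gamma}(u,v)=\int_\Omega\nabla u\cdot\nabla v\,dx+\tilde\gamma\int_{\partial\Omega}uv\,d\mathcal{H}^{d-1}(x)$ and $G=\mathcal{Q}-\alpha L_{\tilde\gamma}$ (which, under $\gamma=\alpha\tilde\gamma$, is exactly your $\widetilde G_{\sigma,\beta}$), and then repeats the argument of Theorem~\ref{conv-alpha} via \cite[Theorem 7]{kato53}, identifying the Friedrichs extension of $L_{\tilde\gamma}$ as the Robin Laplacian with form domain $H^1(\Omega)$. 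Your additional details (the harmless $L^2$-shift to make the perturbation nonnegative and the density of the KS form domain in $H^1(\Omega)$) are points the paper leaves implicit, but they do not change the approach.
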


\begin{proof}
The proof is based on the same argument as that of Theorem \ref{conv-alpha}, by redefining the operators $L$ and $G$ in \eqref{qtilde}. More precisely, we take
\begin{displaymath}
	L_{\tilde\gamma}(u,v) := \int_\Omega \nabla u \cdot \nabla v\,dx + \tilde\gamma \int_{\partial\Omega} uv\,d\mathcal{H}^{d-1}(x)
\end{displaymath}
and $G=\mathcal{Q}-\alpha L_{\tilde\gamma}$, both defined originally on $H^2(\Omega)$, so that the Friedrichs extension of the operator associated with $L$ is the Robin Laplacian with form domain $H^1(\Omega)$. We may then imitate the proof of Theorem~\ref{conv-alpha} exactly.
\end{proof}

\begin{remark}
\label{rem:alpha-nonlinear}
If instead of positing a linear relationship $\gamma = \alpha \tilde\gamma$, we assume some more complicated behaviour $\gamma = f(\alpha)$, then clearly other limit problems will be possible; for example, if $\gamma$ grows more rapidly than $\alpha$ (but $\beta$ remains bounded), then after renormalisation we may expect convergence to the eigenvalues of the Dirichlet Laplacian. We will not go into details here, but merely observe that the conclusion of Theorem \ref{thm:real-robin-convergence} continues to hold under the weaker assumption that $\gamma/\alpha\to\tilde{\gamma}$.
\end{remark}

\begin{problem}
Determine $\lim_{\alpha\to+\infty}\frac{\lambda_k(\alpha,\gamma)}{\alpha}$ under other assumptions on the relationship $\gamma = f(\alpha)$.
\end{problem}

\medskip
\textbf{The case of divergent $\beta$.} The second case, when $\beta \to +\infty$ with $\alpha$ but $\gamma$ remains bounded, is far more interesting: after renormalisation we obtain a kind of Laplacian which to the best of our knowledge has not been previously studied, where in a very particular sense there is a ``decoupling'' between the dynamics on $\Omega$ and on its boundary. In the case of sufficiently smooth boundary this actually reduces to the Neumann Laplacian, but in the less smooth case it will be more complicated; it may be studied within the framework of the \emph{$j$-elliptic operators} originally introduced in \cite{ate11,ate12} to extend the ``French approach'' to differential operators via sectorial forms (as exemplified by Lions, see \cite{lions}) to the case where the form domain is not necessarily embedded in the ambient Hilbert space (see in particular \cite[Section~2]{ate12}).

Here we will first introduce the operator at the form level and then afterwards study the case of smooth boundary, where the Total Trace Theorem (see e.g., \cite[Theorem 3.4]{laproz}) can be used to show that the associated operator is exactly the Neumann Laplacian. We will finish by showing that there really is convergence of the Robin Bilaplacian eigenvalues and eigenfunctions (in the usual sense) to the ones of this operator.

We consider the product space $H^1(\Omega) \times L^2 (\partial\Omega)$ equipped with its canonical norm and, given the Robin-type parameter $\tilde\beta \in \R$, define the quadratic form $L_{\tilde\beta}: V \times V \to \R$ by
\begin{equation}
\label{eq:weird-form}
	L_{\tilde\beta} \left((u,f),(v,g)\right) = \int_{\Omega}\nabla u\cdot \nabla v\, dx+\tilde\beta\int_{\partial\Omega}fg \,d \mathcal{H}^{d-1}(x)
\end{equation}
on the closed subspace
\begin{equation}
\label{eq:weird-form-domain}
\begin{split}
	V:= \Big\{(u,f) \in H^1(\Omega) \times L^2(\partial\Omega): \exists \varphi_n \in H^2(\Omega) \text{ such that}\\
	\varphi_n \to u \text{ in }H^1(\Omega) \text{ and }
		\frac{\partial\varphi_n}{\partial\nu} \to f \text{ in } L^2(\partial\Omega) \Big\}
\end{split}
\end{equation}
of $H^1 (\Omega) \times L^2 (\partial\Omega)$, which we recognise to be the natural (closure of the) form domain associated with the limit operator we expect to obtain in this case as $\alpha \to +\infty$, as defined and described above. We take the canonical norm on $H^1(\Omega) \times L^2(\partial\Omega)$ as our norm on $V$, which we shall denote by $\|\cdot\|_V$. We observe that, if $\Omega \in C^{2,1}$, then $V = H^1 (\Omega) \times L^2 (\partial\Omega)$ by the aforementioned Total Trace Theorem \cite[Theorem 3.4]{laproz}, whereas for general Lipschitz $\Omega$, $V$ will in general be a proper subset of $H^1(\Omega) \times L^2(\partial\Omega)$.

We shall denote by $A_{\tilde\beta}$ the operator on $L^2(\Omega)$ associated with the form $L_{\tilde\beta}$ in the sense of \cite[Theorem~2.1]{ate12}. That is, since the natural limit form domain $V$ is not a subset of $L^2(\Omega)$, we introduce the natural mapping $j : V \to L^2 (\Omega)$ given by $(u,f) \mapsto u$, which for bounded Lipschitz $\Omega$ is immediately seen to be compact and linear but not an injection ($j$ is the composition of the projection onto $H^1(\Omega)$ and the embedding of $H^1(\Omega)$ into $L^2(\Omega)$). We then define the operator $A_{\tilde \beta}: D(A_{\tilde \beta}) \subset L^2 (\Omega) \to L^2 (\Omega)$ associated with the \emph{pair} $(L_{\tilde\beta},j)$, that is, defined by the rule that $u \in D(A_{\tilde\beta})$ and $A_{\tilde\beta}u = h$ if and only if there exists an element $(u,f) \in V$ such that $j(u,f) = u$ and
\begin{equation}
\label{eq:what-form}
	L_{\tilde\beta} \left((u,f),(v,g)\right) = ( h, j(v,g) )_{L^2(\Omega)}
\end{equation}
for all $(v,g) \in V$. That $A_{\tilde\beta}$ admits a discrete spectrum, at least when $\tilde\beta > 0$, is a consequence of the general theory of $j$-elliptic forms:

\begin{lemma}
Suppose that $\tilde\beta > 0$. Then the operator $A_{\tilde\beta}$ defined above is self-adjoint with compact resolvent and semi-bounded from below. In particular, its spectrum takes the form of an increasing sequence of eigenvalues $\mu^S_1 (\tilde\beta) \leq \mu^S_2 (\tilde\beta) \leq \ldots \to +\infty$.
\end{lemma}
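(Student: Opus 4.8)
The plan is to recognise $(L_{\tilde\beta},j)$ as a $j$-elliptic form in the sense of Arendt and ter Elst \cite{ate11,ate12} and to read the conclusions off the abstract theory of \cite[Section~2]{ate12}; the hypothesis $\tilde\beta>0$ is precisely what renders the pair $(L_{\tilde\beta},j)$ $j$-elliptic, and also what makes the representing element unique despite $j$ being non-injective.

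First I would check the structural hypotheses. The space $V$ of \eqref{eq:weird-form-domain} is by construction a closed subspace of the Hilbert space $H^1(\Omega)\times L^2(\partial\Omega)$, hence itself a Hilbert space with the inherited norm $\|\cdot\|_V$; the map $j:V\to L^2(\Omega)$, $(u,f)\mapsto u$, is the composition of the bounded coordinate projection $V\to H^1(\Omega)$ with the Rellich embedding $H^1(\Omega)\hookrightarrow L^2(\Omega)$, so it is bounded and, $\Omega$ being bounded and Lipschitz, compact. It has dense range: for $\varphi\in C_c^\infty(\Omega)$ the constant sequence $\varphi_n\equiv\varphi$ witnesses $(\varphi,0)\in V$, so $j(V)\supseteq C_c^\infty(\Omega)$, which is dense in $L^2(\Omega)$. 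Finally $L_{\tilde\beta}$ is manifestly symmetric and bilinear, and bounded on $V$ by Cauchy--Schwarz, $|L_{\tilde\beta}((u,f),(v,g))|\le\max\{1,\tilde\beta\}\,\|(u,f)\|_V\|(v,g)\|_V$.

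Next I would verify $j$-ellipticity, which is where $\tilde\beta>0$ enters. For every $(u,f)\in V$,
\[
 L_{\tilde\beta}\big((u,f),(u,f)\big)+\big\|j(u,f)\big\|_{L^2(\Omega)}^2
 =\int_\Omega|\nabla u|^2\,dx+\|u\|_{L^2(\Omega)}^2+\tilde\beta\int_{\partial\Omega}f^2\,d\mathcal{H}^{d-1}
 \ge\min\{1,\tilde\beta\}\,\|(u,f)\|_V^2,
\]
so $(L_{\tilde\beta},j)$ is $j$-elliptic with $\omega=1$ and $\mu=\min\{1,\tilde\beta\}>0$. By \cite[Theorem~2.1]{ate12} the operator $A_{\tilde\beta}$ associated with $(L_{\tilde\beta},j)$ through \eqref{eq:what-form} is well defined, and since $L_{\tilde\beta}$ is symmetric it is self-adjoint and bounded below by $-1$. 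I would also record that $\tilde\beta>0$ makes the pair $(u,f)\in V$ representing a given $u\in D(A_{\tilde\beta})$ unique: if $(0,f)\in V$ satisfies the form identity with right-hand side $0$, testing against $(0,f)$ forces $\tilde\beta\|f\|_{L^2(\partial\Omega)}^2=0$.

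Finally, compactness of the resolvent. Given $h\in L^2(\Omega)$, set $u=(A_{\tilde\beta}+I)^{-1}h$ and let $(u,f)\in V$ be the associated pair; testing \eqref{eq:what-form} (with the $\omega=1$ shift) against $(u,f)$ and using the $j$-ellipticity estimate yields
\[
 \min\{1,\tilde\beta\}\,\|(u,f)\|_V^2\le L_{\tilde\beta}\big((u,f),(u,f)\big)+\|u\|_{L^2(\Omega)}^2=(h,u)_{L^2(\Omega)}\le\|h\|_{L^2(\Omega)}\|(u,f)\|_V,
\]
so the lifting $h\mapsto(u,f)$ is bounded from $L^2(\Omega)$ into $V$; composing with the compact map $j$ shows $(A_{\tilde\beta}+I)^{-1}$ is compact on $L^2(\Omega)$. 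The spectral theorem for self-adjoint, semibounded operators with compact resolvent then gives the asserted non-decreasing sequence $\mu^S_1(\tilde\beta)\le\mu^S_2(\tilde\beta)\le\ldots\to+\infty$. The proof is largely bookkeeping within the $j$-elliptic framework; the only point needing genuine care is that $j$ is not injective, so that both the well-definedness of $A_{\tilde\beta}$ and the boundedness of the lifting rely on the coercivity supplied by $\tilde\beta>0$ — which is exactly why the sign hypothesis cannot be dropped here.
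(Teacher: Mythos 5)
Your proposal is correct and follows essentially the same route as the paper: the key step in both is the $j$-ellipticity estimate $L_{\tilde\beta}((u,f),(u,f)) + \|j(u,f)\|^2_{L^2(\Omega)} \geq \min\{1,\tilde\beta\}\|(u,f)\|^2_V$ together with the compactness of $j$ and the abstract Arendt--ter Elst framework of \cite{ate12}. The only (harmless) difference is that where the paper simply cites \cite[Lemma~2.7]{ate12} for the compact resolvent and \cite[Remark~3.5]{ate12} for self-adjointness, you re-derive the compactness by hand via the bounded lifting $h \mapsto (u,f)$ composed with $j$, and verify density of $j(V)$ using $C_c^\infty(\Omega)$ rather than $H^2(\Omega)$.
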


The case $\tilde\beta \leq 0$ would require a much more extensive analysis, which we will not perform here (cf.\ Open Problem~\ref{prob:neg-tildebeta}). However, in the smooth case we will recover the conclusion of the lemma for any $\tilde\beta \in \R$ from the fact that $A_{\tilde\beta}$ is just the Neumann Laplacian (see Proposition~\ref{prop:weird-neumann}).

\begin{proof}
Fix $\tilde\beta > 0$. We first observe that by definition $L_{\tilde\beta}$ satisfies the following \emph{$j$-ellipticity estimate}:
\begin{equation}
\label{eq:j-ellipticity}
	L_{\tilde\beta} \left((u,f),(u,f)\right) + \|j(u,f)\|^2_{L^2(\Omega)} \geq \min \{1, \tilde \beta\} \|(u,f)\|^2_V
\end{equation}
for all $(u,f) \in V$. Since $j$ is compact, it follows from \cite[Lemma~2.7]{ate12} that $A_{\tilde\beta}$ has compact resolvent.

For the self-adjointness, note that $\{u \in H^1(\Omega): (u,f) \in V\}$ is certainly dense in $H^1(\Omega)$ as by choice of $V$ it contains a copy of $H^2(\Omega)$. It follows that $j(V)$ is dense in $L^2(\Omega)$. Since $L_{\tilde\beta}$ is symmetric, \cite[Remark~3.5]{ate12} implies that $A$ is self-adjoint.
\end{proof}

\begin{proposition}
\label{prop:weird-neumann}
Suppose $\Omega$ is of class $C^{2,1}$. Then, for any $\tilde\beta \in \R$, the operator $A_{\tilde\beta}$ on $L^2(\Omega)$ described above, associated with the form $L_{\tilde{\beta}}$ on $V$, is the Neumann Laplacian, given by
\begin{displaymath}
\begin{aligned}
	D(A_{\tilde{\beta}}) &= \left\{u \in H^1(\Omega): \Delta u \in L^2(\Omega),\, \frac{\partial u}{\partial\nu} = 0\right\},\\
	A_{\tilde{\beta}}u &= -\Delta u,
\end{aligned}
\end{displaymath}
where $\Delta u$ and $\frac{\partial u}{\partial\nu}$ are to be interpreted in the usual distributional sense.
\end{proposition}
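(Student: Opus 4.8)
The plan is to show that the $j$-elliptic operator $A_{\tilde\beta}$ coincides with the Neumann Laplacian by comparing their respective characterisations via forms. First I would fix $\tilde\beta \in \R$ and recall that, since $\Omega \in C^{2,1}$, the Total Trace Theorem \cite[Theorem 3.4]{laproz} gives $V = H^1(\Omega) \times L^2(\partial\Omega)$, with the trace map $u \mapsto \partial u / \partial\nu$ now realised as a bounded surjection from $H^1(\Omega)$ onto $L^2(\partial\Omega)$ in the appropriate generalised sense. The key structural observation is that the map $j : V \to L^2(\Omega)$, $(u,f) \mapsto u$, forgets the second component entirely, and the form $L_{\tilde\beta}$ decouples additively: $L_{\tilde\beta}((u,f),(v,g)) = \int_\Omega \nabla u \cdot \nabla v\,dx + \tilde\beta \int_{\partial\Omega} fg\,d\mathcal{H}^{d-1}$.

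Next I would unwind the defining identity \eqref{eq:what-form}. Suppose $u \in D(A_{\tilde\beta})$ with $A_{\tilde\beta}u = h$; then there is $(u,f) \in V$ with $j(u,f)=u$ such that $\int_\Omega \nabla u \cdot \nabla v\,dx + \tilde\beta\int_{\partial\Omega} fg = (h,v)_{L^2(\Omega)}$ for all $(v,g) \in V$. Since $V = H^1(\Omega) \times L^2(\partial\Omega)$, I may first vary $g \in L^2(\partial\Omega)$ with $v = 0$: this forces $\tilde\beta\int_{\partial\Omega} fg = 0$ for all such $g$, hence $f = 0$ (here $\tilde\beta \neq 0$; the case $\tilde\beta = 0$ is handled separately, or one notes the term simply vanishes). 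Then, varying $v \in H^1(\Omega)$ with $g = 0$, I get $\int_\Omega \nabla u\cdot\nabla v\,dx = (h,v)_{L^2(\Omega)}$ for all $v \in H^1(\Omega)$, which is precisely the weak (form) characterisation of the Neumann Laplacian: it gives $-\Delta u = h$ distributionally and $\partial u/\partial\nu = 0$ in the generalised sense, i.e.\ $u \in D(-\Delta_N)$ with $-\Delta_N u = h$. Conversely, if $u \in D(-\Delta_N)$ with $-\Delta_N u = h$, then taking $f = 0$ one recovers $(u,0) \in V$ satisfying \eqref{eq:what-form}, so $u \in D(A_{\tilde\beta})$ and $A_{\tilde\beta}u = h = -\Delta u$. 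This gives the asserted equality of operators, and in particular shows the conclusion of the preceding lemma holds for all $\tilde\beta \in \R$ in the smooth case.

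The step I expect to be the main obstacle is the careful identification of the ``generalised'' normal derivative appearing in the Total Trace Theorem with the distributional normal derivative one uses in the standard description of $D(-\Delta_N) = \{u \in H^1(\Omega): \Delta u \in L^2(\Omega),\ \partial u/\partial\nu = 0\}$, and verifying that these coincide on the relevant classes of functions. Concretely, for $u \in H^1(\Omega)$ with $\Delta u \in L^2(\Omega)$, the normal derivative $\partial u/\partial\nu$ lives a priori only in $H^{-1/2}(\partial\Omega)$ via the Green formula $\int_\Omega \nabla u\cdot\nabla v + \int_\Omega (\Delta u) v = \langle \partial u/\partial\nu, v\rangle_{\partial\Omega}$, whereas the second component $f$ of an element of $V$ lives in $L^2(\partial\Omega)$; one must check that under the $C^{2,1}$ hypothesis these two notions are consistent and that the surjectivity statement of the Total Trace Theorem is exactly what permits the ``vary $g$ alone'' argument above. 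Once this compatibility is in place, the remainder is a routine bookkeeping of integration by parts, which I would only sketch. I would also briefly remark that the $\tilde\beta = 0$ case is degenerate only superficially: the boundary term drops out of the form but $V$ is unchanged, and the same computation with the $g$-variation simply yields no constraint on $f$, which is irrelevant since $j$ ignores $f$ and the resulting operator is still $-\Delta_N$.
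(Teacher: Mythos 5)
Your proposal is correct and follows essentially the same route as the paper's proof: using the Total Trace Theorem to identify $V = H^1(\Omega)\times L^2(\partial\Omega)$, testing the defining identity \eqref{eq:what-form} against pairs $(v,g)$ to reduce to the weak Neumann formulation $\int_\Omega \nabla u\cdot\nabla v\,dx = \int_\Omega hv\,dx$ for all $v\in H^1(\Omega)$, and invoking the variational definition of $\frac{\partial u}{\partial\nu}$, with the converse handled by taking the boundary component $f=0$. Your explicit step showing $f=0$ by varying $g$ (and your remark on $\tilde\beta=0$) simply spells out what the paper leaves implicit when it asserts that \eqref{eq:what-form} holds for $(u,0)$, and the compatibility of traces you flag as a potential obstacle is exactly the standard variational notion the paper uses, so no genuine difficulty remains.
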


In particular, in this case, for all $\tilde\beta \in \R$ the operator $A_{\tilde{\beta}}$ is self-adjoint with compact resolvent, and we trivially have $\mu_k^S (\tilde{\beta}) = \mu_k^N$ for all $k\geq 1$.

\begin{proof}
We start by recalling that in this case the form domain $V$ introduced in \eqref{eq:weird-form-domain} is equal to the whole of $H^1(\Omega) \times L^2(\partial\Omega)$.

We now fix $u \in D(A_{\tilde\beta})$ and suppose that $A_{\tilde{\beta}}u = h \in L^2(\Omega)$. We first observe that $h=-\Delta u$ in $L^2(\Omega)$; indeed, taking $v \in H^1_0 (\Omega)$ and $g = 0$ in \eqref{eq:what-form}, we obtain
\begin{displaymath}
	\int_\Omega \nabla u \cdot \nabla v\, dx = \int_\Omega hv\,dx
\end{displaymath}
for all $v \in H^1_0(\Omega)$, whence the claim. To show that $\frac{\partial u}{\partial\nu} = 0$ distributionally, we claim that \eqref{eq:what-form} holds for the element $(u,0) \in V =  H^1(\Omega) \times L^2(\partial\Omega)$ (note that in general this will only be an element of $V$ in the smooth case). Then \eqref{eq:what-form} means exactly that
\begin{equation}
\label{eq:neumann-identity}
	\int_\Omega \nabla u\cdot \nabla v\,dx = -\int_\Omega \Delta u v\,dx
\end{equation}
for all $v \in H^1(\Omega)$. But we now recall the variational definition of $\frac{\partial u}{\partial \nu}$: this is, by definition, the function $\varphi$ in $L^2(\partial\Omega)$, if it exists, such that
\begin{displaymath}
	\int_\Omega \nabla u\cdot \nabla v\,dx + \int_{\partial\Omega} \varphi v\,d\mathcal{H}^{d-1}(x) = -\int_\Omega \Delta u v\,dx
\end{displaymath}
for all $v \in H^1(\Omega)$. We thus see that indeed $\frac{\partial u}{\partial \nu} = 0$.

Conversely, suppose $u \in H^1(\Omega)$ satisfies $\Delta u \in L^2(\Omega)$ and $\frac{\partial u}{\partial\nu} = 0$; we wish to show that $u \in D(A_{\tilde{\beta}})$ and $A_{\tilde{\beta}}u = -\Delta u$. Now by assumption and the variational definition of $\Delta u$, $u$ satisfies \eqref{eq:neumann-identity} for all $v \in H^1(\Omega)$. But this says exactly that
\begin{displaymath}
	L_{\tilde{\beta}} ((u,0),(v,g)) = (-\Delta u,j(v,g))_{L^2(\Omega)}
\end{displaymath}
for all $(v,g) \in V$; hence, by definition (cf.\ \eqref{eq:what-form}), $u \in D(A_{\tilde{\beta}})$ and $A_{\tilde{\beta}}u = -\Delta u$.
\end{proof}

We can finally turn to our main convergence result.

\begin{theorem}
\label{thm:strange-robin}
Let $\gamma$ be bounded, and let $\beta=\alpha\tilde\beta$, where we assume either that $\tilde\beta > 0$ or that $\Omega \in C^{2,1}$ and $\tilde\beta \in \R$. Also let $\lambda_k(\alpha,\beta)$ be the $k$-th eigenvalue of either the Full Robin problem \eqref{robin1strong} or the NR problem \eqref{navier-robin}. Then
\begin{equation*}
\lim_{\alpha\to+\infty}\frac{\lambda_k(\alpha,\beta)}{\alpha}=\mu^S_k (\tilde\beta),
\end{equation*}
with convergence of the corresponding eigenprojections in $L^2(\Omega)$.
\end{theorem}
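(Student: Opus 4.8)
The plan is to mimic the strategy already used for Theorems~\ref{conv-alpha} and~\ref{thm:real-robin-convergence}, namely to renormalise the family $\mathcal Q_{\Omega,\sigma,\alpha,\beta,\gamma}$ by $\alpha$ and recognise the result as a perturbation $\widetilde{\mathcal Q} = L_{\tilde\beta} + \frac1\alpha G$ of the ``weird'' form $L_{\tilde\beta}$ defined in \eqref{eq:weird-form}. The subtlety — which distinguishes this case from the previous two — is that $L_{\tilde\beta}$ lives on the space $V$ of \eqref{eq:weird-form-domain}, which is \emph{not} a subspace of $L^2(\Omega)$, so $G$ must be reinterpreted appropriately and the convergence theorem of Kato cannot be quoted verbatim. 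First I would set up the $j$-elliptic framework: with $j:V\to L^2(\Omega)$, $(u,f)\mapsto u$, the pair $(\frac1\alpha\mathcal Q,j)$ — where $\mathcal Q$ is pulled back to $V$ via the map $(u,f)\mapsto u$ on the dense copy of $H^2(\Omega)$ inside $V$, remembering that on $H^2(\Omega)$ one has $\frac{\partial u}{\partial\nu}=f$ — gives a self-adjoint operator on $L^2(\Omega)$ with compact resolvent, for each $\alpha$, by Theorem~\ref{lem:robin-form} and the $j$-ellipticity argument already used in the Lemma preceding Proposition~\ref{prop:weird-neumann}. I would then observe that, as $\alpha\to+\infty$, the forms $\frac1\alpha\mathcal Q$ converge to $L_{\tilde\beta}$ monotonically from above on the common core $H^2(\Omega)$.

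The key analytic step is to upgrade this monotone convergence of forms to norm-resolvent convergence of the associated operators, and hence to convergence of eigenvalues and eigenprojections. Here I would invoke the $j$-elliptic perturbation theory of \cite{ate11,ate12} (or, in the smooth case $\Omega\in C^{2,1}$ where $V = H^1(\Omega)\times L^2(\partial\Omega)$ and $A_{\tilde\beta}$ is just the Neumann Laplacian by Proposition~\ref{prop:weird-neumann}, the classical theory \cite[Theorem~7]{kato53} applied directly). The mechanism is Mosco-type: the uniform $j$-ellipticity estimate \eqref{eq:j-ellipticity} — which holds uniformly in $\alpha$ once $\alpha\geq 1$, since the $G$-term contributes non-negatively for $\sigma\in(-\frac1{d-1},1)$ and $\gamma$ bounded (after the usual harmless shift making $\gamma\geq0$ too, cf.\ the proof of Theorem~\ref{lem:robin-form}) — yields that any sequence $(u_\alpha,f_\alpha)\in V$ with $\|j(u_\alpha,f_\alpha)\|_{L^2}=1$ and $\frac1\alpha\mathcal Q(u_\alpha)$ bounded is bounded in $V$, hence has a weakly convergent subsequence with limit again in $V$; lower semicontinuity of $L_{\tilde\beta}$ along such sequences then gives the liminf inequality, and the recovery sequence is trivial since the forms decrease to $L_{\tilde\beta}$. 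This is structurally the same argument as Lemma~\ref{lem:mosco-form}, now in the $j$-elliptic setting, and it delivers $\lambda_k(\alpha,\beta)/\alpha\to\mu_k^S(\tilde\beta)$ together with convergence of the $L^2$-eigenprojections.

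Concretely I would organise the proof as follows: (i) fix $\sigma,\gamma$ and, after a shift, assume $\gamma\geq0$; set $\widetilde A_\alpha$ for the operator on $L^2(\Omega)$ associated with the pair $(\frac1\alpha\mathcal Q|_{V},j)$, noting its spectrum is exactly $\{\lambda_k(\alpha,\beta)/\alpha\}_k$ by \eqref{eq:lambda-mu}-type rescaling; (ii) check the uniform $j$-ellipticity bound for $\alpha\geq1$; (iii) in the case $\tilde\beta>0$ or $\Omega\in C^{2,1}$, verify that $L_{\tilde\beta}$ itself satisfies \eqref{eq:j-ellipticity} so that $A_{\tilde\beta}$ is well defined (the Lemma before Proposition~\ref{prop:weird-neumann}, or Proposition~\ref{prop:weird-neumann} in the smooth case); (iv) run the Mosco-type compactness-and-lower-semicontinuity argument to conclude $\mu_k(\widetilde A_\alpha)\to\mu_k(A_{\tilde\beta})$ and eigenprojection convergence, exactly as in the proof of Lemma~\ref{lem:mosco-form}; (v) unravel the rescaling to obtain the stated limit. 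For the Full Robin problem one takes the core $H^2(\Omega)$ and the full $V$; for the NR problem one takes the core $H^2(\Omega)\cap H^1_0(\Omega)$ and the corresponding subspace of $V$ cut out by $u|_{\partial\Omega}=0$, and the argument is identical.

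The main obstacle I anticipate is handling the non-embedded form domain $V$ cleanly: one must be careful that the pullback of $\mathcal Q$ to $V$ is well defined and lower semicontinuous with respect to \emph{weak} convergence in $V = H^1(\Omega)\times L^2(\partial\Omega)$ — the boundary term $\beta\int_{\partial\Omega}|\partial u/\partial\nu|^2 = \alpha\tilde\beta\int_{\partial\Omega}|f|^2$ is weakly lower semicontinuous in $f$, which is fine, but the $D^2u$-term is only controlled along sequences for which $\frac1\alpha\mathcal Q(u_\alpha)$ stays bounded, so one genuinely uses that the coefficient of the $H^2$-seminorm is $\tfrac1\alpha\to0$ and that the limit need only lie in $V$ (which records $u\in H^1$ and $f\in L^2(\partial\Omega)$, not $u\in H^2$). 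Making this interplay precise, and confirming that the abstract $j$-elliptic convergence results of \cite{ate11,ate12} apply in exactly this monotone setting (or else supplying the short direct Mosco argument), is where the real work lies; everything else is bookkeeping parallel to the proofs of Theorems~\ref{conv-alpha} and~\ref{thm:real-robin-convergence}.
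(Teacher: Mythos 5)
Your plan is sound and broadly parallel to the paper's, but the mechanism you propose is noticeably different in its technical heart. The paper's proof is a one-paragraph reduction to Kato's Theorem 7, exactly as in Theorem~\ref{conv-alpha}: it introduces $L^0_{\tilde\beta}(u,v) := L_{\tilde\beta}(\pi(u),\pi(v))$ with $\pi(u)=(u,\partial_\nu u)$, writes $G = \mathcal{Q}-\alpha L^0_{\tilde\beta}$, and then makes the (short but crucial) observation that the closure of $H^2(\Omega)$ in the form norm $L^0_{\tilde\beta}+\omega\|\cdot\|^2_{L^2}$ is precisely $V$, so that the Friedrichs extension of $L^0_{\tilde\beta}$ \emph{is} $A_{\tilde\beta}$. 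Because Kato's theorem handles non-closable limit forms through exactly this Friedrichs extension, the convergence then follows verbatim as before, uniformly for both the $\tilde\beta>0$ case and the smooth case.

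What you propose instead is to bypass Kato's theorem in the non-smooth case and run a direct Mosco-type compactness-and-lower-semicontinuity argument in the $j$-elliptic setting, essentially an extension of Lemma~\ref{lem:mosco-form} where the ambient space for the weak limit is $V\subset H^1(\Omega)\times L^2(\partial\Omega)$ rather than a subspace of $L^2(\Omega)$. This would work: the uniform lower bound for $\alpha\geq1$ (with $\gamma$ shifted to be nonnegative and $\tilde\beta>0$) does give boundedness of $(u_\alpha,\partial_\nu u_\alpha)$ in $H^1\times L^2(\partial\Omega)$, the weak limit lies in the closed subspace $V$, the $L_{\tilde\beta}$-term is weakly lower semicontinuous, and the recovery sequence is the identity (since the forms decrease monotonically). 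The trade-off is that your route requires writing out and verifying a new $j$-elliptic version of the Mosco lemma, whereas the paper's route requires only the identification $\overline{\pi(H^2(\Omega))}^{\,L^0_{\tilde\beta}\text{-norm}}=V$, after which Kato's theorem does all the work in both regularity regimes at once. You also slightly over-engineer the operators $\widetilde A_\alpha$: for each finite $\alpha$ the form $\tfrac1\alpha\mathcal{Q}$ lives on $H^2(\Omega)\subset L^2(\Omega)$, so these are ordinary (not $j$-elliptic) operators, and the $j$-elliptic structure is only needed in the limit. Framing it as the paper does — identify the Friedrichs extension, then apply the known theorem — keeps this distinction tidy and makes the proof shorter, but your approach is essentially correct and would yield the same conclusion.
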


\begin{proof}
The proof is again based on the same argument as in Theorem \ref{conv-alpha}. Let $\pi: H^2(\Omega)\to V\subseteq H^1(\Omega)\times L^2(\partial\Omega)$ be the standard projection $\pi(u)=(u,\partial_\nu u)$. We set $L^0_{\tilde{\beta}}(u,v):=L_{\tilde{\beta}}(\pi(u),\pi(v)) $, where $L_{\tilde\beta}$ is as in \eqref{eq:weird-form}, and $G=\mathcal{Q}-\alpha L^0_{\tilde\beta}$ on $H^2(\Omega)$. In this case, the closure of $H^2(\Omega)$ with respect to the form $L^0_{\tilde\beta} + \omega (\cdot,\cdot)_{L^2(\Omega)}$ (for sufficiently large $\omega \geq 0$) is immediately seen to be the space $V$ from \eqref{eq:weird-form-domain}, meaning that $L_{\tilde{\beta}}$ is the Friedrichs extension of $L^0_{\tilde{\beta}}$. Note also that all operators are defined on the same space $L^2(\Omega)$. The proof of Theorem~\ref{conv-alpha} may now be repeated verbatim, invoking \cite[Theorem 7]{kato53} to prove the claimed convergence as $\alpha \to +\infty$.
\end{proof}

\begin{problem}
\label{prob:neg-tildebeta}
Investigate the operator $A_{\tilde\beta}$, and hence the limit in Theorem~\ref{thm:strange-robin}, in the non-smooth case, $\Omega \not\in C^{2,1}$, if $\tilde\beta \leq 0$.
\end{problem}

In this context we observe that for negative $\tilde\beta$ the ellipticity estimate \eqref{eq:j-ellipticity} breaks down irreparably; indeed, for $(u,f) \in V$ the functions $u$ and $f$ are no longer completely decoupled, as there must be an interaction at all points where the boundary is not locally smooth. In this case it is no longer clear \emph{a priori} whether $A_{\tilde\beta}$ is bounded below, or has compact resolvent; understanding this operator would require a more careful study of the form domain $V$ and thus the behaviour of higher-order traces on less smooth domains. It would be particularly interesting to understand how the eigenvalues of the Robin Bilaplacian behave in the corresponding limit; however, we will not explore this question further here.

\begin{remark}
Analogously to Remark~\ref{rem:alpha-nonlinear}, Theorem~\ref{thm:strange-robin} continues to hold if $\gamma$ remains bounded and $\beta/\alpha$ only behaves asymptotically like $\tilde{\beta}$.
\end{remark}

\begin{problem}
Determine $\lim_{\alpha\to+\infty}\frac{\lambda_k(\alpha,\beta)}{\alpha}$ in the case of a more general relationship $\beta = f(\alpha)$. Positing a power relationship $\beta \sim \alpha^\theta$, for what values of $\theta \geq 0$ do we have convergence to the Neumann Laplacian in the smooth case?
\end{problem}


\section{Divergence for large negative values of the parameters}
\label{sec:num-range}

In this section we analyse the behaviour of the eigenvalues when either of the parameters $\alpha$, $\beta$, or $\gamma$ approach minus infinity, while the others stay fixed (or bounded). Note that, since in this regime the eigenvalues diverge in a different manner depending on the diverging parameter, the possibility of ``joint divergence'' may lead to very intricate behaviour.

\medskip
\textbf{Main divergence results.}
We start with the following  basic result, which states that if any parameter diverges to $-\infty$, then there is a sequence of divergent eigenvalues.
We recall the convention that $\frac{1}{\infty} = 0$ and also take the standard conventions $0 \cdot \infty = 0$, $\min \{0,+\infty \} = 0$ etc.

\begin{theorem}
\label{thm:steklovbuckling}
Let $\Omega \subset \mathbb R^d$ be a bounded Lipschitz domain and suppose that $\sigma \in (-\frac{1}{d-1},1)$ is fixed.
\begin{itemize}
\item[(a)] For any $\beta,\gamma \in \R \cup \{+\infty\}$ and for any $k\in\mathbb N$, $\lambda_k (\Omega,\sigma,\alpha,\beta,\gamma)\to-\infty$ as $\alpha\to-\infty$.
\item[(b)] For any $\alpha \in \R$, for any $\gamma \in \R \cup \{+\infty\}$, and for any $k\in\mathbb N$, $\lambda_k (\Omega,\sigma,\alpha,\beta,\gamma)\to-\infty$ as $\beta\to-\infty$.
\item[(c)]  For any $\alpha \in \R$, for any $\beta\in \R \cup \{+\infty\}$, and for any $k\in\mathbb N$, $\lambda_k (\Omega,\sigma,\alpha,\beta,\gamma)\to-\infty$ as $\gamma\to-\infty$.
\end{itemize}
\end{theorem}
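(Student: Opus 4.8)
The plan is to bound $\lambda_k$ from above via the min-max characterisation \eqref{minmax}: in each of the three cases I will produce, once and for all, a $k$-dimensional subspace $V$ of the relevant form domain along which the Rayleigh quotient $\mathcal{Q}(u,u)/\|u\|_2^2$ on $V\setminus\{0\}$ tends to $-\infty$ as the corresponding parameter $\to-\infty$; since \eqref{minmax} gives $\lambda_k\le\max_{0\ne u\in V}\mathcal{Q}(u,u)/\|u\|_2^2$, the conclusion follows. (By the monotonicity in Theorems~\ref{lem:continuity-monotonicity} and~\ref{thm:positive-convergence-1} the relevant limits exist in $[-\infty,+\infty)$, so it suffices to force the limit to be $-\infty$, and in fact the same $V$ may be kept fixed throughout.) The point in each case is simply that a single boundary or interior term of \eqref{qf} can be made to dominate all the others on a fixed finite-dimensional space, where all the quadratic forms involved are mutually comparable.

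For (a), choose any fixed $k$-dimensional $V\subset C^\infty_c(\Omega)\subset H^2_0(\Omega)$; since $H^2_0(\Omega)$ is contained in every form domain and $u|_{\partial\Omega}=\partial_\nu u|_{\partial\Omega}=0$ on it, both boundary integrals in \eqref{qf} vanish for all admissible $\beta,\gamma\in\R\cup\{+\infty\}$, and $\mathcal{Q}(u,u)=\int_\Omega(1-\sigma)|D^2u|^2+\sigma|\Delta u|^2\,dx+\alpha\int_\Omega|\nabla u|^2\,dx$. On the finite-dimensional $V$ the three quantities $\int_\Omega(1-\sigma)|D^2u|^2+\sigma|\Delta u|^2\,dx$ (comparable to $\|D^2u\|_2^2$ for $\sigma\in(-\tfrac1{d-1},1)$ by \eqref{coerciveeq}), $\int_\Omega|\nabla u|^2\,dx$ and $\|u\|_2^2$ are all equivalent norms -- no nonzero element of $V$ is affine or constant -- so there are constants $c_V,C_V>0$ with $\int_\Omega|\nabla u|^2\,dx\ge c_V\|u\|_2^2$ and $\int_\Omega(1-\sigma)|D^2u|^2+\sigma|\Delta u|^2\,dx\le C_V\|u\|_2^2$ on $V$. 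Hence $\mathcal{Q}(u,u)\le(C_V+\alpha c_V)\|u\|_2^2$ for $\alpha<0$ and $u\in V$, so $\lambda_k\le C_V+\alpha c_V\to-\infty$.

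Parts (b) and (c) follow the same scheme with the diverging boundary term in the dominant role. For (b) I need a fixed $k$-dimensional $V$ in the form domain on which $\int_{\partial\Omega}|\partial_\nu u|^2\,d\mathcal{H}^{d-1}\ge c_V\|u\|_2^2$; all remaining terms of \eqref{qf} are then automatically $\le C_V(1+|\alpha|+|\gamma|)\|u\|_2^2$ on the finite-dimensional $V$, whence $\mathcal{Q}(u,u)\le(C_V(1+|\alpha|+|\gamma|)+\beta c_V)\|u\|_2^2\to-\infty$ as $\beta\to-\infty$. Such a $V$ exists as soon as the map $u\mapsto\partial_\nu u|_{\partial\Omega}$ has at least $k$-dimensional range on the form domain: if $u_1,\dots,u_k$ have linearly independent traces $\partial_\nu u_i|_{\partial\Omega}\in L^2(\partial\Omega)$ then $u\mapsto\|\partial_\nu u\|_{2,\partial\Omega}$ is a norm on $V=\Span\{u_1,\dots,u_k\}$, hence $\ge c_V\|u\|_2$ by comparability of norms on $V$. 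In the Full Robin case ($\gamma\in\R$, form domain $H^2(\Omega)$) this is elementary -- take $k$ bump functions in $C^\infty(\overline\Omega)$ with pairwise disjoint supports, each with nonzero normal derivative on a small boundary patch. In the Navier--Robin case ($\gamma=+\infty$, form domain $H^2(\Omega)\cap H^1_0(\Omega)$) one needs $k$ functions with $u_i|_{\partial\Omega}=0$ but linearly independent $\partial_\nu u_i|_{\partial\Omega}$; their existence amounts to the normal-derivative trace map on $H^2(\Omega)\cap H^1_0(\Omega)$ having infinite-dimensional range, which follows from the trace theory of $H^2$-functions on Lipschitz domains (cf.\ the total trace theorem, \cite{laproz}, and \cite[Theorem~2.20]{ggs}). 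Part (c) is identical with $\int_{\partial\Omega}|u|^2\,d\mathcal{H}^{d-1}$ in place of $\int_{\partial\Omega}|\partial_\nu u|^2\,d\mathcal{H}^{d-1}$: in the Full Robin case use disjoint bumps that are $\equiv1$ near a boundary patch, and in the Kuttler--Sigillito case ($\beta=+\infty$, form domain $\{u\in H^2(\Omega):\partial_\nu u|_{\partial\Omega}=0\}$) use the infinite-dimensionality of $\{u|_{\partial\Omega}:u\in H^2(\Omega),\ \partial_\nu u|_{\partial\Omega}=0\}$, again from trace theory.

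The main obstacle is precisely this last point. On a general bounded Lipschitz domain the boundary need not contain any $C^1$ patch, and the natural boundary-layer test functions $u(x',t)=\phi(x')f(t/h)$ in local coordinates -- which are also what deliver, after optimising over the layer width $h$, the sharp rates $\lambda_k\sim-|\beta|^4$ as $\beta\to-\infty$ and $\lambda_k\sim-|\gamma|^{4/3}$ as $\gamma\to-\infty$ announced in the introduction (in the spirit of \cite{dk10}) -- need not lie in $H^2(\Omega)$, since their second derivatives pick up the distributional second derivatives of the Lipschitz graph defining $\partial\Omega$. For the present qualitative statement this is bypassed by keeping $V$ fixed and invoking the trace results above; equivalently, one may phrase everything through the associated Steklov-type problems, with $-\beta$ (respectively $-\gamma$) playing the role of the spectral parameter -- the ``Steklov--buckling'' link reflected in the name of the theorem -- in which case $\lambda_k\to-\infty$ becomes the statement that the relevant biharmonic Steklov spectrum is an infinite sequence diverging to $+\infty$; but establishing that this Steklov spectrum is infinite reduces once more to the infinite-dimensionality of the same boundary-trace spaces.
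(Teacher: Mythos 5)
Your overall mechanism is sound: feed a fixed $k$-dimensional test space into \eqref{minmax} and let the divergent term dominate, using that all the continuous quadratic forms involved are mutually comparable on a finite-dimensional space. This is a genuinely different route from the paper, which instead takes as test space the span of the first $k$ eigenfunctions of auxiliary buckling-type and Steklov-type problems (\eqref{bucklingstrong}, \eqref{steklov1}, \eqref{steklov2}) and quotes the existence of an infinite divergent sequence of eigenvalues for those problems (from \cite{bula2013} and \cite{lapro}). For part (a), and for parts (b) and (c) in the sub-cases where the form domain is all of $H^2(\Omega)$ (i.e.\ $\gamma\in\R$ in (b), $\beta\in\R$ in (c)), your argument is complete and in fact more elementary and self-contained than the paper's: compactly supported functions annihilate both boundary integrals in \eqref{qf} for every admissible $\beta,\gamma$ including $+\infty$, and the disjoint-bump construction works on a Lipschitz domain (up to the routine remark that one must arrange $\partial_\nu u_i\neq 0$ on a set of positive surface measure, which only uses $|\nu|=1$ a.e.).

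The gap is in the intermediate sub-cases that the statement explicitly includes: $\gamma=+\infty$ in (b) (form domain $H^2(\Omega)\cap H^1_0(\Omega)$) and $\beta=+\infty$ in (c) (form domain $\{u\in H^2(\Omega):\ \partial_\nu u=0\ \text{on}\ \partial\Omega\}$). There your proof requires, for every $k$, $k$ linearly independent normal-derivative traces of $H^2\cap H^1_0$-functions, respectively $k$ linearly independent Dirichlet traces of $H^2$-functions with vanishing normal derivative, on a general bounded \emph{Lipschitz} domain. The justification you offer does not establish this: the Total Trace Theorem of \cite{laproz} is used in the paper only for $C^{2,1}$ domains, and \cite[Theorem 2.20]{ggs} is elliptic regularity on smooth domains. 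On a genuinely Lipschitz boundary the difficulty is real, as you yourself observe: the natural local constructions (multiplying by $g(x')-x_d$ in graph coordinates, or boundary-layer profiles) fail to be in $H^2$ because their second derivatives see the distributional second derivatives of the graph, while, for instance, powers of a regularized distance that do lie in $H^2$ have vanishing normal trace and so produce nothing. Your closing suggestion that the issue is bypassed ``by invoking the trace results above'', or equivalently by the infinitude of the associated Steklov spectra, is circular, since (as you concede) that infinitude is exactly the infinite-dimensionality of the same constrained trace spaces. This is precisely the nontrivial input the paper obtains from the spectral theory of the Steklov problems \eqref{steklov1} and \eqref{steklov2} in \cite{lapro}; to complete your proof in these sub-cases you must either quote such a result valid for Lipschitz domains or supply an actual construction of the required functions.
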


Theorem \ref{thm:steklovbuckling} is valid under minimal regularity assumptions as it can be proved using properties of the associated Steklov eigenvalue problems as well as buckling-type eigenvalue problems. We provide a simple proof below.

We observe that Theorem~\ref{thm:steklovbuckling} gives no information about the rate of divergence of the eigenvalues, which we obtain by assuming that the domain $\Omega$ has some more regularity. Nevertheless, we expect similar rates to hold in general (cf.\ Open Problems \ref{prob:neg-div-more-precise}-\ref{prob:simple-study}).

\begin{theorem}
\label{thm:divergence-basic}
Let $\Omega \subset \mathbb R^d$ be a bounded $C^1$-domain and suppose that $\sigma \in (-\frac{1}{d-1},1)$ is fixed. Then, for each fixed $k \in \N$, and in each case with the respective other variables bounded,
\begin{itemize}
\item[(a)] $\displaystyle \lambda_k (\Omega,\sigma,\alpha,\beta,\gamma) \asymp -|\alpha|^2 \quad \text{as } \alpha \to -\infty$;
\item[(b)] $\displaystyle \lambda_k (\Omega,\sigma,\alpha,\beta,\gamma) \asymp -|\beta|^4 \quad \text{as } \beta \to -\infty$;
\item[(c)] $\displaystyle \lambda_k (\Omega,\sigma,\alpha,\beta,\gamma) \asymp -|\gamma|^{4/3} \quad \text{as } \gamma \to -\infty$.
\end{itemize}
In each case, for $k=1$ the limit is uniform in the other two variables, as long as these each remain within a compact interval.
\end{theorem}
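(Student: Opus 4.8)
The plan is to prove each of the three asymptotic relations $\lambda_k \asymp -|\alpha|^2$, $\lambda_k \asymp -|\beta|^4$, $\lambda_k \asymp -|\gamma|^{4/3}$ by establishing matching upper and lower bounds, with the upper bounds (i.e.\ $\lambda_k \leq -c|\cdot|^p$) coming from test-function constructions in the min-max principle \eqref{minmax}, and the lower bounds (i.e.\ $\lambda_k \geq -C|\cdot|^p$) coming from estimates on the numerical range of the operator. Since the three statements are structurally parallel, I would treat one in detail and indicate the modifications needed for the others; the case $k=1$ and the case of general $k$ require slightly different bookkeeping, so I would first do $k=1$ and then upgrade.

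For the \textbf{upper bounds}, the idea — following \cite{dk10} — is to build localised test functions concentrated near $\partial\Omega$. Fix a point $x_0 \in \partial\Omega$ and, using the $C^1$-regularity, choose local coordinates in which the boundary is (approximately) flat. For case (b) one takes a function of the form $u(x) = \chi(x') \, \varphi(x_d/\delta) e^{i \beta^2 x_d}$ or, more simply, a real profile $\varphi$ supported in a boundary layer of width $\sim |\beta|^{-2}$ decaying into $\Omega$, chosen so that $\partial u/\partial\nu$ is large on $\partial\Omega$; then $\mathcal{Q}(u,u)$ is dominated by the term $\beta \int_{\partial\Omega} |\partial u/\partial\nu|^2$, which is negative (for $\beta<0$) and of order $-|\beta|^4 \|u\|_2^2$ once the scaling is optimised, while the competing $\|D^2 u\|_2^2$ term is of the same order $|\beta|^4$ but with a controllable constant, so for $|\beta|$ large the quotient is $\leq -c|\beta|^4$. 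For case (c) the profile should \emph{not} decay as a pure exponential but rather be chosen so that $u$ itself (not its normal derivative) is large on the boundary while $\partial u/\partial\nu$ is small there; optimising the boundary-layer width against the competition between $\gamma \int_{\partial\Omega} u^2 \sim -|\gamma|\,\delta^{-1}$ and $\|D^2 u\|_2^2 \sim \delta^{-4}$ gives the balance $\delta \sim |\gamma|^{-1/3}$ and the rate $-|\gamma|^{4/3}$. For case (a) one uses instead an oscillating interior test function $u(x) = e^{i\sqrt{|\alpha|}\,\xi\cdot x}\chi(x)$ with $|\xi|=1$, for which $-\alpha\int |\nabla u|^2 \sim -|\alpha|^2 \|u\|_2^2$ dominates $\|D^2 u\|_2^2 \sim |\alpha|^2 \|u\|_2^2$ only after noting $\|D^2 u\|_2^2 \to |\alpha|^2\|u\|_2^2$ too; the cancellation here is more delicate and one actually exploits $(1-\sigma)\|D^2u\|_2^2 + \sigma\|\Delta u\|_2^2 - \alpha\|\nabla u\|_2^2$, which for a plane wave with wavevector of length $\sqrt{|\alpha|}$ and $\alpha<0$ evaluates to $(|\alpha|^2 - |\alpha|\cdot|\alpha|)\|u\|_2^2 + (\text{boundary contributions})$, so one must be slightly cleverer — e.g.\ use wavevectors of length slightly larger than $\sqrt{|\alpha|}/\sqrt{2}$ or exploit the Navier case as in \cite{abf19} — to extract the leading coefficient $-1/4$, though for the present statement (only $\asymp$) it suffices that the quotient be $\leq -c|\alpha|^2$ for some $c>0$, which is easy. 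To get general $k$ one places $k$ such bumps at $k$ distinct boundary points (or, in case (a), takes $k$ linearly independent plane waves) with disjoint supports, yielding a $k$-dimensional subspace on which the Rayleigh quotient is $\leq -c|\cdot|^p$; this handles the min-max.

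For the \textbf{lower bounds}, I would generalise the numerical-range argument of \cite[Section~6]{bkl19}. The key is a quantitative interpolation-type inequality controlling the boundary terms and the $\nabla u$ term by the leading $H^2$-seminorm and $\|u\|_2$: for every $\varepsilon>0$,
\begin{displaymath}
\left\|\frac{\partial u}{\partial\nu}\right\|_{2,\partial\Omega}^2 \leq \varepsilon \|D^2 u\|_2^2 + \frac{C}{\varepsilon^{1/3}}\|u\|_2^2, \qquad \|u\|_{2,\partial\Omega}^2 \leq \varepsilon\|D^2 u\|_2^2 + \frac{C}{\varepsilon^{1/3}}\|u\|_2^2,
\end{displaymath}
(with the exponent on $1/\varepsilon$ reflecting the scaling weight of the respective trace — $1/3$ for $u|_{\partial\Omega}$, a different power for $\partial u/\partial\nu$, and for $\|\nabla u\|_2^2 \leq \varepsilon\|D^2 u\|_2^2 + C\varepsilon^{-1}\|u\|_2^2$), these being obtained on a $C^1$-domain by a partition of unity, local flattening, and the corresponding half-space estimates (Fourier transform in the tangential variables, or a direct scaling of the Ehrling inequality used in the proof of Theorem~\ref{lem:robin-form}, now with explicit $\varepsilon$-dependence). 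Plugging these into $\mathcal{Q}(u,u)$ and using \eqref{h2equivnorm}, one gets $\mathcal{Q}(u,u) \geq \tfrac12\big[(1-\sigma)\|D^2u\|_2^2 + \sigma\|\Delta u\|_2^2\big] - (\text{const depending on }\alpha,\beta,\gamma)\|u\|_2^2$ where optimising the free $\varepsilon$ against the size of $|\beta|$ (resp.\ $|\gamma|$, resp.\ $|\alpha|$) produces exactly the constant $-C|\beta|^4$ (resp.\ $-C|\gamma|^{4/3}$, $-C|\alpha|^2$). Since this bounds $\mathcal{Q}(u,u)/\|u\|_2^2$ from below for \emph{all} $u$ in the form domain, it bounds $\lambda_1$ (and hence $\lambda_k \geq \lambda_1$ gives nothing, but the same argument applied on subspaces, or simply the fact that only finitely many eigenvalues can lie below any given level by the discreteness of the spectrum, combined with a refined version of the estimate, yields the bound for each fixed $k$). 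Uniformity for $k=1$ as the other two parameters range over a compact interval is automatic because the constants in the trace inequalities depend only on $\Omega$ and the bounds are continuous in $(\alpha,\beta,\gamma)$.

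The \textbf{main obstacle} I anticipate is making the constants in the interpolation/trace inequalities \emph{explicit in $\varepsilon$} on a merely $C^1$ (not $C^2$ or smooth) domain, since the standard Ehrling argument gives no rate; one really needs the scaling structure of the half-space estimates transported via the $C^1$-charts, and checking that the distortion introduced by a $C^1$ (not Lipschitz-gradient) change of variables does not destroy the power of $\varepsilon$ requires some care. A secondary subtlety is the upper bound in case (a): extracting even a \emph{negative} leading term of order $|\alpha|^2$ requires handling the near-cancellation between $\|D^2u\|_2^2$ and $|\alpha|\|\nabla u\|_2^2$ for oscillatory test functions, so one should either localise in frequency away from the critical sphere or borrow the Navier-problem comparison of \cite{abf19}; the precise coefficient $-1/4$ is not needed here and is flagged as Remark~\ref{rem:big-neg-alpha}, so the $\asymp$ statement is safe.
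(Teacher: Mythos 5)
Your high-level strategy coincides with the paper's: couple an upper bound from a test-function argument (in the spirit of \cite{dk10}) with a lower bound from a numerical-range estimate (generalising \cite[Section~6]{bkl19}). That is exactly the split into Theorem~\ref{thm:divergence-1} and Theorem~\ref{thm:num-range-eig-div}. But several of the details in your sketch are off, and one is inverted.

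\textbf{Upper bounds.} The paper uses the same global real exponential $\varphi_t(x)=c_t e^{t\xi\cdot x}$ for all three regimes, with $t=(|\alpha|/2)^{1/2}$, $t=3|\beta|/2$, $t=(|\gamma|/2)^{1/3}$ respectively (Lemma~\ref{lem:test-function-1} and the proof of Theorem~\ref{thm:divergence-1}). Your local boundary-layer profiles are miscalibrated: the correct decay scale into $\Omega$ in case (b) is $\sim|\beta|^{-1}$, not $|\beta|^{-2}$, and your oscillatory ansatz $e^{i\beta^2 x_d}$ produces $\|D^2u\|_2^2\sim\beta^8$ which swamps the $\beta\|\partial_\nu u\|^2_{2,\partial\Omega}\sim -|\beta|^5$ contribution. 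The piece of information your heuristic misses is that, after normalising $\|\varphi_t\|_2=1$, the boundary integral $\int_{\partial\Omega}|\partial_\nu\varphi_t|^2\,d\mathcal{H}^{d-1}$ behaves like $2t^3$, not $t^2$: the divergence theorem applied to $(0,\ldots,0,e^{2tx_d})$ shows $\int_{\partial\Omega}e^{2tx_d}\nu_d\,d\mathcal{H}^{d-1}=2t\int_\Omega e^{2tx_d}\,dx$, so surface and volume masses concentrating near the top of $\partial\Omega$ differ by a factor of $2t$. Similarly, for case (a) the paper avoids the near-cancellation you flag by using the same \emph{real} exponential, giving the bulk terms $t^4+\alpha t^2=-|\alpha|^2/4$ at the optimal $t$ directly.

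\textbf{Lower bounds.} Your $\varepsilon$-interpolation inequalities are correct in spirit (they are Young-dual to the paper's Lemma~\ref{lem:numerical-range}), but the exponent on the normal derivative is wrong: the scaling is $\|\partial_\nu u\|_{2,\partial\Omega}^2 \lesssim \|D^2u\|_2^{3/2}\|u\|_2^{1/2}$, which Young gives $\leq\varepsilon\|D^2u\|_2^2+C\varepsilon^{-3}\|u\|_2^2$, not $\varepsilon^{-1/3}$; plugging $\varepsilon\sim 1/|\beta|$ then correctly yields $-C|\beta|^4$ (your stated exponent $\varepsilon^{-1/3}$ would give $-C|\beta|^{4/3}$). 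The $\varepsilon^{-1/3}$ power is right for the zeroth-order trace only. More importantly, your anticipated ``main obstacle'' --- getting $\varepsilon$-explicit constants on a $C^1$ domain --- does not arise in the paper's lower-bound argument: Lemma~\ref{lem:numerical-range} holds on arbitrary bounded Lipschitz domains, proved by the divergence theorem plus a covering argument, with no Ehrling-style compactness. The $C^1$ hypothesis in Theorem~\ref{thm:divergence-basic} is used only in the \emph{upper} bound (Lemma~\ref{lem:test-function-1}), where continuity of $\nu$ is needed to show $f_1(t),f_2(t)\to 1$.

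\textbf{The $k\geq 2$ bookkeeping is inverted.} You write ``$\lambda_k\geq\lambda_1$ gives nothing'' --- but it gives exactly the lower bound for all $k$, since $\lambda_k\geq\lambda_1\geq-C|\beta|^4$. The nontrivial step for $k\geq 2$ is the \emph{upper} bound, where one must exhibit a test function orthogonal to $u_1,\ldots,u_{k-1}$; the paper handles this inductively, invoking \cite[Lemma~2.6]{dk10} to find a direction $\xi$ making $\varphi_t$ almost $L^2$-orthogonal to the first $k-1$ eigenfunctions, and then correcting by Gram--Schmidt. Your proposed alternative of $k$ bumps at $k$ distinct boundary points is plausible but is a genuinely different argument, and one would still need to check these bumps are not all dominated by the first $k-1$ eigenfunctions (which, for strongly negative parameters, are themselves boundary-concentrated).
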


Here we have written $f \asymp g$ to mean that $f = \mathcal{O}(g)$ and $g = \mathcal{O}(f)$. Theorem~\ref{thm:divergence-basic} will be obtained from separate results controlling the rate of divergence of the eigenvalues from below (Theorem~\ref{thm:num-range-eig-div}; see also Lemma~\ref{lem:numerical-range} on the numerical range of the form) and from above (Theorem~\ref{thm:divergence-1}, see also the remarks after it).

We observe that the estimates from below that we obtain are always uniform with respect to the other parameters, as long as these remain bounded from below. However, the technique used to prove Theorem~\ref{thm:divergence-1} provides uniform estimates only for the first eigenvalue. We suspect that this difference may be due to the argument rather than the actual behaviour of the eigenvalues, and that a more sophisticated but more involved technique such as based on Dirichlet-Neumann bracketing (see below and cf., e.g., \cite{pankpop}) might be expected to give uniform estimates.

\begin{problem}
\label{prob:unif-div}
Study in more detail the limits in Theorem~\ref{thm:divergence-basic}, also using alternative approaches, to understand whether they are uniform in the other variables for $k>1$.
\end{problem}

\begin{problem}
\label{prob:neg-div-more-precise}
Determine the coefficients of the respective leading terms in the asymptotic expansion of $\lambda_k$ in the different parameter regimes in the setting of Theorem~\ref{thm:divergence-basic}, not just for $C^1$-domains but for domains with corners and general Lipschitz domains. We observe that Theorem~\ref{thm:divergence-1} provides one-sided estimates, see also Remark~\ref{rem:big-neg-alpha} for case (a).
\end{problem}

As in the case of the Laplacian, it is natural to expect that the coefficient of the leading term will depend on smoothness properties of the boundary in cases (b) and (c); in the case of the Laplacian this has been a topic of active research in recent years (see, e.g., \cite{khalile18,lp08} and the references therein).

\begin{problem}
\label{prob:neg-div-more-precise2}
In the case where $\partial\Omega$ is smooth, obtain more terms in the asymptotic expansion for $\lambda_k$ in the setting of Theorem~\ref{thm:divergence-basic}, in the various parameter regimes.
\end{problem}

The most natural approach in the smooth case would be to develop a Dirichlet-Neumann bracketing technique for the Bilaplacian, which is currently completely open. However, as a first step, it may be advisable to:

\begin{problem}
\label{prob:simple-study}
Study the divergence question in model cases where the eigenvalues can be described more or less explicitly: half-lines/half-spaces; balls; annuli; rectangles.
\end{problem}

Finally, let us also observe that Theorem~\ref{thm:divergence-basic} does not say anything in the case where either $\beta$ or $\gamma$ are plus infinity, yet we conjecture the same rate of divergence to hold anyway (cf.\ Remark~\ref{rem:intermediate-regime} and Open Problem~\ref{op:divergence-rates}).

\begin{proof}[Proof of Theorem~\ref{thm:steklovbuckling}]
Throughout this proof we will write $\lambda_k(\alpha,\beta,\gamma)$ in place of $\lambda_k (\Omega,\sigma,\alpha,\beta,\gamma)$, as $\Omega$ and $\sigma$ will be treated as fixed.

For point (a), we consider the following buckling-type eigenvalue problem
\begin{equation}
\label{bucklingstrong}
\begin{cases}
\Delta^2u=-\Lambda\Delta u, & \text{in\ }\Omega,\\
\Beta(u) = -\beta\frac{\partial u}{\partial\nu}, & \text{on\ }\partial\Omega,\\
\Gamma(u)=-\gamma u, & \text{on\ }\partial\Omega,
\end{cases}
\end{equation}
whose weak formulation is
\begin{equation}
\int_{\Omega} \left((1-\sigma)D^2u:D^2 v +\sigma\Delta u\Delta v \right)\,dx
+\beta \int_{\partial\Omega}\frac{\partial u}{\partial\nu}\frac{\partial  v }{\partial\nu} \,d\mathcal{H}^{d-1}(x)
+\gamma \int_{\partial\Omega} u  v  \,d\mathcal{H}^{d-1}(x)
=\Lambda\int_{\Omega} \nabla u\cdot \nabla v  \,dx
\end{equation}
for $u,v\in H^2(\Omega)$. If either $\beta$ or $\gamma$ equals plus infinity, we can slightly modify problem \eqref{bucklingstrong} in the same way as we do for problem \eqref{robinstrong} (see Section \ref{sec:setting}). In any case, problem \eqref{bucklingstrong} has a non-decreasing diverging sequence of eigenvalues of finite multiplicity
$$
\Lambda_1(\beta,\gamma)\le \Lambda_2(\beta,\gamma)\le\dots\le\Lambda_k(\beta,\gamma)\le\dots\to+\infty.
$$
This fact can be easily proved following the ideas presented in, e.g., \cite{bula2013}. Now, if we set $\alpha=-\Lambda_k=-\Lambda_k(\beta,\gamma)$, then
$$
\lambda_k(-\Lambda_k,\beta,\gamma)=0
$$ 
and the multiplicity of $\lambda_k(-\Lambda_k,\beta,\gamma)$ is the same as that of $\Lambda_k$. Now consider the first $k$ eigenfunctions of problem \eqref{bucklingstrong} $v_1,\dots,v_k$, and let $v\in V_k=\Span \{ v_1,\dots,v_k\}$. Then
$$
\mathcal Q_{\Omega,\sigma,\alpha,\beta,\gamma}(v, v)\le \Lambda_k\int_{\Omega} |\nabla v|^2\,dx +\alpha \int_{\Omega} |\nabla v|^2\,dx,
$$
which implies that
\begin{equation}
\label{ineqbuckling}
\lambda_k(\alpha,\beta,\gamma)\le(\Lambda_k+\alpha)\max_{v\in V_k}\frac{\int_{\Omega} |\nabla v|^2\,dx}{\int_{\Omega} |v|^2\,dx}.
\end{equation}
Letting $\alpha\to-\infty$, the right hand side in \eqref{ineqbuckling} goes to minus infinity, proving the claim.

We now pass to point (b). In this case, we introduce the following Steklov-type problem (cf.\ \cite{lapro})

\begin{equation}
\label{steklov1}
\begin{cases}
\Delta^2u-\alpha\Delta u=0, & \text{in\ }\Omega,\\
\Beta(u) = \eta\frac{\partial u}{\partial\nu}, & \text{on\ }\partial\Omega,\\
\Gamma(u)=-\gamma u, & \text{on\ }\partial\Omega,
\end{cases}
\end{equation}
whose weak formulation is
\begin{equation}
\int_{\Omega} \left((1-\sigma)D^2u:D^2 v +\sigma\Delta u\Delta v \right)\,dx +\alpha \int_{\Omega} \nabla u\cdot \nabla v  \,dx
+\gamma \int_{\partial\Omega} u  v  \,d\mathcal{H}^{d-1}(x)
=\eta \int_{\partial\Omega}\frac{\partial u}{\partial\nu}\frac{\partial  v }{\partial\nu} \,d\mathcal{H}^{d-1}(x)
\end{equation}
where $u,v\in H^2(\Omega)$. As in the previous case, if $\gamma$ equals plus infinity, we can slightly modify problem \eqref{steklov1} in the same way as we do for problem \eqref{robinstrong}. In any case, problem \eqref{steklov1} has a non-decreasing diverging sequence of eigenvalues of finite multiplicity (see \cite{lapro})
$$
\eta_1(\alpha,\gamma)\le \eta_2(\alpha,\gamma)\le\dots\le\eta_k(\alpha,\gamma)\le\dots\to+\infty.
$$
Now, if we set $\beta=-\eta_k=-\eta_k(\alpha,\gamma)$, then
$$
\lambda_k(\alpha,-\eta_k,\gamma)=0
$$ 
and the multiplicity of $\lambda_k(\alpha,-\eta_k,\gamma)$ is the same of $\eta_k$. The same test-function argument used in point a) allows to conclude.

Point (c) can be treated in the same way, considering the Steklov-type problem
\begin{equation}
\label{steklov2}
\begin{cases}
\Delta^2u-\alpha\Delta u=0, & \text{in\ }\Omega,\\
\Beta(u) = -\beta\frac{\partial u}{\partial\nu}, & \text{on\ }\partial\Omega,\\
\Gamma(u)=\xi u, & \text{on\ }\partial\Omega;
\end{cases}
\end{equation}
we refer to \cite{lapro} for more details on this problem.
\end{proof}


\medskip
\textbf{Estimates from below on the rate of divergence.} We next establish the maximal possible rate of divergence of the eigenvalues. We start with the following estimate on the quadratic form, which will then directly imply the bounds on the eigenvalues as $\alpha$, $\beta$ and/or $\gamma$ diverge to $-\infty$ (Theorem~\ref{thm:num-range-eig-div}). This also yields the first half of Theorem~\ref{thm:divergence-basic}.

\begin{lemma}
\label{lem:numerical-range}
Let $\Omega \subset \mathbb R^d$ be a bounded Lipschitz domain and let $\alpha\in \R$, $\beta,\gamma \in \R \cup \{+\infty\}$ and $\sigma \in (-\frac{1}{d-1},1)$ be given. Then there exist constants $C_1,\ldots,C_4 > 0$ depending only on $\Omega$ such that
\begin{equation}
\begin{split}
\label{numerical-range}
	\mathcal{Q}_{\Omega,\sigma,\alpha,\beta,\gamma} (u,u) \geq \|D^2u\|_2^2 + &\alpha \|\nabla u\|_2^2 + 
	\min\{0,\beta\} \left(C_1 \|D^2u\|_2^{3/2} + C_2\|D^2u\|_2 \right)\\ &\qquad + \min\{0,\gamma\} \left(C_3\|D^2u\|_2^{1/2}+C_4\right)
\end{split}
\end{equation}
for all $u$ in the form domain of $\mathcal{Q}_{\Omega,\sigma,\alpha,\beta,\gamma}$ for which $\|u\|_2=1$.
\end{lemma}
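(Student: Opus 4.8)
The plan is to estimate the four groups of terms in $\mathcal{Q}_{\Omega,\sigma,\alpha,\beta,\gamma}(u,u)$ separately: keep the interior and $\alpha$-terms, discard the two boundary integrals when their coefficients are non-negative (which is exactly when $\min\{0,\beta\}$ resp.\ $\min\{0,\gamma\}$ vanishes), and when $\beta<0$ resp.\ $\gamma<0$ bound those boundary integrals \emph{from above} in terms of $\|D^2u\|_2$, exploiting the normalisation $\|u\|_2=1$. This is the biharmonic analogue of the numerical-range computation in \cite[Section~6]{bkl19}. For the interior term, the pointwise Cauchy--Schwarz bound $(\Delta u)^2\le d\,|D^2u|^2$ together with $\sigma\in(-\tfrac1{d-1},1)$ gives
\[
(1-\sigma)|D^2u|^2+\sigma(\Delta u)^2\ \ge\ c_\sigma\,|D^2u|^2,\qquad c_\sigma:=\min\{1-\sigma,\,1+(d-1)\sigma\}>0,
\]
so this term dominates $c_\sigma\|D^2u\|_2^2$ (with $c_\sigma=1$ when $\sigma=0$; in general the coefficient of $\|D^2u\|_2^2$ in \eqref{numerical-range} is this $c_\sigma$). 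The $\alpha\|\nabla u\|_2^2$ term is simply retained. It thus remains to prove, for $\|u\|_2=1$, upper bounds of the shape
\[
\Big\|\tfrac{\partial u}{\partial\nu}\Big\|_{2,\partial\Omega}^2\ \lesssim\ \|D^2u\|_2^{3/2}+\|D^2u\|_2+1,\qquad
\|u\|_{2,\partial\Omega}^2\ \lesssim\ \|D^2u\|_2^{1/2}+1,
\]
with constants depending only on $\Omega$; multiplying by $-|\beta|$, $-|\gamma|$ and summing then yields \eqref{numerical-range}. (If $\beta$ or $\gamma$ equals $+\infty$, the corresponding boundary integral vanishes identically on the form domain, so nothing is needed there.)

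For the two trace estimates I would use the multiplicative trace inequality, valid on any bounded Lipschitz domain,
\[
\|w\|_{2,\partial\Omega}^2\ \le\ C_\Omega\big(\|w\|_2^2+\|w\|_2\,\|\nabla w\|_2\big)\qquad\text{for all }w\in H^1(\Omega),
\]
which follows from $\int_{\partial\Omega}w^2(V\cdot\nu)\,d\mathcal{H}^{d-1}=\int_\Omega\divergence(w^2V)\,dx$ for a fixed smooth vector field $V$ with $V\cdot\nu\ge c>0$ on $\partial\Omega$ (built from the local graph charts of $\partial\Omega$ via a partition of unity), together with the interpolation inequality $\|\nabla u\|_2^2\le C\|u\|_2\|u\|_{H^2}\le C'(\|u\|_2\|D^2u\|_2+\|u\|_2^2)$ and the equivalence $\|u\|_{H^2}\asymp\|D^2u\|_2+\|u\|_2$. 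Taking $w=u$, inserting $\|u\|_2=1$ and $\|\nabla u\|_2\le C(\|D^2u\|_2^{1/2}+1)$, gives $\|u\|_{2,\partial\Omega}^2\le C_3\|D^2u\|_2^{1/2}+C_4$ at once. Taking instead $w=\partial_iu$ for each $i$ and summing — using Cauchy--Schwarz in the form $\sum_i\|\partial_iu\|_2\|\nabla\partial_iu\|_2\le\|\nabla u\|_2\|D^2u\|_2$, and $|\partial u/\partial\nu|\le|\nabla u|$ on $\partial\Omega$ — gives
\[
\Big\|\tfrac{\partial u}{\partial\nu}\Big\|_{2,\partial\Omega}^2\ \le\ C_\Omega\big(\|\nabla u\|_2^2+\|\nabla u\|_2\|D^2u\|_2\big)\ \le\ C_1\|D^2u\|_2^{3/2}+C_2\|D^2u\|_2+C_0
\]
after substituting $\|\nabla u\|_2^2\le C(\|D^2u\|_2+1)$ and $\|\nabla u\|_2\le C(\|D^2u\|_2^{1/2}+1)$. (The additive constant $C_0$ plays exactly the role of $C_4$ on the $\gamma$-line — affine $u$ show that some such term cannot be avoided — and should accompany the $\beta$-line of \eqref{numerical-range}; it is harmless for all of the applications.)

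The one place where a naive approach fails, and hence the crux, is the choice of trace inequality: the plain bounded trace maps $u\mapsto u|_{\partial\Omega}$ and $u\mapsto\partial u/\partial\nu|_{\partial\Omega}$ from $H^2(\Omega)$ into $L^2(\partial\Omega)$ would only give $\lesssim\|u\|_{H^2}^2\asymp\|D^2u\|_2^2+1$, i.e.\ the power $2$ rather than $3/2$ and $1/2$; the gain comes precisely from the multiplicative (Gagliardo--Nirenberg-type) structure, that is, from spending the normalisation $\|u\|_2=1$ by interpolating against the $L^2$-norm, and, for $\partial u/\partial\nu$, from applying that interpolation once more at the level of $\nabla u$. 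Everything else is elementary algebra. The remaining items to verify are all standard on bounded Lipschitz domains — the multiplicative trace inequality and the $H^2$-interpolation inequality with $\Omega$-dependent constants, and the norm equivalence $\|u\|_{H^2}\asymp\|D^2u\|_2+\|u\|_2$ — together with the trivial bookkeeping in the limiting cases $\beta,\gamma\in\{+\infty\}$.
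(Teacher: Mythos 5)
Your proposal is correct and is essentially the paper's own argument: the same splitting of the form, with the two boundary terms (only needed when $\beta<0$ resp.\ $\gamma<0$) bounded from above by a divergence-theorem multiplicative trace estimate — your bound $\|\partial u/\partial\nu\|_{2,\partial\Omega}^2 \lesssim \|\nabla u\|_2^2+\|\nabla u\|_2\|D^2u\|_2$ is exactly the intermediate inequality \eqref{normalderivintermedest} in the paper, proved there by the same flux argument (in local charts with a covering, rather than a single global vector field), following \cite[Lemma~6.4]{bkl19} — and then combined with the Gagliardo--Nirenberg interpolation for $\|\nabla u\|_2$. The one substantive difference is that you retain the additive terms which the paper's interpolation \eqref{nablacontrol} and its $\beta$-line omit, and your observation is right: nonconstant affine $u$ (normalised in $L^2$) show that an additive constant must accompany the $\beta$-term, and likewise the coefficient of $\|D^2u\|_2^2$ should be $c_\sigma=\min\{1-\sigma,1+(d-1)\sigma\}$ rather than $1$; both are harmless corrections, absorbed without change in the divergence-rate estimates of Theorem~\ref{thm:num-range-eig-div}.
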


We postpone the proof of Lemma~\ref{lem:numerical-range} in order to give our principal estimate from below on the rate of divergence of the eigenvalues.

\begin{theorem}
\label{thm:num-range-eig-div}
Let $\Omega \subset \mathbb R^d$ be a bounded Lipschitz domain and suppose that $\sigma \in (-\frac{1}{d-1},1)$ is fixed.
\begin{itemize}
\item[(a)] If $\beta,\gamma \in \R \cup \{+\infty\}$ are bounded from below, then
\begin{displaymath}
	\limsup_{\alpha\to -\infty} \frac{\lambda_1 (\Omega,\sigma,\alpha,\beta,\gamma)}{-|\alpha|^2} < +\infty.
\end{displaymath}
\item[(b)] If $\alpha \in \R$ is bounded and $\gamma \in \R \cup \{+\infty\}$ is bounded from below, then
\begin{displaymath}
	\limsup_{\beta\to -\infty} \frac{\lambda_1 (\Omega,\sigma,\alpha,\beta,\gamma)}{-|\beta|^4} < +\infty.
\end{displaymath}
\item[(c)]  If $\alpha \in \R$ is bounded and $\beta\in \R \cup \{+\infty\}$ is bounded from below, then
\begin{displaymath}
	\limsup_{\gamma\to -\infty} \frac{\lambda_1 (\Omega,\sigma,\alpha,\beta,\gamma)}{-|\gamma|^{4/3}} < +\infty.
\end{displaymath}
\end{itemize}
In each case, treating $\Omega$ and $\sigma$ as fixed, both the limit superior and the rate of convergence to it can be controlled purely in terms of the respective lower bounds on the other two parameters.
\end{theorem}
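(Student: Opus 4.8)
The plan is to derive the lower bounds on $\lambda_1$ directly from Lemma~\ref{lem:numerical-range} together with the variational characterisation
\begin{equation*}
	\lambda_1(\Omega,\sigma,\alpha,\beta,\gamma) = \inf\bigl\{\mathcal{Q}_{\Omega,\sigma,\alpha,\beta,\gamma}(u,u) : u \text{ in the form domain},\ \|u\|_2 = 1\bigr\}
\end{equation*}
(the case $k=1$ of \eqref{minmax}), reducing in each case to an elementary one-variable minimisation. Fix $u$ in the form domain with $\|u\|_2 = 1$ and put $t := \|D^2 u\|_2 \ge 0$. The only extra ingredient needed is the standard interpolation (Gagliardo--Nirenberg) inequality on bounded Lipschitz domains, $\|\nabla u\|_2^2 \le C_0(\Omega)\bigl(\|D^2 u\|_2\|u\|_2 + \|u\|_2^2\bigr)$, which with $\|u\|_2 = 1$ reads $\|\nabla u\|_2^2 \le C_0(t+1)$; the crucial point is that this bound is \emph{linear} in $t$. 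Feeding this and the prescribed lower bounds on the two non-diverging parameters (so that the factors $\min\{0,\beta\}$, $\min\{0,\gamma\}$, and $\alpha$ where it is bounded, are all bounded below) into the estimate of Lemma~\ref{lem:numerical-range}, we obtain $\mathcal{Q}(u,u) \ge h(t)$ for an explicit function $h$ that is a polynomial in $t^{1/2}$ whose coefficients depend only on $\Omega$, $\sigma$, the relevant parameter bounds and the diverging parameter; hence $\lambda_1 \ge \inf_{t \ge 0} h(t)$.

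For case (a), with $\beta,\gamma \ge -M$ and $\alpha < 0$ large, this gives
\begin{equation*}
	h(t) = h_\alpha(t) = t^2 - |\alpha|C_0(t+1) - M\bigl(C_1 t^{3/2} + C_2 t + C_3 t^{1/2} + C_4\bigr).
\end{equation*}
Rescaling $t = |\alpha|\tau$ yields $h_\alpha(|\alpha|\tau) = |\alpha|^2\bigl(\tau^2 - C_0\tau - \delta_\alpha(\tau)\bigr)$, where $\delta_\alpha(\tau) \ge 0$ is a finite sum of monomials $\delta_i\tau^{p_i}$ with $p_i \in \{0,\tfrac12,1,\tfrac32\}$ and coefficients $\delta_i = \delta_i(\Omega,\sigma,M)$ that tend to $0$ as $|\alpha| \to \infty$. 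Since each $p_i < 2$, a single application of Young's inequality gives $\delta_\alpha(\tau) \le \tfrac12\tau^2 + \eta_\alpha$ with $\eta_\alpha \to 0$, so that the bracket is bounded below, uniformly for $|\alpha|$ large, by $\inf_{\tau \ge 0}\bigl(\tfrac12\tau^2 - C_0\tau\bigr) - \eta_\alpha \ge -C_5$, where $C_5$ depends only on $\Omega$, $\sigma$ and $M$. Hence $\lambda_1 \ge -C_5|\alpha|^2$ for all sufficiently large $|\alpha|$, which is the claim, with the asserted uniformity.

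Cases (b) and (c) are handled identically; only the diverging term, and hence the correct scaling, changes. In case (b) we have $\min\{0,\beta\} = -|\beta|$, the dominant negative contribution is $-|\beta|C_1 t^{3/2}$, and the scaling $t = |\beta|^2\tau$ produces $h_\beta(|\beta|^2\tau) = |\beta|^4\bigl(\tau^2 - C_1\tau^{3/2} - \delta_\beta(\tau)\bigr)$, with $\delta_\beta \ge 0$ a perturbation made of monomials of degree $<2$ whose coefficients vanish as $|\beta|\to\infty$; this gives $\lambda_1 \ge -C_6|\beta|^4$. In case (c) the dominant term is $-|\gamma|C_3 t^{1/2}$, and the scaling $t = |\gamma|^{2/3}\tau$ — chosen precisely so that $t^2$ and $|\gamma|t^{1/2}$ both scale like $|\gamma|^{4/3}$ while every other term is strictly lower order — gives $h_\gamma(|\gamma|^{2/3}\tau) = |\gamma|^{4/3}\bigl(\tau^2 - C_3\tau^{1/2} - \delta_\gamma(\tau)\bigr)$ and thus $\lambda_1 \ge -C_7|\gamma|^{4/3}$. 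In all three cases the constant and the threshold for the diverging parameter depend only on $\Omega$, $\sigma$ and the prescribed lower bounds on the other two parameters, which is the final uniformity assertion.

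There is no deep obstacle once Lemma~\ref{lem:numerical-range} is in hand; the two points needing care are (i) controlling the indefinite term $\alpha\|\nabla u\|_2^2$ by an interpolation inequality that is \emph{sublinear} in $\|D^2 u\|_2$ — the cruder bound $\|\nabla u\|_2^2 \lesssim \|D^2 u\|_2^2 + \|u\|_2^2$ would destroy the $t^2$-coercivity for large negative $\alpha$ — and (ii) checking, after each scaling, that every term beyond the leading pair $\tau^2 - c\,\tau^p$ carries a coefficient that vanishes in the limit, so that one application of Young's inequality produces a lower bound uniform in the remaining parameters. The genuinely delicate estimates, namely the trace inequalities responsible for the exponents $3/2$, $1$, $1/2$, $0$, are those underpinning Lemma~\ref{lem:numerical-range} itself, whose proof is given separately.
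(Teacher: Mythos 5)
Your proposal is correct and follows essentially the same route as the paper: start from Lemma~\ref{lem:numerical-range}, control the indefinite gradient term by an interpolation inequality of the form $\|\nabla u\|_2 \lesssim \|D^2u\|_2^{1/2}\|u\|_2^{1/2}$ (the paper's \eqref{nablacontrol}), and reduce to a one-variable minimisation in $t = \|D^2u\|_2$. Where you differ is purely in the bookkeeping: the paper fixes the eigenfunction $u_1(\alpha)$, puts $x = \|D^2 u_1(\alpha)\|_2$, and splits into the cases where $x$ stays bounded or diverges, discarding the $\beta,\gamma$ terms as asymptotically negligible before running a ``standard calculus'' minimisation; you instead observe that $\lambda_1 \ge \inf_{t\ge 0} h(t)$ holds outright and then use an explicit rescaling ($t=|\alpha|\tau$, $|\beta|^2\tau$, $|\gamma|^{2/3}\tau$) plus Young's inequality to absorb all subleading monomials into $\tfrac12\tau^2$. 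This is a genuinely cleaner execution of the same idea: it removes the case distinction entirely, makes the claimed uniformity in the other parameters transparent (the Young remainders vanish as the diverging parameter grows, with constants depending only on $\Omega$, $\sigma$ and the prescribed lower bounds), and isolates precisely why the exponents $2$, $4$, $4/3$ emerge as the only scalings balancing $t^2$ against the dominant negative term. The price is that you lose the sharper leading coefficient $-C^2/4$ that the paper records in passing for case (a), but that is immaterial for the $\limsup < +\infty$ statement being proved.
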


\begin{proof}
In each case, the idea is to use \eqref{numerical-range} to show that no rate of divergence of $\|D^2u\|_2$ as the parameter in question tends to $-\infty$ can lead to $\mathcal{Q}_{\Omega,\sigma,\alpha,\beta,\gamma} (u,u)$ diverging more rapidly than the power in question.

Concretely, for (a) we first invoke the estimate
\begin{equation}
\label{nablacontrol}
	\|\nabla u\|_2 \leq C \|D^2u\|_2^{1/2}\|u\|_2^{1/2}
\end{equation}
for all $u \in H^2(\Omega)$ and in particular for all $u$ in the form domain of $\mathcal{Q}_{\Omega,\sigma,\alpha,\beta,\gamma}$, where $C>0$ is a constant depending only on $\Omega$ (see, e.g., \cite[Section 4.4]{burenkov} or \cite{bin}). 
Assuming without loss of generality that $\alpha < 0$, we see that \eqref{numerical-range} reduces to
\begin{equation}
\label{eq:my-qest}
\begin{split}
	\mathcal{Q}_{\Omega,\sigma,\alpha,\beta,\gamma} (u,u) \geq \|D^2u\|_2^2 + &\alpha C\|D^2 u\|_2 + 
	\min\{0,\beta\} \left(C_1 \|D^2u\|_2^{3/2} + C_2\|D^2u\|_2 \right)\\ &\qquad + \min\{0,\gamma\} \left(C_3\|D^2u\|_2^{1/2}+C_4\right).
\end{split}
\end{equation}
Now we suppose that $\alpha \to -\infty$, while we assume that $\beta,\gamma$ take on a fixed, negative value, which we may take to be any bound from below on their range of values, as this can only worsen the estimate \eqref{eq:my-qest} (obviously, if they are positive, we can estimate the respective terms from below by zero). Denote by $u_1 = u_1 (\alpha)$ an eigenfunction associated with $\lambda_1$, normalised to have unit $L^2(\Omega)$-norm. If $\|D^2 u_1(\alpha)\|_2$ remains bounded as $\alpha \to -\infty$ (or just for a sequence of values of $\alpha$), then
\begin{displaymath}
	\mathcal{Q}_{\Omega,\sigma,\alpha,\beta,\gamma} (u_1(\alpha),u_1(\alpha)) \geq c_1 \alpha + c_2
\end{displaymath}
for all $\alpha < 0$ and for certain constants $c_1,c_2>0$ which in particular implies (a) (at least for this sequence). Now suppose $\|D^2 u_1 (\alpha)\|_2$ diverges (again, possibly for a sequence), which means that
\begin{equation*}
	\liminf_{n\to\infty} \frac{\|D^2u_1(\alpha)\|_2^2}{|\beta|\left(C_1 \|D^2u_1(\alpha)\|_2^{3/2} + C_2\|D^2u_1(\alpha)\|_2 \right)}
	=	\liminf_{n\to\infty} \frac{\|D^2u_1(\alpha)\|_2^2}{|\gamma|\left(C_3\|D^2u_1(\alpha)\|_2^{1/2}+C_4\right)} = +\infty,
\end{equation*}
so that we may neglect the terms involving $\beta$ and $\gamma$ when calculating limits. Thus, setting $x:= \|D^2u_1(\alpha)\|_2$, it follows from \eqref{eq:my-qest} and a standard calculus argument that
\begin{equation}
\label{eq:estimate_uniform}
	\lambda_1 \geq x^2 + C\alpha x + \tilde C_1 x^{\frac 3 2} +\tilde C_2
\end{equation}
for all $\alpha < 0$, where $\tilde C_i=\tilde C_i(\Omega,\beta,\gamma)$, $i=1,2$ (or, more precisely, $\tilde C_i = \tilde C_i(\Omega,\min \beta,\min\gamma)$). Another standard calculus argument allows us to minimise the expression in the right-hand side, leading to $\lambda_1 \geq -C^2|\alpha|^2/4 + \mathcal{O}(|\alpha|^{3/2})$ as $\alpha \to -\infty$, uniformly in $\beta$ and $\gamma$ bounded from below. This implies (a).

The arguments for (b) and (c) are entirely analogous and we do not go into details; we merely observe that the critical rate of growth of $\|D^2u_1\|_2$ (i.e., up to a constant, the correct choice of $x$ after the lower order terms have been discarded, which leads to the fastest possible rate of divergence of the right-hand side of \eqref{numerical-range}) is $|\beta|^{1/2}$ in case (b) and $|\gamma|^{2/3}$ in case (c).
\end{proof}

The proof of Lemma~\ref{lem:numerical-range} is, in turn, an immediate consequence of the following two lemmata, which control the $L^2(\partial\Omega)$-norm of the normal derivative and the trace of a function $u \in H^2(\Omega)$.

\begin{lemma}
Let $\Omega \subset \mathbb R^d$ be a bounded Lipschitz domain. There exist constants $C_1,C_2 > 0$ such that
\begin{displaymath}
	\int_{\partial\Omega} \left|\frac{\partial u}{\partial\nu}\right|^2\,d\mathcal{H}^{d-1}(x) \leq C_1\|D^2u\|_2^{3/2}+C_2\|D^2u\|_2
\end{displaymath}
for all $u \in H^2 (\Omega)$ with $\|u\|_2 = 1$.
\end{lemma}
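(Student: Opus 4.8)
The plan is to exploit the fact that the normal derivative is built entirely out of first-order derivatives of $u$. Indeed, since $|\nu|=1$ on $\partial\Omega$ we have the pointwise bound $|\partial u/\partial\nu| = |\nabla u\cdot\nu| \le |\nabla u|$ there, so it suffices to estimate $\|\nabla u\|_{2,\partial\Omega}^2$. Now $\nabla u$ is an $\R^d$-valued function in $H^1(\Omega)$ (because $u\in H^2(\Omega)$), so its boundary trace is controlled by a standard trace inequality in terms of $\|\nabla u\|_2$ and $\|D^2u\|_2$; inserting the interpolation estimate \eqref{nablacontrol}, which for $\|u\|_2=1$ reads $\|\nabla u\|_2 \le C_\Omega'\,\|D^2u\|_2^{1/2}$, then converts these quantities into the sub-quadratic powers $\|D^2u\|_2$ and $\|D^2u\|_2^{3/2}$ appearing in the statement.

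Carrying this out, I would first apply the multiplicative trace inequality on the bounded Lipschitz domain $\Omega$ (valid for scalar $H^1$-functions, hence componentwise for vector-valued ones; this is of the same kind as the trace estimates already used in the proof of Theorem~\ref{lem:robin-form}), namely $\|w\|_{2,\partial\Omega}^2 \le C_\Omega\,\|w\|_2\big(\|w\|_2 + \|\nabla w\|_2\big)$, to $w=\nabla u$:
\[
	\left\|\frac{\partial u}{\partial\nu}\right\|_{2,\partial\Omega}^2 \le \|\nabla u\|_{2,\partial\Omega}^2 \le C_\Omega\,\|\nabla u\|_2\big(\|\nabla u\|_2 + \|D^2u\|_2\big).
\]
Then I would substitute \eqref{nablacontrol} (with $\|u\|_2=1$) into each factor $\|\nabla u\|_2$ on the right-hand side, obtaining
\[
	\left\|\frac{\partial u}{\partial\nu}\right\|_{2,\partial\Omega}^2 \le C_\Omega C_\Omega'\,\|D^2u\|_2^{1/2}\Big(C_\Omega'\,\|D^2u\|_2^{1/2} + \|D^2u\|_2\Big) = C_2\,\|D^2u\|_2 + C_1\,\|D^2u\|_2^{3/2},
\]
with $C_1 := C_\Omega C_\Omega'$ and $C_2 := C_\Omega (C_\Omega')^2$, both depending only on $\Omega$, which is exactly the asserted bound.

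Both ingredients are classical on Lipschitz domains — the second being precisely \eqref{nablacontrol} — so I do not expect a genuine obstacle. The one point that requires a little care is to use the \emph{multiplicative} form of the trace inequality rather than the crude additive one: replacing the bound on $\|\nabla u\|_{2,\partial\Omega}^2$ by $\varepsilon\|D^2u\|_2^2 + C_\varepsilon\|\nabla u\|_2^2$ with a fixed $\varepsilon$ would leave a leading term of order $\|D^2u\|_2^2$; it is the scale-invariant balancing of the two terms (equivalently, optimising $\varepsilon$ against $\|D^2u\|_2$, or using the multiplicative estimate from the outset) that brings the exponent down to $3/2$ and produces the clean two-term expression needed for Lemma~\ref{lem:numerical-range}.
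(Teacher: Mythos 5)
Your argument is correct, and it lands on exactly the intermediate inequality the paper proves, namely \eqref{normalderivintermedest}, $\int_{\partial\Omega}|\partial u/\partial\nu|^2\,d\mathcal{H}^{d-1} \leq C\|\nabla u\|_2\|D^2u\|_2 + C\|\nabla u\|_2^2$, after which both proofs conclude identically by inserting \eqref{nablacontrol}. The difference is how that intermediate bound is obtained. You get it by the pointwise estimate $|\partial u/\partial\nu| \leq |\nabla u|$ followed by the classical multiplicative trace inequality $\|w\|_{2,\partial\Omega}^2 \leq C_\Omega\|w\|_2(\|w\|_2+\|\nabla w\|_2)$ applied componentwise to $w=\nabla u \in H^1(\Omega;\R^d)$ (summing the components with Cauchy--Schwarz, which indeed gives $\|\nabla u\|_2\|D^2u\|_2$, not a larger quantity). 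The paper instead re-derives this bound by hand, adapting \cite[Lemma~6.4]{bkl19}: it localises near a boundary point where $\partial\Omega$ is a Lipschitz graph and the $d$-th component of the normal is bounded below, introduces a cut-off $\varphi_z$, applies the divergence theorem on the Lipschitz domain to the field $F=(0,\dots,0,\varphi_z|\nabla u|^2)$, and finishes with a covering argument. The two routes rest on the same mechanism — a scale-balanced, multiplicative control of the boundary trace of $|\nabla u|^2$, ultimately coming from the divergence theorem with a field transversal to $\partial\Omega$ — so yours buys brevity by citing the trace interpolation inequality as a known fact on Lipschitz domains, while the paper's buys self-containedness and parallelism with the argument it reuses from \cite{bkl19}. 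Your closing observation is exactly the crucial point: using an additive trace bound with fixed $\varepsilon$ would leave a term of order $\|D^2u\|_2^2$, and it is the multiplicative form (equivalently, optimising $\varepsilon$ against $\|D^2u\|_2$) that produces the exponent $3/2$ needed for Lemma~\ref{lem:numerical-range}.
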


\begin{proof}
In order to simplify the notation, in this proof we will write $C>0$ for a constant depending only $\Omega$ but which may change from line to line and even from term to term. We show that
\begin{equation}
\label{normalderivintermedest}
	\int_{\partial\Omega} \left|\frac{\partial u}{\partial\nu}\right|^2\,d\mathcal{H}^{d-1}(x) \leq C\|\nabla u\|_2\|D^2u\|_2 + C\|\nabla u\|_2^2
\end{equation}
for all $u \in H^2 (\Omega)$; applying \eqref{nablacontrol} to \eqref{normalderivintermedest} then yields the conclusion of the lemma. To prove \eqref{normalderivintermedest}, we adapt the proof of \cite[Lemma~6.4]{bkl19}. We fix an arbitrary point $z \in \partial\Omega$, an open neighbourhood $U_z \subset \R^d$ of $z$ and a coordinate system such that $\partial\Omega \cap U_z$ can be written as the graph of a Lipschitz function $g: \R^{d-1} \to \R$ in such a way that
\begin{displaymath}
	\Omega \cap U_z = \{(x_1,\ldots,x_d) : x_d < g(x_1,\ldots,x_d)\} \cap U_z
\end{displaymath}
(where we write $(x_1,\ldots,x_d) \in \R^d$). The Lipschitz normal vector to $\partial\Omega$, which {\it a priori} is a function $\nu=(\nu_1,\dots,\nu_d) \in L^\infty (\partial\Omega, \R^d)$, when restricted to $U_z$ satisfies
\begin{displaymath}
	C_z:=\essinf \{\nu_d (x) : x \in \partial\Omega \cap U_z \} > 0
\end{displaymath}
(cf.\ the proof of \cite[Lemma~6.4]{bkl19}). Together with $U_z$, we also fix a corresponding test function $\varphi_z \in C_c^\infty (\R^d)$ such that $0 \leq \varphi_z(x) \leq 1$ for all $x \in \R^d$, $\varphi_z|_{\partial\Omega \cap U_z} = 1$, and $\varphi_z(x)=0$ for all $x \in \partial\Omega$ for which $\nu_d(x) \leq 0$ (the existence of such a function $\varphi_z$ is guaranteed by shrinking $U_z$ slightly if necessary). Now choose $u \in H^2(\Omega)$ and suppose that $\|u\|_2=1$. Then
\begin{displaymath}
	 \int_{\partial\Omega\cap U_z} \left|\frac{\partial u}{\partial\nu}\right|^2\,d\mathcal{H}^{d-1}(x)
	\leq C_z^{-1}\int_{\partial\Omega} \varphi_z |\nabla u|^2 \nu_d\,d\mathcal{H}^{d-1}(x)
	= C_z^{-1}\int_\Omega \frac{\partial}{\partial x_d} (\varphi_z |\nabla u|^2)\,dx,
\end{displaymath}
where the latter equality follows from the divergence theorem applied to the vector field $$F = (0,\ldots,0,\varphi_z|\nabla u|^2) \in H^1(\Omega),$$ valid on Lipschitz domains \cite[Th\'eor\`eme~3.1.1]{necas67}. We estimate the integral on the right-hand side by
\begin{displaymath}
	\int_\Omega \frac{\partial}{\partial x_d} (\varphi_z |\nabla u|^2)\,dx \leq \|\nabla \varphi_z\|_\infty\|\nabla u\|_2^2 + 2\|\varphi_z\|_\infty
	\|D^2u\|_2 \|\nabla u\|_2.
\end{displaymath}
Since $\varphi_z$ was fixed, in dependence only on $z$, this yields \eqref{normalderivintermedest}, but for the integral over $\partial\Omega \cap U_z$ in place of $\partial\Omega$, and with $C=\max\{\|\nabla \varphi_z\|_\infty, 1\}$. Since $z \in \partial\Omega$ was arbitrary and our assumptions on $\Omega$ imply that $\partial\Omega$ is compact, a covering argument now leads to \eqref{normalderivintermedest}. As noted above, the conclusion of the lemma now follows.
\end{proof}

\begin{lemma}
Let $\Omega \subset \mathbb R^d$ be a bounded Lipschitz domain. There exist constants $C_3,C_4 > 0$ such that
\begin{displaymath}
	\int_{\partial\Omega} |u|^2\,d\mathcal{H}^{d-1}(x) \leq C_3\|D^2u\|_2^{1/2}+C_4
\end{displaymath}
for all $u \in H^2 (\Omega)$ with $\|u\|_2 = 1$.
\end{lemma}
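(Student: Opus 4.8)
The plan is to follow essentially the same strategy as in the preceding lemma, only now applied to the scalar function $u$ itself rather than to $|\nabla u|$. First I would establish the elementary trace estimate
\begin{equation}
\label{traceintermedest}
	\int_{\partial\Omega} |u|^2\,d\mathcal{H}^{d-1}(x) \le C\|\nabla u\|_2\|u\|_2 + C\|u\|_2^2
\end{equation}
for all $u \in H^1(\Omega)$, with $C>0$ depending only on $\Omega$. This is obtained exactly as in the proof of the previous lemma: one fixes an arbitrary $z \in \partial\Omega$, a Lipschitz chart on a neighbourhood $U_z$, and a cutoff $\varphi_z \in C_c^\infty(\R^d)$ with $0 \le \varphi_z \le 1$, $\varphi_z|_{\partial\Omega \cap U_z} = 1$, and $\varphi_z(x) = 0$ whenever $x \in \partial\Omega$ with $\nu_d(x) \le 0$; applying the divergence theorem on the Lipschitz domain $\Omega$ to the vector field $F = (0,\ldots,0,\varphi_z|u|^2) \in H^1(\Omega)$ (valid by \cite[Th\'eor\`eme~3.1.1]{necas67}, as in the proof of \cite[Lemma~6.4]{bkl19}), and using $C_z := \essinf\{\nu_d(x):x \in \partial\Omega \cap U_z\} > 0$, one gets
\begin{displaymath}
	\int_{\partial\Omega \cap U_z} |u|^2\,d\mathcal{H}^{d-1}(x) \le C_z^{-1}\int_{\partial\Omega} \varphi_z |u|^2 \nu_d\,d\mathcal{H}^{d-1}(x) = C_z^{-1}\int_\Omega \frac{\partial}{\partial x_d}\left(\varphi_z|u|^2\right)\,dx,
\end{displaymath}
and the right-hand side is bounded by $\|\nabla\varphi_z\|_\infty\|u\|_2^2 + 2\|\varphi_z\|_\infty\|\nabla u\|_2\|u\|_2$. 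Since $\partial\Omega$ is compact, a covering argument over $\partial\Omega$ then yields \eqref{traceintermedest}. (Alternatively, one could simply cite the classical interpolation trace inequality $\|u\|_{L^2(\partial\Omega)}^2 \le C\|u\|_{H^1(\Omega)}\|u\|_{L^2(\Omega)}$ on Lipschitz domains.)

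Second, I would insert the normalisation $\|u\|_2 = 1$, which reduces \eqref{traceintermedest} to $\int_{\partial\Omega}|u|^2\,d\mathcal{H}^{d-1}(x) \le C\|\nabla u\|_2 + C$. Third, I would apply \eqref{nablacontrol}, namely $\|\nabla u\|_2 \le C\|D^2u\|_2^{1/2}\|u\|_2^{1/2} = C\|D^2u\|_2^{1/2}$ for $u$ with unit $L^2(\Omega)$-norm, to conclude that $\int_{\partial\Omega}|u|^2\,d\mathcal{H}^{d-1}(x) \le C_3\|D^2u\|_2^{1/2} + C_4$, as claimed.

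There is essentially no serious obstacle here: the argument is a direct adaptation of the previous lemma and the only mildly delicate point, as there, is the justification of the divergence theorem for a merely $H^1$ vector field on a Lipschitz domain, which is covered by the cited result of Ne\v{c}as; everything else is the standard trace and interpolation machinery already invoked in the excerpt.
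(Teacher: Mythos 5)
Your proof is correct and follows essentially the same route as the paper: the paper simply cites \cite[Lemma~6.4]{bkl19} for the trace estimate $\int_{\partial\Omega}|u|^2\,d\mathcal{H}^{d-1}(x)\leq C(\|\nabla u\|_2+1)$ (valid for $\|u\|_2=1$) and then applies \eqref{nablacontrol}, whereas you re-derive that trace estimate via the same divergence-theorem argument used in the preceding lemma before applying \eqref{nablacontrol} in the identical way.
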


\begin{proof}
There exists a constant $C>0$ such that
\begin{displaymath}
	\int_{\partial\Omega} |u|^2\,d\mathcal{H}^{d-1}(x) \leq C(\|\nabla u\|_2 + 1)
\end{displaymath}
for all $u \in H^1 (\Omega)$, and hence all $u \in H^2(\Omega)$, for which $\|u\|_2=1$; see \cite[Lemma~6.4]{bkl19}. Applying \eqref{nablacontrol} immediately yields the statement of the lemma.
\end{proof}


\medskip
\textbf{Estimates from above on the rate of divergence.} We are now ready to give a complementary estimate on the divergence rate of the eigenvalues \`a la \cite{dk10}. Note that Theorem~\ref{thm:divergence-basic} follows directly from Theorems~\ref{thm:num-range-eig-div} and~\ref{thm:divergence-1}.

\begin{theorem}
\label{thm:divergence-1}
Let $\Omega \subset \mathbb R^d$ be a bounded $C^1$-domain and suppose that $\sigma \in (-\frac{1}{d-1},1)$ is fixed.
\begin{itemize}
\item[(a)] If $\beta,\gamma \in \R$ are fixed, or more generally remain bounded, then for each fixed $k \in \N$,
\begin{displaymath}
	\liminf_{\alpha\to -\infty} \frac{\lambda_k (\Omega,\sigma,\alpha,\beta,\gamma)}{-|\alpha|^2} \geq \frac{1}{4}.
\end{displaymath}
\item[(b)] If $\alpha,\gamma \in \R$ are fixed, or more generally remain bounded, then for each fixed $k \in \N$,
\begin{displaymath}
	\liminf_{\beta\to -\infty} \frac{\lambda_k (\Omega,\sigma,\alpha,\beta,\gamma)}{-|\beta|^4} \geq \frac{1}{2}\left(\frac{3}{2}\right)^3.
\end{displaymath}
\item[(c)]  If $\alpha,\beta \in \R$ are fixed, or more generally remain bounded, then for each fixed $k \in \N$,
\begin{displaymath}
	\liminf_{\gamma\to -\infty} \frac{\lambda_k (\Omega,\sigma,\alpha,\beta,\gamma)}{-|\gamma|^{4/3}} \geq \left(\frac{1}{2}\right)^{1/3}\frac{3}{2}.
\end{displaymath}
\end{itemize}
Treating $\Omega$ and $\sigma$ as fixed, in each case, the estimates for $k=1$ are uniform in the other two variables, as long as these remain within a compact interval.
\end{theorem}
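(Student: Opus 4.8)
The plan is to bound $\lambda_k$ from above by inserting explicit test functions into the min-max formula~\eqref{minmax}; together with the matching lower bounds of Theorem~\ref{thm:num-range-eig-div} this also gives Theorem~\ref{thm:divergence-basic}. The test functions will be concentrated in a thin layer near $\partial\Omega$, in the spirit of~\cite{dk10}, and a single family will serve for all three cases, only the relation between the layer width and the diverging parameter changing. I would fix a point $z\in\partial\Omega$ and local coordinates in which $z=0$, the outer unit normal at $z$ is $e_d$, and $\partial\Omega$ near $z$ is the graph $\{x_d=g(x')\}$ of a $C^1$ function $g$ with $g(0)=0$ and $\nabla g(0)=0$, $\Omega$ lying locally below the graph. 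Given $\varepsilon>0$, continuity of $\nabla g$ provides $\rho=\rho(\varepsilon)>0$ with $|\nabla g|\le\varepsilon$ on $\{|x'|\le\rho\}$; I fix $\chi\in C_c^\infty(\{|x'|<\rho\})$ with $\chi\equiv1$ on $\{|x'|\le\rho/2\}$, and for small $\delta>0$ let $g_\delta$ be the mollification of $g$ at scale $\delta$, so that $\|g-g_\delta\|_\infty=O(\varepsilon\delta)$, $\|\nabla g_\delta\|_\infty\le\varepsilon$, $\delta\|D^2 g_\delta\|_\infty=O(\varepsilon)$ and $\|\nabla g_\delta-\nabla g\|_\infty\to0$ as $\delta\to0$. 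With $m:=1/\delta$ the test function is
\begin{displaymath}
	u=u_{\delta,\varepsilon}(x):=\chi(x')\,e^{m(x_d-g_\delta(x'))},\qquad x\in\Omega.
\end{displaymath}
Since $g_\delta$ is smooth and $\overline{\Omega}$ is bounded, $u\in H^2(\Omega)$, which is the form domain of $\mathcal{Q}_{\Omega,\sigma,\alpha,\beta,\gamma}$; and $u$ decays exponentially into $\Omega$ on the scale $\delta$, so all integrals below are, up to exponentially small errors, concentrated in $\{x\in\Omega:\dist(x,\partial\Omega)=O(\delta)\}$ near $z$.

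The core computation gives $\partial_{x_d}u=mu$ and $\partial_{x_d}^2u=m^2u$, whereas every term obtained by differentiating $u$ in a tangential direction carries a factor $\nabla g_\delta$ or $D^2g_\delta$ and, by the mollification estimates, is of relative size $O(\varepsilon)$ once $m$ is large. Hence pointwise $(1-\sigma)|D^2u|^2+\sigma|\Delta u|^2=m^4u^2(1+O(\varepsilon))$ and $|\nabla u|^2=m^2u^2(1+O(\varepsilon))$; while on $\{x_d=g(x')\}$, using $|x_d-g_\delta(x')|=|g-g_\delta|=O(\varepsilon\delta)$ and $m\delta=1$, one obtains $u=\chi(1+O(\varepsilon))$ and $\partial_\nu u=m\chi(1+O(\varepsilon))$. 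Carrying out the integrations --- the interior $x_d$-integrals produce a common factor $\tfrac1{2m}(1+O(\varepsilon))$ that cancels against $\|u\|_2^2$, while the two boundary integrals pick up a relative factor $2m$ --- leads to
\begin{displaymath}
	\frac{\mathcal{Q}_{\Omega,\sigma,\alpha,\beta,\gamma}(u,u)}{\|u\|_2^2}=\bigl(m^4+\alpha m^2+2\beta m^3+2\gamma m\bigr)\bigl(1+O(\varepsilon)\bigr)+R,
\end{displaymath}
where $R$ collects the genuinely lower-order contributions (from $\nabla\chi$, $D^2\chi$, $D^2g_\delta$, and so on) and satisfies $R=O(m^3)$ with a constant independent of $\alpha,\beta,\gamma$ provided these stay bounded; thus $R$ is of lower order in each regime below.

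It remains to choose $m$. In case~(a), $m=(|\alpha|/2)^{1/2}$ minimises $m^4+\alpha m^2$ and gives the value $-|\alpha|^2/4$, the remaining terms $2\beta m^3+2\gamma m=O(|\alpha|^{3/2})$ being negligible for bounded $\beta,\gamma$. In case~(b), $m=3|\beta|/2$ minimises $m^4+2\beta m^3$ with value $-\tfrac12\bigl(\tfrac32\bigr)^3|\beta|^4$, while $\alpha m^2+2\gamma m=O(|\beta|^2)$. In case~(c), $m=(|\gamma|/2)^{1/3}$ minimises $m^4+2\gamma m$ with value $-\bigl(\tfrac12\bigr)^{1/3}\tfrac32\,|\gamma|^{4/3}$, while $\alpha m^2+2\beta m^3=O(|\gamma|)$. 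Since $\delta=1/m\to0$ as the relevant parameter tends to $-\infty$, the mollification errors vanish in the limit, and~\eqref{minmax} with $V=\Span\{u\}$ gives, for instance in case~(b),
\begin{displaymath}
	\lambda_1\le-\tfrac12\bigl(\tfrac32\bigr)^3|\beta|^4\bigl(1+O(\varepsilon)\bigr)+O(|\beta|^2),
\end{displaymath}
the $O(|\beta|^2)$ being uniform over $\alpha,\gamma$ in a fixed compact interval; letting $\beta\to-\infty$ and then $\varepsilon\to0$ establishes the $k=1$ assertion, uniformly. For $k\ge2$ I would repeat the construction at $k$ well-separated patches of $\partial\Omega$ (possible since $\partial\Omega$ is infinite when $d\ge2$): the resulting test functions are $L^2$- and $\mathcal{Q}$-orthogonal up to exponentially small errors, so their Rayleigh quotient is controlled on the $k$-dimensional span, and~\eqref{minmax} finishes the argument.

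The hard part is the low regularity of $\partial\Omega$. In contrast to the Robin Laplacian treated in~\cite{dk10}, whose quadratic form involves only first derivatives and therefore tolerates a merely Lipschitz or $C^1$ flattening of the boundary, here $D^2u$ appears, and a test function following the genuine $C^1$ graph $\{x_d=g(x')\}$ need not lie in $H^2$; replacing $g$ by its mollification at precisely the layer scale $\delta=1/m$ repairs this while keeping $\delta\|D^2g_\delta\|_\infty=O(\varepsilon)$, so that the spurious tangential second-order terms stay of relative size $O(\varepsilon)$. The careful bookkeeping of all the error terms, and the verification that they remain of lower order \emph{uniformly} in the bounded parameters (which is what yields the uniformity for $k=1$), is the main technical work. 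I note that in case~(a) one may avoid the boundary entirely, using instead a highly oscillatory test function $\phi(x)\cos(m\,x_1)$ with $\phi\in C_c^\infty(\Omega)$ fixed and $m^2=|\alpha|/2$; since $\mathcal{Q}$ reduces on $H^2_0(\Omega)$ to $\int_\Omega|\Delta u|^2+\alpha|\nabla u|^2$, this already gives $\mathcal{Q}(u,u)/\|u\|_2^2\to-|\alpha|^2/4$, with uniformity in $\beta,\gamma$ immediate as the boundary terms vanish.
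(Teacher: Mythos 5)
Your strategy is right -- test functions concentrated near the boundary, Rayleigh quotient $m^4+\alpha m^2 + 2\beta m^3+2\gamma m$, optimise $m$ -- and your optimal values of $m$ and the resulting constants match the paper's exactly. But your realisation is genuinely different from the paper's. For $k=1$ the paper uses the \emph{global} exponential $\varphi_t(x)=c_te^{t\xi\cdot x}$ (Lemma~\ref{lem:test-function-1}), whose mass concentrates automatically at the highest boundary point in the $\xi$-direction; this yields the \emph{exact} identities $\int_\Omega (1-\sigma)|D^2\varphi_t|^2+\sigma|\Delta\varphi_t|^2 = t^4\int_\Omega|\varphi_t|^2$ and $\int_\Omega|\nabla\varphi_t|^2=t^2\int_\Omega|\varphi_t|^2$, with the factors of $2$ in the boundary terms coming cleanly from the divergence theorem, and the only approximation is showing the ratios $f_1,f_2\to 1$ as $t\to\infty$. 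Your localised layer function with cutoff and mollified graph reproduces the same leading-order quotient but incurs a family of lower-order errors (from $\nabla\chi$, $D^2\chi$, $D^2g_\delta$, $|g-g_\delta|$) whose bookkeeping, as you acknowledge, is the main technical burden; the paper avoids all of that. For $k\ge 2$ the roles reverse: since the global exponential cannot be localised in space, the paper must rely on the technical direction-selection result \cite[Lemma~2.6]{dk10} and run an induction that uses the first $k-1$ eigenfunctions as input, which is precisely why the paper only claims uniformity in $\beta,\gamma$ for $k=1$ and leaves the $k>1$ case as Open Problem~\ref{prob:unif-div}. Your disjoint-patches construction sidesteps the eigenfunctions entirely, so, if carried out carefully, it would give the uniformity for all $k$ -- a genuine improvement. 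Two points need tightening in your $k\ge2$ argument. First, the supports are not actually disjoint: $u_j=\chi_j(x')e^{m(x_d-g_\delta)}$ is cut off only tangentially, so the cylinders around different normal lines can intersect deep inside $\Omega$ (already for the disc all normal lines pass through the centre). The intersections occur at positive distance from $\partial\Omega$, so the overlap integrals are $O(m^4e^{-cm})$ and harmless, but this deserves a line of justification; alternatively (and more cleanly), multiply by a normal cutoff $\eta(x_d-g_\delta(x'))$ vanishing for $x_d-g_\delta<-\rho$, which truly disjoints the supports at the cost of another exponentially small correction. Second, the statement $R=O(m^3)$ ``with a constant independent of $\alpha,\beta,\gamma$'' is a bit loose: the corrections to the $\beta$ and $\gamma$ boundary terms do carry explicit factors of $\beta$ or $\gamma$ (e.g.\ $O(\beta m^2)$), so the precise statement should be that \emph{each} term is of lower order than the respective dominant power in each of the three regimes. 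Finally, your observation that in case (a) one can take $\phi(x)\cos(mx_1)\in H^2_0(\Omega)$ and kill the boundary terms entirely, using the identity $\int_\Omega|D^2u|^2=\int_\Omega|\Delta u|^2$ on $H^2_0$, is correct and simpler than both constructions in that case; it is the mechanism used in \cite{abf19}, which the paper refers to in Remark~\ref{rem:big-neg-alpha} but does not exploit in the proof.
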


For $k>1$ the proof of Theorem~\ref{thm:divergence-1} makes use of a family of test functions chosen in dependence not just on the parameters themselves but on the corresponding eigenfunctions, so that it is not clear whether a uniform control on these test functions can be given; in this context we also refer back to Open Problem~\ref{prob:unif-div}.

\begin{remark}
\label{rem:big-neg-alpha}
The regime $\alpha\to-\infty$ is a partial generalisation of the results contained in \cite{abf19}. There it is proved that, for $\beta=\gamma=+\infty$, the limit inferior is actually a limit and equals precisely $1/4$. While we are unable to prove equality here, we expect it to hold; we recall the related Open Problem~\ref{prob:neg-div-more-precise} which includes, in particular, the question of the optimal constant in this bound. Note that if we have $C=1$ in \eqref{nablacontrol} then the results on the numerical range may be strengthened, and in particular the statement of Theorem~\ref{thm:num-range-eig-div}(a) made more precise, to allow us to obtain precisely $1/4$ as the correct coefficient of $-|\alpha|^2$ in the rate of divergence. While $C=1$ in \eqref{nablacontrol} is probably false in general, we remark that it would suffice if $C \sim 1$ asymptotically for the particular choice of the first eigenfunction. We also emphasise that the proof of the theorem provides the explicit bound
\begin{displaymath}
	\lambda_1 (\Omega, \sigma, \alpha, 0, 0) < -\frac{\alpha^2}{4}
\end{displaymath}
for all $\alpha < 0$ for the first Neumann eigenvalue, to be contrasted with the result of \cite[Section~4]{abf19} that the first Dirichlet (or Navier) eigenvalue is always \emph{larger} than $-\alpha^2/4$. It would be interesting to understand where the first eigenvalues of the other problems are with respect to that curve. 
\end{remark}

\begin{problem}
Study whether $\lambda_1 (\Omega, \sigma, \alpha, \beta,\gamma)$ (or, more generally, $\lambda_k (\Omega, \sigma, \alpha, \beta,\gamma)$) is larger or smaller than the quantity $-\frac{\alpha^2}{4}$, for all the possible combinations of the other parameters, namely $\Omega$, $\sigma$, $\beta$, and $\gamma$.
\end{problem}

\begin{remark}\label{rem:intermediate-regime}
Although we expect the asymptotic inequalities of Theorem \ref{thm:divergence-1} to hold also in the cases where $\beta$ and/or $\gamma=+\infty$, our proof does not cover those situations. This is due to the fact that the key exponential test function argument used in Lemma~\ref{lem:test-function-1} below does not work if we consider spaces where some boundary trace is prescribed, which is the case for instance of $H^2_0(\Omega)$ or $H^2(\Omega)\cap H^1_0(\Omega)$. Indeed, the construction of test functions for those spaces becomes extremely difficult, and we suspect that a completely different approach is needed, such as for instance the one used in \cite{abf19}.
\end{remark}

\begin{problem}
\label{op:divergence-rates}
Obtain a version of Theorem~\ref{thm:divergence-1}, or more generally Theorem~\ref{thm:divergence-basic}, when one or both of the parameters $\beta,\gamma$ are $+\infty$. This may come for free from a Dirichlet-Neumann bracketing approach (cf.\ Open Problem~\ref{prob:neg-div-more-precise}).
\end{problem}

\begin{remark}\label{rem:interplay}
Theorem \ref{thm:divergence-1} deals with the situation in which there is only one diverging parameter while the others remain controlled (recall that the cases in which $\beta$ and/or $\gamma$ go to plus infinity are actually convergent cases). If instead we allow more parameters to diverge at the same time, the behaviour of the eigenvalues become more involved, depending also on the interplay between the parameters. Such a situation of several diverging parameters can, in principle, also be studied using the same tools, but it is not clear a priori how many types of specific behaviours could be observed.
\end{remark}

\begin{problem}
Study the case(s) where more than one parameter diverges to $-\infty$, or one diverges to $-\infty$ and another to $+\infty$. Is it possible to fine-tune the divergent parameters in order to get only finitely many divergent eigenvalues?
\end{problem}

The main tool in the proof of Theorem~\ref{thm:divergence-1} is the following estimate on the form, in the special case of certain exponential test functions inspired by \cite[Lemma~2.1]{dk10} (which was in turn inspired by \cite[Theorem~2.3]{gs07}, see also \cite[Example~2.4]{lp08}).

\begin{lemma}
\label{lem:test-function-1}
Fix $\alpha,\beta,\gamma \in \R$ and $\sigma \in (-\frac{1}{d-1},1)$, let $\Omega \subset \mathbb R^d$ be a bounded $C^1$-domain, let $\xi \in \mathbb S^{d-1}$ be any unit vector, and for $t>0$ set $\varphi_t (x) := c_t e^{t\xi \cdot x}$, where $c_t>0$ is chosen so that $\|\varphi_t\|_{L^2(\Omega)}=1$. Then there exist functions $f_1, f_2 : (0,+\infty) \to \R$ such that $f_1 (t), f_2 (t) \to 1$ as $t \to \infty$, and
\begin{equation}
\label{test-function-upper-est}
	\mathcal{Q}_{\Omega,\sigma,\alpha,\beta,\gamma} (\varphi_t,\varphi_t) \leq t^4 + \alpha t^2 + 2\beta f_1(t) t^3 + 2\gamma f_2(t) t
\end{equation}
for all $t>0$.
\end{lemma}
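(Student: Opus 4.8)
The plan is to compute each term in $\mathcal{Q}_{\Omega,\sigma,\alpha,\beta,\gamma}(\varphi_t,\varphi_t)$ for the explicit exponential $\varphi_t(x) = c_t e^{t\xi\cdot x}$ and identify the leading asymptotics. First I would compute the relevant derivatives: $\nabla\varphi_t = t\xi\,\varphi_t$, so $D^2\varphi_t = t^2(\xi\otimes\xi)\varphi_t$ and $\Delta\varphi_t = t^2|\xi|^2\varphi_t = t^2\varphi_t$. Hence $D^2\varphi_t : D^2\varphi_t = t^4(\xi\otimes\xi):(\xi\otimes\xi)\varphi_t^2 = t^4\varphi_t^2$ (using $|\xi|=1$), and $(\Delta\varphi_t)^2 = t^4\varphi_t^2$ as well. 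Therefore the leading volume term is $\int_\Omega\big((1-\sigma)t^4 + \sigma t^4\big)\varphi_t^2\,dx = t^4\int_\Omega\varphi_t^2\,dx = t^4$ by the normalisation $\|\varphi_t\|_{L^2(\Omega)}=1$; note the nice cancellation of the Poisson ratio. Similarly $\alpha\int_\Omega|\nabla\varphi_t|^2\,dx = \alpha t^2\int_\Omega\varphi_t^2\,dx = \alpha t^2$, giving the second term exactly.

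The boundary terms are where the functions $f_1,f_2$ enter. On $\partial\Omega$ we have $\frac{\partial\varphi_t}{\partial\nu} = t(\xi\cdot\nu)\varphi_t$, so $\beta\int_{\partial\Omega}|\partial_\nu\varphi_t|^2\,d\mathcal{H}^{d-1} = \beta t^2\int_{\partial\Omega}(\xi\cdot\nu)^2\varphi_t^2\,d\mathcal{H}^{d-1}$ and $\gamma\int_{\partial\Omega}\varphi_t^2\,d\mathcal{H}^{d-1}$. The key point, exactly as in \cite[Lemma~2.1]{dk10}, is that the normalised mass of $\varphi_t^2$ concentrates near the part of $\partial\Omega$ where $\xi\cdot\nu$ is maximal as $t\to\infty$; for a $C^1$-domain this maximum equals $1$ (attained where the outer normal is parallel to $\xi$), and a Laplace-type asymptotic analysis of the ratio $\int_{\partial\Omega}\varphi_t^2\,d\mathcal{H}^{d-1}\big/\int_\Omega\varphi_t^2\,dx$ shows it behaves like $2t(1+o(1))$. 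Concretely I would set
\begin{displaymath}
	f_1(t) := \frac{t}{2}\int_{\partial\Omega}(\xi\cdot\nu)^2\varphi_t^2\,d\mathcal{H}^{d-1}(x), \qquad f_2(t) := \frac{1}{2t}\int_{\partial\Omega}\varphi_t^2\,d\mathcal{H}^{d-1}(x),
\end{displaymath}
so that the boundary contributions are \emph{exactly} $2\beta f_1(t)t^3$ and $2\gamma f_2(t)t$, and then prove $f_1(t),f_2(t)\to 1$.

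To establish $f_i(t)\to 1$, I would flatten the boundary locally: near the point $z_0\in\partial\Omega$ where $\nu(z_0)=\xi$, write $\partial\Omega$ as a graph $x_d = g(x')$ of a $C^1$ function with $\nabla g(z_0')=0$; the dominant contribution to both the volume and surface integrals comes from a shrinking neighbourhood of $z_0$, and a direct change of variables plus the substitution $s = t(\text{something})$ reduces everything to elementary one-dimensional integrals of $e^{-2s}$. The $C^1$ hypothesis guarantees that $(\xi\cdot\nu)^2 = 1+o(1)$ uniformly on the relevant shrinking neighbourhood, which is what forces $f_1(t)\to 1$ rather than merely $\limsup f_1(t)\le 1$. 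Away from $z_0$ the integrand $\varphi_t^2$ is exponentially smaller and contributes negligibly to the normalised ratios.

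The main obstacle is the careful asymptotic evaluation of these concentration integrals with enough uniformity — in particular making the localisation rigorous on a Lipschitz-graph-with-$C^1$-regularity boundary, and checking that the error from replacing $(\xi\cdot\nu)^2$ by $1$ and from the curvature of $\partial\Omega$ is genuinely $o(1)$ and not merely $O(1)$. This is essentially the content of \cite[Theorem~2.3]{gs07} and \cite[Lemma~2.1]{dk10}, adapted here to the slightly different combination of powers of $t$; once the one-dimensional model computation is set up the rest is bookkeeping. Everything else — the interior derivative computations and the cancellation of $\sigma$ — is routine.
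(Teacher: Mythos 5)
Your interior computation and the overall strategy (define $f_1,f_2$ as the normalised boundary integrals so that \eqref{test-function-upper-est} holds by construction, then prove $f_1(t),f_2(t)\to 1$) are the same as the paper's, but your definition of $f_1$ carries the wrong power of $t$: since $\frac{\partial \varphi_t}{\partial\nu}=t(\xi\cdot\nu)\varphi_t$, the boundary term is $\beta t^2\int_{\partial\Omega}(\xi\cdot\nu)^2\varphi_t^2\,d\mathcal{H}^{d-1}(x)$, so for this to equal $2\beta f_1(t)t^3$ you must take $f_1(t)=\frac{1}{2t}\int_{\partial\Omega}(\xi\cdot\nu)^2\varphi_t^2\,d\mathcal{H}^{d-1}(x)$; with your prefactor $\frac{t}{2}$ the claimed identity is off by a factor $t^2$ and moreover $f_1(t)\to+\infty$, not $1$. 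This is an easily repaired slip (your $f_2$ is the correct, and the paper's, normalisation).

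The genuine issue is in the step $f_1,f_2\to 1$, where you depart from the paper. You localise at ``the'' point $z_0$ with $\nu(z_0)=\xi$ and assert that away from $z_0$ the contribution is exponentially negligible. But the mass of $\varphi_t^2$ concentrates on the set where $\xi\cdot x$ attains its maximum over $\overline\Omega$, and this set need not be a single point (think of a flat face orthogonal to $\xi$), so the single-point Laplace analysis fails as stated; moreover, with only $C^1$ regularity there is no curvature or modulus-of-continuity information with which to run a clean Laplace-type evaluation of $\int_{\partial\Omega}\varphi_t^2\sim 2t$. The paper never proves such a surface-to-volume asymptotic directly: choosing coordinates with $\xi=e_d$ and applying the divergence theorem to $F=(0,\ldots,0,e^{2tx_d})$ gives the exact identity $\int_{\partial\Omega}e^{2tx_d}\nu_d\,d\mathcal{H}^{d-1}(x)=2t\int_\Omega e^{2tx_d}\,dx$, which reduces $f_1,f_2\to1$ to showing that $\int_{\partial\Omega}e^{2tx_d}\nu_d^2$ and $\int_{\partial\Omega}e^{2tx_d}$ are each asymptotic to $\int_{\partial\Omega}e^{2tx_d}\nu_d$; this in turn follows from concentration of $e^{2tx_d}$ on the upper level sets $K_\delta=\{x\in\partial\Omega:\ \max_{\partial\Omega}x_d-x_d\le\delta\}$ (as in \cite[Lemma~2.5]{dk10}) combined with continuity of $\nu$ on the $C^1$ boundary, which gives $\nu_d\ge 1-\delta$ on $K_\delta$ and $\nu_d\ge-1$ elsewhere — an argument insensitive to the structure of the maximising set. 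If you insist on your direct route you would have to cover the whole maximising set by graph patches and prove the concentration estimate on each, essentially re-deriving the cited lemma; the divergence-theorem identity is the cheaper and more robust path, and is precisely what makes the $C^1$ hypothesis (continuity of $\nu$) sufficient.
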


We will see in the course of the proof that we may choose $f_1 \equiv 1$ if $\beta \geq 0$, and $f_2 \equiv 1$ if $\gamma \leq 0$.

\begin{proof}
Let $\tilde \varphi_t (x) := e^{t\xi \cdot x}$ be the corresponding ``non-normalised'' function. A short calculation gives $|D^2 \varphi_t (x)|^2 = |\Delta \varphi_t (x)|^2 = t^4 e^{2t\xi\cdot x}$, whence
\begin{equation}
\label{test-function-calc-1}
	\int_\Omega (1-\sigma) |D^2 \tilde\varphi_t |^2 + \sigma |\Delta \tilde\varphi_t |^2\,dx = t^4 \int_\Omega e^{2t\xi\cdot x}\,dx
	= t^4 \int_\Omega |\tilde\varphi_t|^2\,dx,
\end{equation}
while
\begin{equation}
\label{test-function-calc-2}
	\int_\Omega |\nabla \tilde\varphi_t |^2\,dx = t^2\int_\Omega e^{2t\xi \cdot x}\,dx = t^2 \int_\Omega |\tilde\varphi_t |^2\,dx.
\end{equation}
For the boundary integrals, if we combine the definitions
\begin{displaymath}
	f_1(t) := \frac{\int_{\partial\Omega} |\frac{\partial \tilde\varphi_t}{\partial\nu}|^2\,d\mathcal{H}^{d-1}(x)}
	{2t^3\int_\Omega |\tilde\varphi_t |^2\,dx} > 0, \qquad f_2(t) := 
	\frac{\int_{\partial\Omega} |\tilde\varphi_t|^2\,d\mathcal{H}^{d-1}(x)}{2t\int_\Omega |\tilde\varphi_t |^2\,dx} > 0,
\end{displaymath}
with \eqref{test-function-calc-1} and \eqref{test-function-calc-2}, then we clearly obtain \eqref{test-function-upper-est}.

It remains to prove that $f_1(t), f_2(t) \to 1$ as $t \to \infty$. For this, we choose the coordinate system in such a way that $\xi = (0,\ldots,0,1)$, which means that $\tilde\varphi_t(x)=e^{tx_d}$ and
\begin{equation}
\label{test-function-calc-3}
	f_1(t) = \frac{t^2\int_{\partial\Omega} e^{2tx_d}\nu_d^2\,d\mathcal{H}^{d-1}(x)}{2t^3\int_\Omega e^{2tx_d}\,dx},
	\qquad f_2(t) = \frac{\int_{\partial\Omega} e^{2tx_d}\,d\mathcal{H}^{d-1}(x)}{2t\int_\Omega e^{2tx_d}\,dx}.
\end{equation}
Now we observe that if we apply the Divergence Theorem on $\Omega$ to the vector field $F:= (0,\ldots,0,e^{2tx_d}) \in C^\infty (\R^d,\R^d)$ we obtain
\begin{displaymath}
	\int_{\partial\Omega} e^{2tx_d}\nu_d\,d\mathcal{H}^{d-1}(x) = \int_{\partial\Omega} F\cdot \nu_d\,d\mathcal{H}^{d-1}(x)
	= \int_\Omega \divergence F\,dx = 2t\int_\Omega e^{2tx_d}\,dx.
\end{displaymath}
Hence, combining this with \eqref{test-function-calc-3}, to show that $f_1(t),f_2(t) \to 1$ and hence complete the proof of the lemma, it suffices to show that the ratios
\begin{displaymath}
	\frac{\int_{\partial\Omega} e^{2tx_d}\nu_d^2\,d\mathcal{H}^{d-1}(x)}{\int_{\partial\Omega} e^{2tx_d}\nu_d\,d\mathcal{H}^{d-1}(x)} \qquad
	\text{and} \qquad \frac{\int_{\partial\Omega} e^{2tx_d}\,d\mathcal{H}^{d-1}(x)}{\int_{\partial\Omega} e^{2tx_d}\nu_d\,d\mathcal{H}^{d-1}(x)}
\end{displaymath}
both converge to $1$ as $t \to \infty$. We will only prove this for the second ratio; the first is entirely analogous. The idea is that the mass of $\tilde\varphi_t$ concentrates exponentially in the part of $\partial\Omega$ with the largest $x_d$-values; here, since $\Omega$ is $C^1$ and hence $\nu$ is continuous, $\nu_d$ is uniformly close to $1$ near these points. Formally, we argue as in \cite[Lemma~2.5]{dk10} (see also \cite[Lemma~2.4]{dk10} for properties of the level sets of $\tilde\varphi_t$).

Fix $\varepsilon > 0$ and suppose that $\tilde z = (\tilde z_1, \ldots, \tilde z_d) \in \partial\Omega$ has the largest $x_d$-coordinate of any point in $\partial\Omega$, that is, $\tilde z_d = \{\max x_d : x = (x_1,\ldots,x_d) \in \partial\Omega\}$. Denote by
\begin{displaymath}
	K_s := \{ x = (x_1,\ldots,x_d) \in \partial\Omega : \tilde z_d - x_d \leq s \},
\end{displaymath}
$s \in [0,\infty)$; then the $K_s$ are exactly the (closed) upper level sets of $\tilde \varphi_t$ in $\partial\Omega$. Since $\Omega$ is of class $C^1$, $\nu_d : \partial\Omega \to [-1,1]$ is continuous and equal to one on $K_0$. Fix $\delta>0$, to be specified precisely later, such that $\nu_d \geq 1-\delta$ on $K_\delta$. By the same argument as in \cite[Lemma~2.5]{dk10} there exists $t_0 > 0$ such that
\begin{displaymath}
	\int_{\partial\Omega \setminus K_\delta} e^{2tx_d}\,d\mathcal{H}^{d-1}(x) < \delta\int_{\partial\Omega} e^{2tx_d}\,d\mathcal{H}^{d-1}(x)
\end{displaymath}
for all $t \geq t_0$. Using that $\nu_d \geq -1$ at every point in $\partial\Omega \setminus K_\delta$ (due to the normalisation $|\nu| = 1$), it follows that
\begin{equation*}
\begin{split}
	\int_{\partial\Omega} e^{2tx_d}\nu_d\,d\mathcal{H}^{d-1}(x)
	&=\int_{K_\delta} e^{2tx_d}\nu_d\,d\mathcal{H}^{d-1}(x) + \int_{\partial\Omega\setminus K_\delta} e^{2tx_d}\nu_d\,d\mathcal{H}^{d-1}(x)\\
	&\geq (1-\delta)\int_{K_\delta} e^{2tx_d}\,d\mathcal{H}^{d-1}(x) - \int_{\partial\Omega\setminus K_\delta} e^{2tx_d}\,d\mathcal{H}^{d-1}(x)\\
	&>(1-\delta)^2\int_{\partial\Omega}e^{2tx_d}\,d\mathcal{H}^{d-1}(x) - 
		\delta\int_{\partial\Omega}e^{2tx_d}\,d\mathcal{H}^{d-1}(x)
\end{split}
\end{equation*}
for all $t \geq t_0$. Choosing $\delta$ such that $(1-\delta)^2-\delta > 1/(1+\varepsilon)$ yields
\begin{displaymath}
	1 \leq \frac{\int_{\partial\Omega} e^{2tx_d}\,d\mathcal{H}^{d-1}(x)}{\int_{\partial\Omega} e^{2tx_d}\nu_d\,d\mathcal{H}^{d-1}(x)} < 1+\varepsilon
\end{displaymath}
for all $t \geq t_0$, whence the claim.
\end{proof}

\begin{proof}[Proof of Theorem~\ref{thm:divergence-1}]
We start by showing that in each case (a), (b), (c) it is possible to choose $t$ in such a way that the Rayleigh quotient of the function $\varphi_t$ from Lemma~\ref{lem:test-function-1} yields the respective estimates. This will imply Theorem~\ref{thm:divergence-1} for $\lambda_1$. For general $k$, we will choose $k$ different vectors $\xi_1,\ldots,\xi_k$ and repeat the argument for the corresponding functions $\varphi_{\xi_1,t},\ldots,\varphi_{\xi_k,t}$.

For (a), we choose $t = (|\alpha|/2)^{1/2}$ (this corresponds to the unique solution of $\argmin_{t \ge 0} t^4 + \alpha t^2$) in \eqref{test-function-upper-est} to obtain
\begin{equation*}
	\frac{\lambda_1}{|\alpha|^2} \leq \frac{\mathcal{Q}_{\Omega,\sigma,\alpha,\beta,\gamma} (\varphi_t,\varphi_t)}{|\alpha|^{2}} 
	\leq -\frac{1}{4} + 2\beta f_1 \left(\frac{|\alpha|^{1/2}}{2^{1/2}}\right) \frac{1}{2^{3/2}} \frac{1}{|\alpha|^{1/2}}
	+2\gamma f_2 \left(\frac{|\alpha|^{1/2}}{2^{1/2}}\right) \frac{1}{2^{1/2}} \frac{1}{|\alpha|^{3/2}}
\end{equation*}
for all $\alpha < 0$ (note that this reduces to the explicit estimate $\lambda_1 < -|\alpha|^2/4$ if $\beta=\gamma=0$, where the strictness of the inequality follows from the fact that $\varphi_t$ cannot be an eigenfunction as it does not satisfy the boundary conditions). Since the terms involving $\beta$ and $\gamma$ tend to zero uniformly as long as $\beta$ and $\gamma$ remain within a compact range of values, passing to the limit as $\alpha \to -\infty$ yields (a) when $k=1$. For (b), the argument is similar, but we choose $t = 3|\beta|/2$, corresponding to $\argmin_{t \ge 0} t^4 + 2\beta t^3$, to obtain 
\begin{equation*}
	\frac{\lambda_1}{|\beta|^4} \leq \frac{\mathcal{Q}_{\Omega,\sigma,\alpha,\beta,\gamma} (\varphi_t,\varphi_t)}{|\beta|^{4}}
	\leq \left(\frac{3}{2}\right)^4 + \alpha \left(\frac{3}{2}\right)^2\frac{1}{|\beta|^2}
	- 2 f_1 \left(\frac{3|\beta|}{2}\right) \left(\frac{3}{2}\right)^3 + 2\gamma f_2 \left(\frac{3|\beta|}{2}\right) \frac{3}{2}\,\frac{1}{|\beta|^3}.
\end{equation*}
Since the terms involving $\alpha$ and $\gamma$ tend to zero uniformly for bounded $\alpha$ and $\gamma$ and $f_1 \to 1$ as $\beta \to -\infty$, passing to the limit yields (b) when $k=1$. For (c), we choose $t = (|\gamma|/2)^{1/3}$ (corresponding to $\argmin_{t \ge 0} t^4 + 2\gamma t$), which leads to
\begin{multline}
\label{gamma-test-upper}
	\frac{\lambda_1}{|\gamma|^{4/3}} \leq \frac{\mathcal{Q}_{\Omega,\sigma,\alpha,\beta,\gamma} (\varphi_t,\varphi_t)}{|\gamma|^{4/3}}
	 \leq \left(\frac{1}{2^{1/3}}\right)^4 + \alpha \left(\frac{1}{2^{1/3}}\right)^2 \frac{1}{|\gamma|^{2/3}} \\
	+ 2\beta f_1\left(\frac{|\gamma|^{1/3}}{2^{1/3}}\right) \left(\frac{1}{2^{1/3}}\right)^3 \frac{1}{|\gamma|^{1/3}}
	- 2 f_2\left(\frac{|\gamma|^{1/3}}{2^{1/3}}\right) \frac{1}{2^{1/3}}.
\end{multline}
Since the terms involving $\alpha$ and $\beta$ tend to zero uniformly for bounded $\alpha$ and $\beta$ and $f_2 \to 1$ as $\gamma \to -\infty$, passing to the limit yields (c) when $k=1$.

We now consider the case $k \geq 2$ and argue as in \cite{dk10}. The argument is by induction on $k$; we assume that the theorem is true for $k-1$ and consider $k$. We will also restrict ourselves to the case $\gamma \to -\infty$ for bounded $\alpha,\beta$; the other two cases are completely analogous. It suffices to prove that for any sequence $\gamma_n \to -\infty$ there exists a subsequence for which (c) holds for $\lambda_k$ and this subsequence. We fix such a sequence $\gamma_n$, fix some small $\delta > 0$, and denote by $u_1(\gamma_n), \ldots, u_{k-1}(\gamma_n)$ any corresponding first $k-1$ eigenfunctions, all normalised to have unit $L^2(\Omega)$-norm.

By \cite[Lemma~2.6]{dk10} there exists a direction $\xi \in \mathbb{S}^{d-1}$ such that, up to a subsequence in $n$, the corresponding function $\varphi_n := \varphi_{t_n} = ce^{t_n \xi \cdot x}$ of Lemma~\ref{lem:test-function-1} with $t_n := (|\gamma_n|/2)^{1/3}$ satisfies
\begin{equation}
\label{almost-orthogonal-test-function}
	\sum_{i=1}^{k-1} \left[\int_\Omega u_i \varphi_{n} \,dx\right]^2 \leq \delta
\end{equation}
for all $n$. The existence of such a direction $\xi$ is a rather technical result; intuitively, it follows since any such function $\varphi$ concentrates exponentially near a small part of the boundary (in dependence on $\xi$), while the $k-1$ eigenfunctions $u_1,\ldots,u_{k-1}$ cannot be large everywhere near the boundary. Observe that the function
\begin{displaymath}
	\varphi_{n} - \sum_{i=1}^{k-1} \left(\int_\Omega u_i \varphi_{n} \,dx\right) u_i ,
\end{displaymath}
is orthogonal to $u_1,\ldots,u_{k-1}$ in $L^2(\Omega)$ by construction, so we can take it as a test function for $\lambda_k$ and it follows that (cf.\ \cite[Lemma~2.3]{dk10})
\begin{displaymath}
	\lambda_k \leq \frac{\mathcal{Q}_{\Omega,\sigma,\alpha,\beta,\gamma_n}(\varphi_{n},\varphi_{n} )
	- \sum_{i=1}^{k-1}\lambda_i\left[\int_\Omega u_i \varphi_{n} \,dx\right]^2}{1-\sum_{i=1}^{k-1} \left[\int_\Omega u_i \varphi_{n} \,dx\right]^2}
\end{displaymath}
for all $n$. Using the estimate \eqref{almost-orthogonal-test-function} in the denominator and then normalising by $|\gamma_n|^{4/3}$,
\begin{displaymath}
	\frac{\lambda_1}{|\gamma_n|^{4/3}} \leq \frac{\lambda_k}{|\gamma_n|^{4/3}} 
	\leq \frac{1}{1-\delta}\left[\frac{\mathcal{Q}_{\Omega,\sigma,\alpha,\beta,\gamma_n}
	(\varphi_{n},\varphi_{n})}{|\gamma_n|^{4/3}}-\sum_{i=1}^{k-1}
	\frac{\lambda_i}{|\gamma_n|^{4/3}}\left[\int_\Omega u_i\varphi_{n}\,dx\right]^2\right].
\end{displaymath}
By the induction assumption and Theorem~\ref{thm:num-range-eig-div}, $\lambda_i/|\gamma|^{4/3}$ remains bounded as $\gamma \to -\infty$. Hence there exists a constant $C_k>0$ depending only on $k$ such that
\begin{displaymath}
	\left|\sum_{i=1}^{k-1}\frac{\lambda_i}{|\gamma_n|^{4/3}}\left[\int_\Omega u_i\varphi_{n}\,dx\right]^2\right|
	\leq C_k \sum_{i=1}^{k-1}\left[\int_\Omega u_i\varphi_{n}\,dx\right]^2 \leq C_k \delta
\end{displaymath}
for all $n$. Putting this all together,
\begin{displaymath}
	\frac{\lambda_1}{|\gamma_n|^{4/3}} \leq \frac{\lambda_k}{|\gamma_n|^{4/3}} 
	\leq \frac{1}{1-\delta}\frac{\mathcal{Q}_{\Omega,\sigma,\alpha,\beta,\gamma_n}
	(\varphi_{n},\varphi_{n})}{|\gamma_n|^{4/3}} + \frac{C_k \delta}{1-\delta}.
\end{displaymath}
Finally, we use the upper estimate \eqref{gamma-test-upper} on
\begin{displaymath}
	\frac{\mathcal{Q}_{\Omega,\sigma,\alpha,\beta,\gamma}(\varphi_{t},\varphi_{t})}{|\gamma|^{4/3}}
\end{displaymath}
obtained above for $\gamma=\gamma_n$ and $t=t_n=(|\gamma_n|/2)^{1/3}$. This leads to
\begin{displaymath}
	\liminf_{\gamma \to -\infty} \frac{\lambda_k}{-|\gamma|^{4/3}} \geq \frac{1}{1-\delta} \left(\frac{1}{2}\right)^{1/3}\frac{3}{2} 
	- \frac{C_k \delta}{1-\delta}.
\end{displaymath}
Passing to the limit as $\delta \to 0$ yields the desired estimate on $\lambda_k$. 
\end{proof}


\section{Hadamard-type formulae and criticality results}
\label{sec:shape-derivatives}

In this section we turn to the study of the dependence of the eigenvalues of problem \eqref{robinstrong} on the domain. As already mentioned in the Introduction, we know that multiple eigenvalues tend to show bifurcations, so that, in order to avoid any problem with non-differentiability, we pass to the use of elementary symmetric functions of the eigenvalues, since it has the advantage of bypassing splitting phenomena (cf.\ \cite{lala2007,lala2004}). For the sake of exposition, we will focus here just on the Full Robin problem \eqref{robin1strong}, but everything can be replicated also for the Navier--Robin problem \eqref{navier-robin} and for the Kuttler--Sigillito problem \eqref{kuttler-sigillito}. In particular, the strategy of this section will be the same introduced in \cite{lala2007,lala2004} and later applied to other Bilaplacian problems (see \cite{buoso16} and the references therein), that is we consider problem (\ref{robinstrong}) on a family of open sets parametrised by suitable diffeomorphisms $\phi$ defined on a fixed bounded open set $\Omega $ in ${\mathbb{R}}^d$, and then compute the Fr\'echet differential of elementary symmetric functions of the eigenvalues with respect to $\phi$. Note that the underlying operator (the Bilaplacian) is the same as in  \cite{buoso16}; hence, as is natural to expect, the formulae we obtain here will bear a resemblance to those presented in  \cite{buoso16}. Moreover, a number of foundational calculations we will need here were already carried out in  \cite{buoso16}; we will cite these directly rather than reproducing them in detail.

\medskip
\textbf{An Hadamard-type formula.} Let us fix a bounded open set $\Omega $ in ${\mathbb{R}}^d$ with Lipschitz boundary, and consider the following family of diffeomorphisms 
$$
{\mathcal{A}}_{\Omega }=\biggl\{\phi\in C^2(\overline\Omega\, ; {\mathbb{R}}^d ):\ \inf_{\substack{x_1,x_2\in \overline\Omega \\ x_1\ne x_2}}\frac{|\phi(x_1)-\phi(x_2)|}{|x_1-x_2|}>0 \biggr\},
$$
where $C^2(\overline\Omega\, ; {\mathbb{R}}^d )$ denotes the space of all functions from $\overline\Omega $ to ${\mathbb{R}}^d$ of class $C^2$.  Note that if $\phi \in {\mathcal{A}}_{\Omega }$ then $\phi $ is injective, Lipschitz continuous and $\inf_{\overline\Omega }|{\rm det }\nabla \phi |>0$. Moreover, $\phi (\Omega )$ is a bounded open set with Lipschitz bondary and the inverse map $\phi^{(-1)}$ belongs to  ${\mathcal{A}}_{\phi(\Omega )}$.  Thus it is possible to consider problem (\ref{robinstrong}) on $\phi (\Omega )$ and study  the dependence of $\lambda_k(\phi (\Omega ))$ on $\phi \in {\mathcal{A}}_{\Omega }$. To do so, we endow the space $C^2(\overline\Omega\, ; {\mathbb{R}}^d )$ with its usual norm.
Note that ${\mathcal{A}}_{\Omega }$ is an open set in  $C^2(\overline\Omega\, ;{\mathbb{R}}^d )$, see \cite[Lemma~3.11]{lala2004}.
    Hence, we can now study differentiability and analyticity properties of the maps $\phi \mapsto \lambda_k(\phi (\Omega ))$ defined for $\phi \in {\mathcal{A}}_{\Omega }$.
   For simplicity, we write  $\lambda_k(\phi )$ instead of $\lambda_k(\phi (\Omega ))$.
   We fix a finite set of indices $F\subset \mathbb{N}$
and we consider those maps $\phi\in {\mathcal{A}}_{\Omega }$ for which the eigenvalues
with indices in $F$  do not coincide with eigenvalues with indices not
in $F$ (i.e., we split the spectrum into two different sets that never intersect),
$$
{\mathcal { A}}_{F, \Omega }= \left\{\phi \in {\mathcal { A}}_{\Omega }:\
\lambda_k(\phi )\ne \lambda_l(\phi),\ \forall\  k\in F,\,   l\in \mathbb{N}\setminus F
\right\}.
$$
It is also convenient to consider those maps $\phi \in {\mathcal { A}}_{F, \Omega } $ such that all the eigenvalues with index in $F$
coincide and set
$$
\Theta_{F, \Omega } = \left\{\phi\in {\mathcal { A}}_{F, \Omega }:\ \lambda_{k_1}(\phi)=\lambda_{k_2}(\phi),\, \
\forall\ k_1,k_2\in F  \right\} .
$$

 For $\phi \in {\mathcal { A}}_{F, \Omega }$, the elementary symmetric functions of the eigenvalues with index in $F$ are defined by
\begin{equation*}
\Lambda_{F,s}(\phi)=\sum_{ \substack{ k_1,\dots ,k_s\in F\\ k_1<\dots <k_s} }
\lambda_{k_1}(\phi )\cdots \lambda_{k_s}(\phi ),\ \ \ s=1,\dots , |F|.
\end{equation*}
It is worth noting that, when $|F|=1$, i.e., for $F=\{k\}$, then $\Lambda_{F,s}(\phi)=\lambda_k(\phi)$.

In addition, we consider the bilinear form defined in \eqref{qf} as an operator defined on $H^2(\Omega)$ with values in its dual:
$$
P[u][v]=Q(u,v).
$$
Here we separate the two arguments $u,v\in H^2(\Omega)$ to highlight that $P$ takes the argument $u\in H^2(\Omega)$ to the image $P[u]$ which is a functional in the dual of $H^2(\Omega)$ acting like $Q(u,\cdot)$. In the same manner, we define the operator (mapping $H^2(\Omega)$ to its dual)
$$
J[u][v]=\int_\Omega uv \,dx,
$$
so that the eigenvalue problem \eqref{robinweak} can be rewritten in a distributional sense as
\begin{equation*}
P[u]=\lambda J[u].
\end{equation*}

We have the following 

\begin{theorem}
\label{duesettesys}
Let $\Omega $ be a Lipschitz bounded open set in ${\mathbb{R}}^d$ and  $F$ be a finite set in  ${\mathbb{N}}$. 
The set ${\mathcal { A}}_{F, \Omega }$ is open in
$\mathcal{A}_{\Omega}$, and the real-valued maps 
$ \Lambda_{F,s}$ are real-analytic on  ${\mathcal { A}}_{F, \Omega }$, for all $s=1,\dots , |F|$. 
Moreover, if $\tilde \phi\in \Theta_{F, \Omega }  $ is such that the eigenvalues $\lambda_k(\tilde \phi)$ assume the common value $\lambda_F(\tilde \phi )$ for all $k\in F$, and  $\tilde\Omega=\tilde \phi (\Omega )$ is of class $C^{4}$ then  the Fr\'{e}chet differential of the map $\Lambda_{F,s}$ at the point $\tilde\phi $ is given by the formula
\begin{equation}
\label{derivdsys}
d|_{\phi=\tilde{\phi}}(\Lambda_{F,s})[\psi]
=	\lambda_F^{s-1}(\tilde{\phi})\binom{|F|-1}{s-1} \sum_{l=1}^{|F|}\int_{\partial\tilde\Omega} G(v_l)\zeta\cdot\nu \,d\mathcal{H}^{d-1}(x),
			\end{equation}
for all $\psi\in C^2(\overline\Omega;\mathbb{R}^d)$, where $\zeta=\psi\circ\tilde{\phi}^{-1}$, $K$ is the mean curvature of $\partial\tilde{\phi}(\Omega)$ (i.e.\ $K= \divergence \nu$), $\{v_l\}_{l\in F}$ is an orthonormal basis in $H^2(\tilde \phi (\Omega ))$ of the eigenspace associated with $\lambda_F(\tilde \phi)$ (the orthonormality being taken with respect to the $L^2$-inner product), and $G(v)$ is given by the formula 
\begin{multline}
\label{Gvl} 
G(v)=(1-\sigma)|D^2 v|^2+\sigma(\Delta v)^2+\alpha|\nabla v|^2\\
			+2\beta \frac{\partial v}{\partial\nu}\Delta v +2\beta \nabla \frac{\partial v}{\partial\nu}\nabla v
			-\beta K  \left(\frac{\partial v}{\partial\nu}\right)^2-\beta\frac{\partial v}{\partial\nu}\frac{\partial^2 v}{\partial\nu^2}
			+\gamma K v^2+2 \gamma v\frac{\partial v}{\partial\nu}.
\end{multline}
\end{theorem}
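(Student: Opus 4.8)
The plan is to follow the strategy of Lamberti--Lanza de Cristoforis \cite{lala2004,lala2007} as developed for biharmonic operators in \cite{buoso16}, the only genuinely new ingredients being the shape derivatives of the two boundary integrals in \eqref{qf}.

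\emph{Step 1 (pull-back and abstract analyticity).} For $\phi\in\mathcal{A}_\Omega$, the change of variables $y=\phi(x)$ turns $\mathcal{Q}_{\phi(\Omega),\sigma,\alpha,\beta,\gamma}(u,v)$ and $\int_{\phi(\Omega)}uv\,dy$ into sesquilinear forms on the fixed space $H^2(\Omega)$ whose coefficients are rational expressions in $\nabla\phi$, $D^2\phi$ and $\det\nabla\phi$ in the interior and in the surface Jacobian and the transformed normal on the boundary; in particular the associated operators $P_\phi,J_\phi\colon H^2(\Omega)\to H^2(\Omega)^\ast$ depend real-analytically on $\phi$. Openness of $\mathcal{A}_{F,\Omega}$ in $\mathcal{A}_\Omega$ follows from the continuity of each $\lambda_k$ in $\phi$ (via analytic perturbation theory, as in Theorem~\ref{lem:continuity-monotonicity}) together with the observation that only finitely many crossings with eigenvalues of index outside $F$ have to be excluded near a given $\phi$. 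Real-analyticity of the $\Lambda_{F,s}$ on $\mathcal{A}_{F,\Omega}$, and the formula for their differential at a point $\tilde\phi\in\Theta_{F,\Omega}$, then follow from the abstract theorems on elementary symmetric functions of eigenvalues of analytic families of operators in \cite{lala2004} (see also \cite{buoso16}): writing $\hat v_l:=v_l\circ\tilde\phi$ for the pull-backs of the $L^2(\tilde\Omega)$-orthonormal eigenfunctions, one has
\begin{equation*}
d|_{\phi=\tilde\phi}(\Lambda_{F,s})[\psi]=\lambda_F^{s-1}\binom{|F|-1}{s-1}\sum_{l\in F}\Bigl(\,dP_{\tilde\phi}[\psi]-\lambda_F\,dJ_{\tilde\phi}[\psi]\Bigr)[\hat v_l][\hat v_l],
\end{equation*}
exactly as in the simple-eigenvalue case, the key point being that the contribution of the derivative of the eigenfunction drops out by self-adjointness. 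Thus everything reduces to identifying $\bigl(dP_{\tilde\phi}[\psi]-\lambda_F\,dJ_{\tilde\phi}[\psi]\bigr)[\hat v_l][\hat v_l]$ with $\int_{\partial\tilde\Omega}G(v_l)\,\zeta\cdot\nu\,d\mathcal{H}^{d-1}$, where $\zeta=\psi\circ\tilde\phi^{-1}$.

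\emph{Step 2 (shape-derivative interpretation).} The quantity to be computed equals $\frac{d}{dt}\big|_{0}\bigl(\mathcal{Q}_{\Omega_t}(w_t,w_t)-\lambda_F\int_{\Omega_t}w_t^2\bigr)$, where $\Omega_t=(\tilde\phi+t\psi)(\Omega)$ and $w_t$ is the transport of $v_l$ along the flow with velocity field $\zeta$ on $\tilde\Omega$ (so its material derivative vanishes and its shape derivative is $w'=-\zeta\cdot\nabla v_l$). Here the hypothesis $\tilde\Omega\in C^4$ enters decisively: by elliptic regularity (\cite[Theorem~2.20]{ggs}) each $v_l\in H^4(\tilde\Omega)$, which makes all the boundary traces below, as well as the Green identity used in Step~3, legitimate, and in particular gives $w'\in H^2(\tilde\Omega)$. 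Splitting $\mathcal{Q}_{\Omega_t}$ into its interior part and the $\beta$- and $\gamma$-boundary integrals, the Reynolds transport theorem produces, from the interior part, the ``frozen-integrand'' boundary term $\int_{\partial\tilde\Omega}\bigl((1-\sigma)|D^2v_l|^2+\sigma(\Delta v_l)^2+\alpha|\nabla v_l|^2\bigr)\zeta\cdot\nu$ --- the first line of $G$ --- together with the linearised contribution $2\int_{\tilde\Omega}\bigl((1-\sigma)D^2v_l:D^2w'+\sigma\Delta v_l\Delta w'+\alpha\nabla v_l\cdot\nabla w'\bigr)$.

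\emph{Step 3 (reduction to a boundary integral via the Green identity and the boundary conditions).} Integrating the linearised interior contribution by parts via the Green identity $\mathcal{Q}_{\tilde\Omega}(v_l,w')=\int_{\tilde\Omega}(\Delta^2v_l-\alpha\Delta v_l)w'+\int_{\partial\tilde\Omega}\bigl(\Beta(v_l)+\beta\partial_\nu v_l\bigr)\partial_\nu w'+\int_{\partial\tilde\Omega}\bigl(\Gamma(v_l)+\gamma v_l\bigr)w'$ and invoking $\Delta^2v_l-\alpha\Delta v_l=\lambda_F v_l$ together with the eigenfunction boundary conditions $\Beta(v_l)=-\beta\partial_\nu v_l$ and $\Gamma(v_l)=-\gamma v_l$, this contribution collapses to $2\lambda_F\int_{\tilde\Omega}v_l w'-2\beta\int_{\partial\tilde\Omega}\partial_\nu v_l\,\partial_\nu w'-2\gamma\int_{\partial\tilde\Omega}v_l w'$; its interior piece combines with $-\lambda_F\frac{d}{dt}\big|_0\int_{\Omega_t}w_t^2=-\lambda_F\int_{\tilde\Omega}v_l^2\,\divergence\zeta$ and the divergence theorem into a boundary contribution. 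For the $\gamma$-boundary integral, the Hadamard formula for boundary functionals gives $\frac{d}{dt}\big|_0\gamma\int_{\partial\Omega_t}w_t^2=\gamma\int_{\partial\tilde\Omega}\bigl(2v_l w'+(\partial_\nu(v_l^2)+Kv_l^2)\zeta\cdot\nu\bigr)$, whose $w'$-term cancels the $-2\gamma\int_{\partial\tilde\Omega}v_l w'$ above and leaves the $\gamma$-terms $\gamma K v_l^2+2\gamma v_l\partial_\nu v_l$ of $G$. For the $\beta$-boundary integral one must additionally differentiate the unit normal $\nu_t$, whose shape derivative is $-\nabla_{\partial\tilde\Omega}(\zeta\cdot\nu)$; the resulting $\partial_\nu w'$-term cancels the $-2\beta\int_{\partial\tilde\Omega}\partial_\nu v_l\,\partial_\nu w'$ above, and what remains is a combination of $\zeta\cdot\nu$, $K$, $\partial_\nu^2 v_l$, $\Delta v_l$, $\nabla_{\partial\tilde\Omega}\partial_\nu v_l$ and $\nabla_{\partial\tilde\Omega}v_l$ which, after a tangential integration by parts on the closed manifold $\partial\tilde\Omega$ (Gauss--Green, hence with no extra boundary terms) and use of the decomposition $\Delta v_l=\Delta_{\partial\tilde\Omega}v_l+K\partial_\nu v_l+\partial_\nu^2 v_l$, recombines into the $\beta$-terms of $G$. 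Assembling all contributions and simplifying with the boundary conditions yields \eqref{Gvl}; the foundational computations for the pure Bilaplacian part (the $1-\sigma$ and $\sigma$ terms) are already available in \cite{buoso16} and can be cited directly.

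The main obstacle is Step~3 for the $\beta$-integral. Computing $\frac{d}{dt}\big|_0\beta\int_{\partial\Omega_t}|\partial_{\nu_t}w_t|^2$ requires both the shape derivative of the unit normal and a careful choice of the neighbourhood-extension of $\partial_\nu v_l$ (for instance through the signed distance function, so that the extended normal field has vanishing normal derivative), and one must verify that every term not proportional to $\zeta\cdot\nu$ --- in particular all $w'$-dependent boundary terms --- cancels, so that the Hadamard structure of the answer emerges. The tangential integration by parts converting $\partial_\nu v_l\,\nabla_{\partial\tilde\Omega}v_l\cdot\nabla_{\partial\tilde\Omega}(\zeta\cdot\nu)$ into a multiple of $\zeta\cdot\nu$, together with the bookkeeping of the curvature terms involving $K$, is where the computation is most delicate; once these are handled, the $\alpha$-Laplacian term, the $\gamma$-boundary term, and the pure Bilaplacian part from \cite{buoso16} follow along the same, more routine, lines.
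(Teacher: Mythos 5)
Your Step 1 matches the paper's reduction: the paper, too, cites \cite{lala2004}, \cite{bula2013} and \cite[proof of Theorem~3]{buoso16} for the openness of $\mathcal{A}_{F,\Omega}$, the real-analyticity of the $\Lambda_{F,s}$, and the representation of $d|_{\phi=\tilde\phi}\Lambda_{F,s}$ in terms of $dP_{\tilde\phi}$, $dJ_{\tilde\phi}$ evaluated on the pulled-back eigenfunctions. For the actual computation of these Fr\'echet differentials, however, you take a genuinely different route. You interpret $(dP_{\tilde\phi}[\psi]-\lambda_F dJ_{\tilde\phi}[\psi])[\hat v_l][\hat v_l]$ as a shape derivative of a Rayleigh-type functional along the flow $\Omega_t=(\tilde\phi+t\psi)(\Omega)$, then use transported fields ($w'=-\zeta\cdot\nabla v_l$), the Reynolds transport theorem, the biharmonic Green identity together with the eigenfunction boundary conditions, the Hadamard formula for boundary functionals, and the shape derivative of the unit normal. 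The paper instead differentiates the pulled-back operators directly: it decomposes $P_\phi=(1-\sigma)M_\phi+\sigma B_\phi+\alpha L_\phi+\beta T_\phi+\gamma J_{3,\phi}$, cites \cite[Lemmas~7,~8]{buoso16} for all but one of these pieces, and establishes the one genuinely new ingredient---the differential of the normal-derivative trace form $T_\phi$---in Lemma~\ref{conticino} by writing out $T_\phi$ explicitly via the surface change-of-variables formula and the transformed normal $\nabla\phi^{-T}\nu_\Omega/|\nabla\phi^{-T}\nu_\Omega|$ and then differentiating term by term. Both approaches end up requiring the same tangential calculus (decomposition $\Delta=\partial_\nu^2+K\partial_\nu+\Delta_{\partial\tilde\Omega}$, tangential divergence theorem on the closed manifold $\partial\tilde\Omega$), so the bookkeeping is comparable; your variational route is perhaps more geometrically transparent, while the paper's pull-back route is more mechanical and keeps closer to the analytic-perturbation framework set up in Step~1.

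The substantive gap in your proposal is exactly where you flag it: Step~3 for the $\beta$-integral is declared ``the main obstacle'' but is not actually carried out. This is, however, the whole new technical content of the theorem. The Bilaplacian, Laplacian, $L^2$ and $\gamma$-boundary contributions are already available in \cite{buoso16}, and the only part that is not is the shape derivative of $\int_{\partial\Omega_t}|\partial_{\nu_t}w_t|^2$; the paper supplies this in Lemma~\ref{conticino}, whereas your proposal stops at ``this is delicate''. It is not evident \emph{a priori}, without performing the tangential integrations by parts and tracking the curvature terms, that all contributions not proportional to $\zeta\cdot\nu$ cancel or that the $\beta$-coefficients assemble into the four $\beta$-terms displayed in \eqref{Gvl}; a proof must exhibit this. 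One further item to reconcile when you complete the computation: as you correctly observe, the $\lambda_F\,dJ_{\tilde\phi}$ contribution combines with the Green-identity bulk term into the boundary integral $-\lambda_F\int_{\partial\tilde\Omega}v_l^2\,\zeta\cdot\nu$, yet the stated integrand $G$ in \eqref{Gvl} contains no $-\lambda_F v^2$ term; you should check carefully whether this term survives (and is missing from \eqref{Gvl}) or is cancelled by something your sketch has not yet exposed.
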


\begin{proof}
For the proof of the first part of the theorem we refer to \cite[Theorem 3.1]{bula2013} (see also \cite{lala2004}). Concerning formula \eqref{derivdsys},
we start by recalling that, for $\phi\in\mathcal{A}_{\Omega}$, we have that the pull-back of the operator $P$ is defined by
$$
P_{\phi}[u][v]=\mathcal{Q}_{\phi(\Omega)}(u\circ\phi^{-1},v\circ\phi^{-1}),
$$
for any $u,v\in H^2(\Omega)$, and similarly 
$$
J_{\phi}[u][v]=\int_\Omega uv |\det \nabla \phi|\,dx.
$$
We have
\begin{equation*}
d|_{\phi =\tilde\phi}\Lambda_{F,s}[\psi]
=-\lambda_{F}^{s-1}(\tilde \phi)   \binom{|F|-1}{s-1}
\sum_{l\in F}
\left(\lambda_F(\tilde\phi)(d|_{\phi=\tilde{\phi}}J_{\phi}[\psi])[u_l][u_l]
 -(d|_{\phi=\tilde{\phi}}P_{\phi}[\psi])[u_l][u_l]\right),
\end{equation*}
for all  $\psi\in C^2( \overline\Omega\, ;{\mathbb{R}}^d) $ (cf.\ \cite[proof of Theorem 3]{buoso16}), where $ u_l= v_l\circ \tilde \phi $ for all $l\in F$. Note also that by standard regularity theory (see e.g., \cite[Theorem~2.20]{ggs}) $v_l\in H^{4}(\tilde \phi (\Omega ))$ for all $l\in F$.

Formula \eqref{derivdsys} is then obtained using \cite[Lemmas 7, 8]{buoso16} and Lemma \ref{conticino} as follows. We first observe that, in the notation of \cite{buoso16},
$$
P_\phi=(1-\sigma)M_\varphi+\sigma B_\varphi+\alpha L_\varphi+\beta T_\varphi  +\gamma J_{3,\varphi},
$$
where $T_\varphi$ is defined in Lemma \ref{conticino}. Formula \eqref{derivdsys} now follows just by adding together all the terms in the differential and noticing that a number of summands vanish due to the boundary conditions.
\end{proof}

We note that a Hadamard-type formula like \eqref{derivdsys} has been already derived for other type of boundary conditions (Dirichlet, Neumann, etc.), we refer to \cite{buoso16} for an overview.

\begin{lemma}
\label{conticino}
Let $T$ be the operator from $H^2(\Omega)$ to its dual defined by
$$
T[u_1][u_2]=\int_{\partial\Omega}\frac{\partial u_1}{\partial\nu}\frac{\partial u_2}{\partial\nu} \,d\mathcal{H}^{d-1}(x),
$$
and, for $\phi\in \mathcal{A}_{\Omega}$, let $T_{\phi}$ be the pull-back
$$
T_{\phi}[u_1][u_2]=\int_{\partial\phi(\Omega)}\frac{\partial (u_1\circ\phi^{-1})}{\partial\nu}\frac{\partial (u_2\circ\phi^{-1})}{\partial\nu} \,d\mathcal{H}^{d-1}(x).
$$
Also let $u_1,u_2\in H^2(\Omega)$ be such that $v_1=u_1\circ\tilde{\phi}^{-1}$, $v_2=u_2\circ\tilde{\phi}^{-1}\in H^{4}(\tilde{\phi}(\Omega))$. Then
\begin{multline*}
(d|_{\phi=\tilde{\phi}}T_{\phi}[\psi])[u_1][u_2]
=\int_{\partial\tilde\Omega}\left(\frac{\partial v_1}{\partial\nu}\Delta v_2+\frac{\partial v_2}{\partial\nu}\Delta v_1\right)\zeta\cdot\nu \,d\mathcal H^{d-1}(x)\\
+\int_{\partial\tilde\Omega}\left(\nabla\frac{\partial v_1}{\partial\nu}\nabla v_2+\nabla\frac{\partial v_2}{\partial\nu}\nabla v_1\right)\zeta\cdot\nu \,d\mathcal H^{d-1}(x)
-\int_{\partial\tilde\Omega}\frac{\partial\ }{\partial\nu}\left(\frac{\partial v_1}{\partial\nu}\frac{\partial v_2}{\partial\nu}\right)\zeta\cdot\nu \,d\mathcal H^{d-1}(x)\\
-\int_{\partial\tilde\Omega}K\frac{\partial v_1}{\partial\nu}\frac{\partial v_2}{\partial\nu}\zeta\cdot\nu \,d\mathcal H^{d-1}(x)
-\int_{\partial\tilde\Omega}\left(\frac{\partial v_1}{\partial\nu}\nabla v_2+\frac{\partial v_2}{\partial\nu}\nabla v_1\right)\frac{\partial \zeta}{\partial\nu} \,d\mathcal H^{d-1}(x)\\
-\int_{\partial\tilde\Omega}\left(\frac{\partial v_1}{\partial\nu}\frac{\partial\ }{\partial\nu}\nabla v_2+\frac{\partial v_2}{\partial\nu}\frac{\partial\ }{\partial\nu}\nabla v_1\right)\zeta \,d\mathcal H^{d-1}(x),
	\end{multline*}
for all $\psi\in C^2(\overline{\Omega};\mathbb{R}^d)$, where $\tilde\Omega=\tilde\phi(\Omega)$, $\zeta=\psi\circ\tilde{\phi}^{-1}$, and $K$ is the mean curvature of $\partial\tilde{\phi}(\Omega)$, i.e., $K=\divergence \nu$. 
\end{lemma}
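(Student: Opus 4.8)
The plan is to mimic, essentially line by line, the computations of \cite[Lemmas~7 and~8]{buoso16} (see also \cite{lala2004}) that produced the shape derivatives of the pulled-back forms $M_\phi$, $B_\phi$, $L_\phi$ and $J_{3,\phi}$, adapting them to the new boundary integrand $\frac{\partial u_1}{\partial\nu}\frac{\partial u_2}{\partial\nu}$. First I would reduce to the case $\tilde\phi=\mathrm{id}$: composing with $\tilde\phi^{-1}$, the Fr\'echet differential of $\phi\mapsto T_\phi[u_1][u_2]$ at $\tilde\phi$ in the direction $\psi$ equals the differential at the identity, computed now over the domain $\tilde\Omega=\tilde\phi(\Omega)$, in the direction $\zeta=\psi\circ\tilde\phi^{-1}$, and applied to $v_i=u_i\circ\tilde\phi^{-1}\in H^4(\tilde\Omega)$. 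From here on I would work with the one-parameter family $\phi_\varepsilon=\mathrm{id}+\varepsilon\zeta$ and compute $\frac{d}{d\varepsilon}\big|_{\varepsilon=0}$.

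Next, I would rewrite $T_{\phi_\varepsilon}$ as an integral over the \emph{fixed} boundary $\partial\tilde\Omega$ by the change of variables $y=\phi_\varepsilon(x)$, using the standard transformation rules (cf.\ \cite[Section~2]{lala2004}): $\nabla(w\circ\phi^{-1})\circ\phi=(\nabla\phi)^{-T}\nabla w$, the outer unit normal transforms as $\nu_\phi\circ\phi=(\nabla\phi)^{-T}\nu\,/\,|(\nabla\phi)^{-T}\nu|$, and the surface element as $d\mathcal{H}^{d-1}_{\partial\phi(\tilde\Omega)}=|\det\nabla\phi|\,|(\nabla\phi)^{-T}\nu|\,d\mathcal{H}^{d-1}_{\partial\tilde\Omega}$; this yields, for $\phi$ near the identity,
\begin{displaymath}
	T_{\phi}[v_1][v_2]=\int_{\partial\tilde\Omega}\frac{\bigl((\nabla\phi)^{-T}\nabla v_1\cdot(\nabla\phi)^{-T}\nu\bigr)\bigl((\nabla\phi)^{-T}\nabla v_2\cdot(\nabla\phi)^{-T}\nu\bigr)}{|(\nabla\phi)^{-T}\nu|}\,|\det\nabla\phi|\,d\mathcal{H}^{d-1}(x).
\end{displaymath}
I would then differentiate under the integral sign at $\phi=\mathrm{id}$, using the elementary expansions $\frac{d}{d\varepsilon}\big|_0(\nabla\phi_\varepsilon)^{-T}=-(\nabla\zeta)^{T}$, $\frac{d}{d\varepsilon}\big|_0\det\nabla\phi_\varepsilon=\divergence\zeta$ and $\frac{d}{d\varepsilon}\big|_0|(\nabla\phi_\varepsilon)^{-T}\nu|=-(\nabla\zeta\,\nu)\cdot\nu$ (all recorded in \cite[Section~3]{buoso16}). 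This produces an explicit integral over $\partial\tilde\Omega$ whose integrand is bilinear in $(\nabla v_1,\nabla v_2)$ and linear in the pair $(\zeta,\nabla\zeta)$.

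The remaining, and most delicate, step is to bring that integrand into the intrinsic form asserted in the lemma. For this I would decompose $\zeta=(\zeta\cdot\nu)\nu+\zeta_\tau$ and split the gradients $\nabla v_i$ into their normal and tangential parts along $\partial\tilde\Omega$, and then integrate by parts on the closed hypersurface $\partial\tilde\Omega$ by means of the tangential divergence theorem together with $K=\divergence\nu$, so as to trade the terms carrying $\nabla\zeta$ for the curvature term $-K\frac{\partial v_1}{\partial\nu}\frac{\partial v_2}{\partial\nu}\,\zeta\cdot\nu$ and for the terms $-\bigl(\frac{\partial v_1}{\partial\nu}\nabla v_2+\frac{\partial v_2}{\partial\nu}\nabla v_1\bigr)\cdot\frac{\partial\zeta}{\partial\nu}$ and $-\bigl(\frac{\partial v_1}{\partial\nu}\frac{\partial}{\partial\nu}\nabla v_2+\frac{\partial v_2}{\partial\nu}\frac{\partial}{\partial\nu}\nabla v_1\bigr)\cdot\zeta$ of the statement, the surviving normal-derivative pieces then combining into $\bigl(\frac{\partial v_1}{\partial\nu}\Delta v_2+\frac{\partial v_2}{\partial\nu}\Delta v_1\bigr)\zeta\cdot\nu+\bigl(\nabla\frac{\partial v_1}{\partial\nu}\cdot\nabla v_2+\nabla\frac{\partial v_2}{\partial\nu}\cdot\nabla v_1\bigr)\zeta\cdot\nu-\frac{\partial}{\partial\nu}\bigl(\frac{\partial v_1}{\partial\nu}\frac{\partial v_2}{\partial\nu}\bigr)\zeta\cdot\nu$. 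This is precisely where the hypotheses $v_i\in H^4(\tilde\Omega)$ and the $C^4$-regularity of $\partial\tilde\Omega$ enter: they legitimise the integration by parts on $\partial\tilde\Omega$ and give meaning to $\frac{\partial}{\partial\nu}\nabla v_i=D^2v_i\,\nu$ as an $L^2(\partial\tilde\Omega)$-function.

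I expect this last rearrangement to be the main obstacle. The computation itself is elementary, but keeping track --- sign by sign --- of all the curvature contributions and of the precise coefficients of the $\frac{\partial\zeta}{\partial\nu}$- and $\zeta$-terms is error-prone. It is, however, a longer but entirely mechanical variant of the analogous computations already performed for $M_\phi$, $B_\phi$ and $L_\phi$ in \cite{buoso16}, whose tangential-calculus identities I would reuse verbatim rather than re-derive.
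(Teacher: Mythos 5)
Your approach coincides with the paper's: you reduce to the base diffeomorphism, write $T_\phi$ as an integral over the fixed boundary via the standard transformation rules for $\nabla(u\circ\phi^{-1})$, $\nu_{\phi(\Omega)}$ and the surface measure, differentiate under the integral at the identity, and then plan to tangentially decompose and integrate by parts on $\partial\tilde\Omega$ using $K=\divergence\nu$. The paper does exactly this, using \cite[Formula (4.11)]{delzol} to write the pulled-back form and arriving, after differentiating, at
\begin{displaymath}
-\Bigl(\tfrac{\partial v_1}{\partial\nu}\nabla v_2+\tfrac{\partial v_2}{\partial\nu}\nabla v_1\Bigr)\cdot(\nabla\zeta+\nabla\zeta^T)\nu
+\tfrac12\tfrac{\partial v_1}{\partial\nu}\tfrac{\partial v_2}{\partial\nu}\,\nu^T(\nabla\zeta+\nabla\zeta^T)\nu
+\tfrac{\partial v_1}{\partial\nu}\tfrac{\partial v_2}{\partial\nu}\divergence\zeta
\end{displaymath}
as the boundary integrand, followed by tangential--normal splittings of $\zeta$, $\nabla v_i$, the tangential divergence theorem, and the identities $\divergence\zeta=\divergence_{\partial\tilde\Omega}\zeta+\tfrac{\partial\zeta}{\partial\nu}\cdot\nu$ and $\Delta|_{\partial\tilde\Omega}=\tfrac{\partial^2}{\partial\nu^2}+K\tfrac{\partial}{\partial\nu}+\Delta_{\partial\tilde\Omega}$.

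The genuine gap is that you stop exactly at the point where the lemma is actually proved. Everything before the ``remaining, and most delicate, step'' is routine and matches the paper; but the asserted formula consists of six specific terms with specific signs and coefficients, and those are produced only by the rearrangement you defer. You flag it yourself as the ``main obstacle'' and ``error-prone'', which is accurate, but a proof of a formula that declines to verify the formula is not a proof --- it is a plausible plan of attack. To close the gap you would need to carry out the two rounds of surface integration by parts explicitly and confirm that the $\frac{\partial\zeta}{\partial\nu}$-terms, the $K$-term and the $\frac{\partial}{\partial\nu}\bigl(\frac{\partial v_1}{\partial\nu}\frac{\partial v_2}{\partial\nu}\bigr)$-term emerge with exactly the stated coefficients, as the paper does.
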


\begin{proof}
First of all, we recall that the normals of $\Omega$ and of $\phi(\Omega)$ are related by the formula
$$
\nu_{\phi(\Omega)}\circ\phi=\frac{\nabla\phi^{-T}\nu_{\Omega}}{\left|\nabla\phi^{-T}\nu_{\Omega}\right|},
$$
where $\nabla\phi^{-T}=(\nabla\phi^{-1})^T$. Using this with \cite[Formula (4.11)]{delzol} we may rewrite $T_{\phi}$ as
\begin{multline*}
T_{\phi}[u_1][u_2]=\int_{\partial\Omega} \left[\nabla u_1 (\nabla \phi)^{-1}\frac{(\nabla\phi)^{-T}\nu_{\Omega}}{\left|(\nabla\phi)^{-T}\nu_{\Omega}\right|}\right]\times \\
\times \left[\nabla u_2 (\nabla \phi)^{-1}\frac{(\nabla\phi)^{-T}\nu_{\Omega}}{\left|(\nabla\phi)^{-T}\nu_{\Omega}\right|}\right]\left|(\nabla\phi)^{-T}\nu_{\Omega}\right||\det \nabla\phi|\,d\mathcal H^{d-1}(x),
\end{multline*}
and therefore
\begin{multline*}
(d|_{\phi=\tilde{\phi}}T_{\phi}[\psi])[u_1][u_2]
=-\int_{\partial\tilde\Omega}\left(\frac{\partial v_1}{\partial\nu}\nabla v_2+\frac{\partial v_2}{\partial\nu}\nabla v_1\right)\cdot\left(\nabla\zeta+\nabla\zeta^T\right)\nu \,d\mathcal H^{d-1}(x)\\
+\frac12\int_{\partial\tilde\Omega}\frac{\partial v_1}{\partial\nu}\frac{\partial v_2}{\partial\nu}\nu^T\left(\nabla\zeta+\nabla\zeta^T\right)\nu \,d\mathcal H^{d-1}(x)
+\int_{\partial\tilde\Omega}\frac{\partial v_1}{\partial\nu}\frac{\partial v_2}{\partial\nu}\divergence \zeta \,d\mathcal H^{d-1}(x).
\end{multline*}

Now, keeping in mind that $\nu$ denotes the normal unit vector to $\tilde\Omega$ and $F_{\partial\tilde\Omega}=F-(F\cdot\nu)\nu$ is the tangential part of the vector $F$, we see that
\begin{multline*}
-\int_{\partial\tilde\Omega}\left(\frac{\partial v_1}{\partial\nu}\nabla v_2+\frac{\partial v_2}{\partial\nu}\nabla v_1\right)\cdot\left(\nabla\zeta+\nabla\zeta^T\right)\nu \,d\mathcal H^{d-1}(x)\\
=-4\int_{\partial\tilde\Omega}\frac{\partial v_1}{\partial\nu}\frac{\partial v_2}{\partial\nu}\frac{\partial \zeta}{\partial\nu}\nu \,d\mathcal H^{d-1}(x)\\
-\int_{\partial\tilde\Omega}\left(\frac{\partial v_1}{\partial\nu}\nabla_{\partial\tilde\Omega} v_2+\frac{\partial v_2}{\partial\nu}\nabla_{\partial\tilde\Omega} v_1\right)\cdot\left(\left.\frac{\partial\zeta}{\partial\nu}\right|_{\partial\tilde\Omega}+\nabla_{\partial\tilde\Omega}\zeta^T\cdot \nu\right) \,d\mathcal H^{d-1}(x)\\
=-4\int_{\partial\tilde\Omega}\frac{\partial v_1}{\partial\nu}\frac{\partial v_2}{\partial\nu}\frac{\partial \zeta}{\partial\nu}\nu \,d\mathcal H^{d-1}(x)
-\int_{\partial\tilde\Omega}\left(\frac{\partial v_1}{\partial\nu}\nabla_{\partial\tilde\Omega} v_2+\frac{\partial v_2}{\partial\nu}\nabla_{\partial\tilde\Omega} v_1\right)\cdot\left.\frac{\partial\zeta}{\partial\nu}\right|_{\partial\tilde\Omega} \,d\mathcal H^{d-1}(x)\\
+\int_{\partial\tilde\Omega}{\rm div}_{\partial\tilde\Omega}\left(\frac{\partial v_1}{\partial\nu}\nabla_{\partial\tilde\Omega} v_2+\frac{\partial v_2}{\partial\nu}\nabla_{\partial\tilde\Omega} v_1\right)\zeta\cdot\nu \,d\mathcal H^{d-1}(x)\\
+\int_{\partial\tilde\Omega}\left(\frac{\partial v_1}{\partial\nu}\nabla_{\partial\tilde\Omega} v_2+\frac{\partial v_2}{\partial\nu}\nabla_{\partial\tilde\Omega} v_1\right)\nabla_{\partial\tilde\Omega}\nu\cdot\zeta_{\partial\tilde\Omega} \,d\mathcal H^{d-1}(x).
\end{multline*}

We also recall that ${\rm div}\, F={\rm div}_{\partial\tilde\Omega} F+\frac{\partial F}{\partial\nu}\cdot\nu$, so
\begin{multline*}
\int_{\partial\tilde\Omega}\frac{\partial v_1}{\partial\nu}\frac{\partial v_2}{\partial\nu}\divergence \zeta \,d\mathcal H^{d-1}(x)
=\int_{\partial\tilde\Omega}\frac{\partial v_1}{\partial\nu}\frac{\partial v_2}{\partial\nu}\frac{\partial \zeta}{\partial\nu}\cdot\nu\,d\mathcal H^{d-1}(x)\\
+\int_{\partial\tilde\Omega}K\frac{\partial v_1}{\partial\nu}\frac{\partial v_2}{\partial\nu} \zeta\cdot\nu \,d\mathcal H^{d-1}(x)
-\int_{\partial\tilde\Omega}\nabla_{\partial\tilde\Omega}\left(\frac{\partial v_1}{\partial\nu}\frac{\partial v_2}{\partial\nu}\right) \cdot\zeta_{\partial\tilde\Omega}\cdot\nu \,d\mathcal H^{d-1}(x).
\end{multline*}

Collecting these facts we obtain
\begin{multline*}
(d|_{\phi=\tilde{\phi}}T_{\phi}[\psi])[u_1][u_2]
=-2\int_{\partial\tilde\Omega}\frac{\partial v_1}{\partial\nu}\frac{\partial v_2}{\partial\nu}\frac{\partial \zeta}{\partial\nu}\nu \,d\mathcal H^{d-1}(x)\\
-\int_{\partial\tilde\Omega}\left(\frac{\partial v_1}{\partial\nu}\nabla_{\partial\tilde\Omega}v_2+\frac{\partial v_2}{\partial\nu}\nabla_{\partial\tilde\Omega}v_1\right)\left(\left.\frac{\partial\zeta}{\partial\nu}\right|_{\partial\tilde\Omega}-\nabla_{\partial\tilde\Omega}\nu\cdot\zeta_{\partial\tilde\Omega}\right)\,d\mathcal H^{d-1}(x)\\
+\int_{\partial\tilde\Omega}\divergence_{\partial\tilde\Omega}\left(\frac{\partial v_1}{\partial\nu}\nabla_{\partial\tilde\Omega}v_2+\frac{\partial v_2}{\partial\nu}\nabla_{\partial\tilde\Omega}v_1\right) \zeta\cdot\nu \,d \mathcal H^{d-1}(x)
+\int_{\partial\tilde\Omega} K \frac{\partial v_1}{\partial\nu}\frac{\partial v_2}{\partial\nu} \zeta\cdot\nu  \,d\mathcal H^{d-1}(x)\\
-\int_{\partial\tilde\Omega}\nabla_{\partial\tilde\Omega}\left(\frac{\partial v_1}{\partial\nu}\frac{\partial v_2}{\partial\nu}\right)\cdot\zeta_{\partial\tilde\Omega} \,d\mathcal H^{d-1}(x).
\end{multline*}

Now we observe that

\begin{multline*}
-\int_{\partial\tilde\Omega}\nabla_{\partial\tilde\Omega}\left(\frac{\partial v_1}{\partial\nu}\frac{\partial v_2}{\partial\nu}\right)\cdot\zeta_{\partial\tilde\Omega} \,d\mathcal H^{d-1}(x)\\
=-\int_{\partial\tilde\Omega}\nabla\left(\frac{\partial v_1}{\partial\nu}\frac{\partial v_2}{\partial\nu}\right)\cdot\zeta \,d\mathcal H^{d-1}(x)
+\int_{\partial\tilde\Omega} \frac{\partial \ }{\partial\nu}\left(\frac{\partial v_1}{\partial\nu}\frac{\partial v_2}{\partial\nu}\right)\zeta\cdot\nu \,d\mathcal H^{d-1}(x)\\
=-\int_{\partial\tilde\Omega}\left(\frac{\partial v_1}{\partial\nu}\nabla\frac{\partial v_2}{\partial\nu}+\frac{\partial v_2}{\partial\nu}\nabla\frac{\partial v_1}{\partial\nu}\right)\cdot\zeta \,d\mathcal H^{d-1}(x)
+\int_{\partial\tilde\Omega} \frac{\partial \ }{\partial\nu}\left(\frac{\partial v_1}{\partial\nu}\frac{\partial v_2}{\partial\nu}\right)\zeta\cdot\nu \,d\mathcal H^{d-1}(x)\\
=-\int_{\partial\tilde\Omega}\left(\frac{\partial v_1}{\partial\nu}\nabla_{\partial\tilde\Omega} v_2+\frac{\partial v_2}{\partial\nu}\nabla_{\partial\tilde\Omega} v_1\right)\cdot \nabla_{\partial\tilde\Omega}\nu\cdot\zeta \,d\mathcal H^{d-1}(x)\\
-\int_{\partial\tilde\Omega}\left(\frac{\partial v_1}{\partial\nu}\frac{\partial \ }{\partial\nu}\nabla v_2+\frac{\partial v_2}{\partial\nu}\frac{\partial \ }{\partial\nu}\nabla v_1\right)\cdot\zeta \,d\mathcal H^{d-1}(x)
+\int_{\partial\tilde\Omega} \frac{\partial \ }{\partial\nu}\left(\frac{\partial v_1}{\partial\nu}\frac{\partial v_2}{\partial\nu}\right)\zeta\cdot\nu \,d\mathcal H^{d-1}(x),
\end{multline*}
so that, after rearranging the terms, we have
\begin{multline*}
(d|_{\phi=\tilde{\phi}}T_{\phi}[\psi])[u_1][u_2]
=\int_{\partial\tilde\Omega}\frac{\partial\ }{\partial\nu}\left(\frac{\partial v_1}{\partial\nu}\frac{\partial v_2}{\partial\nu}\right)\zeta\cdot\nu \,d\mathcal H^{d-1}(x)\\
+\int_{\partial\tilde\Omega}K\frac{\partial v_1}{\partial\nu}\frac{\partial v_2}{\partial\nu}\zeta\cdot\nu \,d\mathcal H^{d-1}(x)
+\int_{\partial\tilde\Omega}\divergence_{\partial\tilde\Omega}\left(\frac{\partial v_1}{\partial\nu}\nabla_{\partial\tilde\Omega} v_2+\frac{\partial v_2}{\partial\nu}\nabla_{\partial\tilde\Omega} v_1\right)\zeta\cdot\nu \,d\mathcal H^{d-1}(x)\\
-\int_{\partial\tilde\Omega}\left(\frac{\partial v_1}{\partial\nu}\nabla v_2+\frac{\partial v_2}{\partial\nu}\nabla v_1\right)\frac{\partial \zeta}{\partial\nu} \,d\mathcal H^{d-1}(x)\\
-\int_{\partial\tilde\Omega}\left(\frac{\partial v_1}{\partial\nu}\frac{\partial\ }{\partial\nu}\nabla v_2+\frac{\partial v_2}{\partial\nu}\frac{\partial\ }{\partial\nu}\nabla v_1\right)\zeta \,d\mathcal H^{d-1}(x).
\end{multline*}

The conclusion now comes developing the tangential divergence and using the identity $\left.\Delta\right|_{\partial\tilde\Omega}=\frac{\partial^2\ }{\partial\nu^2}+K \frac{\partial\ }{\partial\nu}+\Delta_{\partial\tilde\Omega}$.

\end{proof}

	
\medskip
\textbf{Applications to shape optimisation and criticality.} As we now have the formula for the shape derivatives of the (elementary symmetric functions of the) eigenvalues, we may then attack the problem of shape optimisation using the Lagrange Multiplier Theorem. In particular, we have the following theorem as an immediate consequence, providing a characterisation of all critical domain transformations $\phi$ (see also \cite{buoso16}).


\begin{theorem}\label{moltiplicatorisys}
Let $\Omega $ be a Lipschitz bounded open set in ${\mathbb{R}}^d$,  and $F$ be a finite subset of ${\mathbb{N}}$.
Assume that $\tilde \phi\in \Theta_{F, \Omega }$ is such that $\tilde \phi (\Omega )$ is of class $C^{4}$ and that the eigenvalues $\lambda_j(\tilde \phi )$ have the common value $\lambda_F(\tilde \phi )$ for all $j\in F$.  Let $\{  v_l\}_{l\in F}$ be an orthornormal basis in $H^2(\tilde \phi (\Omega ))$ of the eigenspace corresponding to $\lambda_F(\tilde \phi)$ (the orthonormality being taken with respect to the $L^2$-inner product). Then $\tilde \phi$
is a critical point  for any of the functions $\Lambda_{F,s}$, $s=1,\dots , |F|$,  with  volume constraint
if and only if  there exists $c\in {\mathbb{R}}$ such that
	\begin{equation}
	\label{lacondizionedsys}
	\sum_{l=1}^{|F|}G(v_l)=c\ {\rm\ on\ }\partial\tilde\phi(\Omega),
	\end{equation}
where $G$ is given by \eqref{Gvl}.
Similarly, $\tilde \phi$
is a critical point  for any of the functions $\Lambda_{F,s}$, $s=1,\dots , |F|$,  with  perimeter constraint
if and only if  there exists $c\in {\mathbb{R}}$ such that
	\begin{equation}
	\label{lacondizionedsysp}
	\sum_{l=1}^{|F|}G(v_l)=cK\ {\rm\ on\ }\partial\tilde\phi(\Omega),
	\end{equation}
where $K$ is the mean curvature of $\partial\tilde\phi(\Omega)$.
\end{theorem}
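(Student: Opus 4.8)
The proof is a direct application of the Lagrange Multiplier Theorem, following the pattern of \cite{buoso16,lala2007,lala2004}. The first ingredient I would assemble is the pair of classical shape derivatives of the two constraint functionals. Writing $V(\phi):=|\phi(\Omega)|$ and $P(\phi):=\mathcal H^{d-1}(\partial\phi(\Omega))$, both real-analytic on $\mathcal A_\Omega$, one has at $\phi=\tilde\phi$ (with $\tilde\Omega=\tilde\phi(\Omega)\in C^4$ and $\zeta=\psi\circ\tilde\phi^{-1}$)
\begin{equation*}
	d|_{\phi=\tilde\phi}V[\psi]=\int_{\partial\tilde\Omega}\zeta\cdot\nu\,d\mathcal H^{d-1}(x),\qquad
	d|_{\phi=\tilde\phi}P[\psi]=\int_{\partial\tilde\Omega}K\,\zeta\cdot\nu\,d\mathcal H^{d-1}(x),
\end{equation*}
the second identity being a consequence of the tangential divergence theorem $\int_{\partial\tilde\Omega}\divergence_{\partial\tilde\Omega}\zeta\,d\mathcal H^{d-1}=\int_{\partial\tilde\Omega}K\,(\zeta\cdot\nu)\,d\mathcal H^{d-1}$ on the closed hypersurface $\partial\tilde\Omega$ (see e.g.\ \cite{delzol,lala2004}). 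Since $|\partial\tilde\Omega|>0$, these differentials are nonzero, so the level sets $\{V=\text{const}\}$ and $\{P=\text{const}\}$ are, near $\tilde\phi$, genuine codimension-one submanifolds of $C^2(\overline\Omega;\R^d)$, and the multiplier theorem applies.

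Next I would invoke that theorem. Since $\Lambda_{F,s}$ is real-analytic on the open set $\mathcal A_{F,\Omega}$ (first part of Theorem~\ref{duesettesys}), $\tilde\phi$ is a critical point of $\Lambda_{F,s}$ under the volume constraint if and only if there is $c_1\in\R$ with $d|_{\phi=\tilde\phi}\Lambda_{F,s}[\psi]=c_1\,d|_{\phi=\tilde\phi}V[\psi]$ for every $\psi\in C^2(\overline\Omega;\R^d)$ --- this being the elementary fact that a linear functional vanishing on the kernel of another is a scalar multiple of it --- and analogously for the perimeter constraint with $V$ replaced by $P$. Substituting the Hadamard-type formula \eqref{derivdsys}, the volume-constrained criticality condition reads
\begin{equation*}
	\lambda_F^{s-1}(\tilde\phi)\binom{|F|-1}{s-1}\int_{\partial\tilde\Omega}\Big(\textstyle\sum_{l=1}^{|F|}G(v_l)\Big)\,\zeta\cdot\nu\,d\mathcal H^{d-1}(x)
	=c_1\int_{\partial\tilde\Omega}\zeta\cdot\nu\,d\mathcal H^{d-1}(x)\quad\text{for all }\psi,
\end{equation*}
and the perimeter-constrained one the same with $\zeta\cdot\nu$ on the right-hand side replaced by $K\,\zeta\cdot\nu$.

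It then remains to convert these integral identities into the pointwise relations \eqref{lacondizionedsys} and \eqref{lacondizionedsysp}. Since $\tilde\phi^{-1}\in\mathcal A_{\tilde\Omega}$ and $\partial\tilde\Omega\in C^4$ (so $\nu\in C^3$), the map $\psi\mapsto(\zeta\cdot\nu)|_{\partial\tilde\Omega}$ has image dense in $L^2(\partial\tilde\Omega)$: any $h\in C^2(\partial\tilde\Omega)$ is attained by extending $h\nu$ to a $C^2$ vector field on $\overline{\tilde\Omega}$ and composing with $\tilde\phi$. The prefactor $\lambda_F^{s-1}(\tilde\phi)\binom{|F|-1}{s-1}$ is a constant independent of $\psi$ --- equal to $1$ when $s=1$, and nonzero for $s\geq2$ provided $\lambda_F(\tilde\phi)\neq0$ (the degenerate case $\lambda_F(\tilde\phi)=0$ with $s\geq2$, where $d\Lambda_{F,s}\equiv0$, being treated separately). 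Dividing through and applying the fundamental lemma of the calculus of variations then shows that the volume-constrained identity holds for all $\psi$ precisely when $\sum_{l}G(v_l)\equiv c$ a.e.\ on $\partial\tilde\Omega$, with $c:=c_1\big/\big(\lambda_F^{s-1}(\tilde\phi)\binom{|F|-1}{s-1}\big)$, which is \eqref{lacondizionedsys}; the perimeter case likewise yields $\sum_l G(v_l)=cK$, which is \eqref{lacondizionedsysp}. Here one uses that $\sum_l G(v_l)$ is independent of the chosen orthonormal basis of the eigenspace (implicit in Theorem~\ref{duesettesys}), and that for smoother $\partial\tilde\Omega$ the identities hold pointwise by continuity of the eigenfunctions up to the boundary. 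The converse implications are immediate upon substituting \eqref{lacondizionedsys} or \eqref{lacondizionedsysp} back into \eqref{derivdsys}.

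Given the shape-derivative formula \eqref{derivdsys}, which already displays $d\Lambda_{F,s}$ as a boundary integral of a fixed density against the normal displacement $\zeta\cdot\nu$, the argument is essentially formal. The only points needing genuine care are the derivation of the perimeter shape derivative and the verification that $dV$ and $dP$ do not vanish at $\tilde\phi$ (so that the constraint sets are bona fide submanifolds and the multiplier theorem is applicable), together with the density step used to pass from the integral to the pointwise form of the Euler--Lagrange condition; none of these is a serious obstacle.
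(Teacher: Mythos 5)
Your proof takes essentially the same approach as the paper: recall the shape derivatives of the volume and perimeter functionals, combine with the Hadamard formula \eqref{derivdsys}, and apply the Lagrange Multiplier Theorem. The paper states this in three sentences (citing \cite{lambertisteklov} for the constraint derivatives and simply asserting "the result follows"), whereas you spell out the missing details—nondegeneracy of $dV$ and $dP$, the density argument passing from the integral identity to the pointwise condition, and the caveat about the prefactor $\lambda_F^{s-1}\binom{|F|-1}{s-1}$ possibly vanishing—all of which are correct and genuinely needed to make the argument rigorous.
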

\begin{proof}We recall that, if we set $\mathcal{V}(\phi)=|\phi(\Omega)|$ and $\mathcal{P}(\phi)=|\partial\phi(\Omega)|$, then we have
\begin{equation*}
d|_{\phi=\tilde{\phi}}\mathcal V (\phi)[\psi]=\int_{\partial\tilde\phi(\Omega)}(\psi\circ\tilde{\phi}^{-1})\cdot\nu \,d\mathcal H^{d-1}(x),
\end{equation*}
and
\begin{equation*}
d|_{\phi=\tilde{\phi}}\mathcal P (\phi)[\psi]=\int_{\partial\tilde\phi(\Omega)}K(\psi\circ\tilde{\phi}^{-1})\cdot\nu \,d\mathcal H^{d-1}(x),
\end{equation*}
see e.g., \cite{lambertisteklov}. The result follows combining the Lagrange Multipliers Theorem with formulae \eqref{derivdsys}, \eqref{lacondizionedsys}, and \eqref{lacondizionedsysp}.
\end{proof}

As mentioned, and as is generally well known, balls usually play a central role in shape optimisation problems for the eigenvalues of the Laplacian, but also of biharmonic operators. For rotation invariant operators such as the Bilaplacian, it is easy to show that any eigenfunction associated with a simple eigenvalue is radial. On the other hand, when dealing with a multiple eigenvalue, we cannot consider the eigenfunctions alone, but we have to consider the whole eigenspace. In particular, when this observation is coupled with condition \eqref{lacondizionedsys}, balls turn out to enjoy a nice criticality property.

	\begin{theorem}
	\label{lepallesys}
	Let $\Omega$ be a bounded Lipschitz domain in $\mathbb{R}^d$. Let $\tilde{\phi}\in\mathcal{A}_{\Omega}$ be such that
	$\tilde{\phi}(\Omega)$ is a ball. Let $\tilde{\lambda}$ be an eigenvalue of problem (\ref{robinstrong}) in $\tilde{\phi}(\Omega)$,
	and let $F$ be the set of $j\in\mathbb{N}$ such that $\lambda_j(\tilde{\phi})=\tilde{\lambda}$.
	Then $\tilde{\phi}$ is a critical point for $\Lambda_{F,s}$ both under a volume constraint and under a perimeter constraint,
	for all $s=1,\dots,|F|$.
	\end{theorem}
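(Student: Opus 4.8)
The plan is to read off the result from the criticality characterisation of Theorem~\ref{moltiplicatorisys}. Write $B := \tilde\phi(\Omega)$ for the ball in question; after a translation (which leaves problem~\eqref{robinstrong} invariant) we may assume $B = B_\rho(0)$ is centred at the origin. Let $E \subset H^2(B)$ be the eigenspace associated with $\tilde\lambda$, of dimension $|F|$, and fix an $L^2(B)$-orthonormal basis $\{v_l\}_{l\in F}$; by elliptic regularity (smoothness of $B$ and \cite[Theorem~2.20]{ggs}) each $v_l \in H^4(B)$, so the boundary expression $G(v_l)$ in \eqref{Gvl} is well defined. On a sphere the mean curvature $K = \divergence\nu = (d-1)/\rho$ is a nonzero constant, so conditions \eqref{lacondizionedsys} and \eqref{lacondizionedsysp} are \emph{both} equivalent to the single assertion that
\begin{displaymath}
  \Phi := \sum_{l\in F} G(v_l) \quad\text{is constant on } \partial B.
\end{displaymath}
It therefore suffices to prove this constancy.

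Two facts drive the argument. First, $\Phi$ does not depend on the choice of $L^2$-orthonormal basis of $E$: inspecting \eqref{Gvl}, every summand is a quadratic expression in $v$ together with its derivatives and traces, so by polarisation $G(v) = \mathcal{G}(v,v)$ for a symmetric bilinear form $\mathcal{G}$ with values in functions on $\partial B$, and if $\{w_m\}_{m\in F}$ is another orthonormal basis, related to $\{v_l\}$ by an orthogonal matrix $A$, then
\begin{displaymath}
  \sum_{m\in F} \mathcal{G}(w_m,w_m) = \sum_{l,l'\in F} \Big(\sum_{m\in F} A_{ml}A_{ml'}\Big)\mathcal{G}(v_l,v_{l'}) = \sum_{l\in F} \mathcal{G}(v_l,v_l).
\end{displaymath}
Second, problem~\eqref{robinstrong} is invariant under the orthogonal group: the Bilaplacian, the Laplacian, the Hessian, and the boundary operators $\Beta$ and $\Gamma$ all commute with rotations (for fixed $\sigma$), and $B$ is rotation-invariant. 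Hence for every $R\in SO(d)$ the map $v\mapsto v\circ R^{-1}$ is an $L^2(B)$-isometry mapping $E$ onto itself, so $\{v_l\circ R^{-1}\}_{l\in F}$ is again an orthonormal basis of $E$; and since each ingredient of $G$ is a rotation-equivariant geometric quantity and $\nu(Rx)=R\nu(x)$ on $\partial B$, one checks termwise that $G(v\circ R^{-1})(y) = G(v)(R^{-1}y)$ for $y\in\partial B$.

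Combining the two facts, for any $R\in SO(d)$ and $y\in\partial B$,
\begin{displaymath}
  \Phi(y) = \sum_{l\in F} G(v_l\circ R^{-1})(y) = \sum_{l\in F} G(v_l)(R^{-1}y) = \Phi(R^{-1}y),
\end{displaymath}
the first equality using the basis-independence applied to $\{v_l\circ R^{-1}\}$ and the second the rotation-covariance of $G$. Since $SO(d)$ acts transitively on $\mathbb{S}^{d-1}$, and hence on $\partial B$, the function $\Phi$ is constant on $\partial B$. By Theorem~\ref{moltiplicatorisys}, $\tilde\phi$ is then a critical point of each $\Lambda_{F,s}$, $s=1,\dots,|F|$, under both the volume and the perimeter constraint. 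The only genuinely delicate point is the termwise verification that $G$ transforms covariantly under rotations — that is, that each ingredient of \eqref{Gvl} (the Frobenius norm of the Hessian, the normal derivative, the second tangential/normal derivatives, the curvature, and the mixed terms such as $\nabla\frac{\partial v}{\partial\nu}\cdot\nabla v$ and $\frac{\partial v}{\partial\nu}\frac{\partial^2 v}{\partial\nu^2}$) is a rotation-equivariant object on the sphere; this is routine but should be carried out with care. Everything else reduces to linear algebra and the transitivity of the rotation action on $\mathbb{S}^{d-1}$.
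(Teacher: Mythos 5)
Your proof is correct and follows essentially the same strategy as the paper's: reduce both criticality conditions (volume and perimeter) to the single constancy of $\sum_{l\in F}G(v_l)$ on the sphere (using that $K$ is a constant there), then exploit rotation invariance of the problem and of the ball to conclude that this sum is a radial function, hence constant on $\partial B$. Your presentation is somewhat more streamlined — you package the argument abstractly as ``basis-independence of $\sum_l \mathcal{G}(v_l,v_l)$ plus rotation-equivariance of $G$'', where the paper instead writes out the orthogonal change-of-basis relation $v_j=\sum_l A_{jl}[R]\,v_l\circ R$ explicitly and verifies radiality of each group of terms ($\sum v_j^2$, $\sum|\nabla v_j|^2$, $\sum|D^2v_j|^2$, and the mixed terms) one by one — but the underlying argument and the point you flag as delicate (the termwise rotation-equivariance of the quantities in \eqref{Gvl}) are precisely what the paper's explicit computations deliver.
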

	
	\begin{proof} Since the mean curvature $K$ is constant on the boundary of any ball, in order to prove the theorem it suffices to show that $\sum_{l=1}^{|F|}G(v_l)$ is constant on the boundary, then the claim will follow from Theorem~\ref{moltiplicatorisys}. In particular, we show now that all its components are radial.
	
		First of all, note that by standard regularity theory (see e.g., \cite{ggs}), $v_{j}\in C^{\infty}(\overline B)$ for all $j\in F$. Now, let $O_d(\mathbb{R})$ denote the group of orthogonal linear transformations in $\mathbb{R}^d$. Since the operators $P$ and $J_i$, $i=1,2,3$ are invariant under rotations, then
		$v_k\circ R$, where $R\in O_d(\mathbb R)$, is still an eigenfunction with eigenvalue $\lambda$; moreover, $\{v_j\circ R:j=1,\dots, |F|\}$ is another orthonormal basis
		for the eigenspace associate with $\lambda$. Since both $\{v_j:j=1,\dots, |F|\}$ and $\{v_j\circ R:j=1,\dots, |F|\}$
		are orthonormal bases, then there exists $A[R]\in O_d(\mathbb{R})$ with matrix $(A_{ij}[R])_{i,j=1,\dots,|F|}$ such that
		\begin{equation}
		\label{eq}
		v_j=\sum_{l=1}^{|F|}A_{jl}[R]v_l\circ R.
		\end{equation}
		This implies that
		\begin{equation*}
		\sum_{j=1}^{|F|}v_j^2=\sum_{j=1}^{|F|}(v_j\circ R)^2,
		\end{equation*}
		from which we get that $\sum_{j=1}^{|F|}v_j^2$ is radial. In particular, one can also show that
			$$
		\sum_{j=1}^{|F|}v_j^2,\quad \sum_{j=1}^{|F|}|\nabla v_j|^2,\quad \sum_{j=1}^{|F|}|\Delta v_j|^2,\quad \sum_{j=1}^{|F|}|D^2v_j|^2
		$$ 
	are radial functions, following the lines of \cite[Theorem 5]{buoso16}.
	
	From \eqref{eq} we also obtain
	$$
	\frac{\partial v_j}{\partial r}=\sum_{l=1}^{|F|}A_{jl}[R]\frac{\partial v_l}{\partial r}\circ R
	$$
and
$$
\Delta v_j=\sum_{l=1}^{|F|}A_{jl}[R](\Delta v_l)\circ R.
$$
Combining all these formulae tells us that 
$$
\sum_{j=1}^{|F|}\frac{\partial v_j}{\partial r}\Delta v_j,\quad
  \sum_{j=1}^{|F|}\nabla \frac{\partial v_j}{\partial r}\nabla v_j,\quad 
			\sum_{j=1}^{|F|}\left(\frac{\partial v_j}{\partial r}\right)^2,\quad
			\sum_{j=1}^{|F|}\frac{\partial v_j}{\partial r}\frac{\partial^2 v_j}{\partial r^2},\quad 
			\sum_{j=1}^{|F|} v_j\frac{\partial v_j}{\partial r}
$$
are radial functions. The fact that the radial derivative coincides with the normal derivative at the boundary concludes the proof.
\end{proof}

In general, balls can be expected to be the extremisers only when the first eigenvalue is involved (see e.g., \cite{henrot}), and even in this case one can find counterexamples in certain situations \cite{fgw,frekrej15,kuttler72}. Moreover, in this case it is not clear whether the conjecture that the ball is the extremiser for all possible choices of the parameters would make sense. On the contrary, it is possible that the Robin Bilaplacian \eqref{robin1strong} shows a behaviour similar to that of the Robin Laplacian:  while the Bossel--Daners inequality asserts that the ball is the minimiser of $\mu_1^R(\gamma)$ under a volume constraint for all $\gamma > 0$ \cite{bossel,bd10}, Bareket's conjecture that the ball is a maximiser under a volume constraint for all $\gamma < 0$ \cite{bareket} is now known to be false; it is true for sufficiently small negative $\gamma$, but for large negative $\gamma$ annuli are known to have larger eigenvalues, and are conjectured to be maximisers in this case (see \cite{afk17,frekrej15}). While Theorem~\ref{lepallesys} shows that the ball is always a critical point for the first eigenvalue of the Robin Bilaplacian, it is not clear whether it is a local extremiser since this would require the computation of the second order shape derivative, which does not appear to be an easy task.

\begin{problem}
\label{prob:min-exist}
Show that the problem of minimising $\lambda_k (\Omega,\sigma,\alpha,\beta,\gamma)$ among all bounded Lipschitz domains $\Omega \subset \R^d$ of fixed volume or fixed perimeter is well posed, if the parameters $\sigma \in (-\frac{1}{d-1},1)$, $\alpha \geq 0$, $\beta \geq 0$ and $\gamma \geq 0$ are all fixed (and $(\beta,\gamma) \neq (0,0)$).
\end{problem}

\begin{problem}
Under the assumptions of Open Problem~\ref{prob:min-exist}, prove that the ball minimises $\lambda_1 (\Omega,\sigma,\alpha,\beta,\gamma)$ among all bounded Lipschitz domains $\Omega \subset \R^d$ of fixed volume or fixed perimeter.
\end{problem}

Due to the presence of multiple parameters for the Bilaplacian, unlike in the case of the Robin Laplacian the question of when to seek a minimiser and when to seek a maximiser becomes more complicated, if not all parameters have the same sign.

\begin{problem}
Study when the problem admits a minimiser, and when it admits a maximiser, in dependence on $\alpha,\beta,\gamma \in \R$.
\end{problem}

\begin{problem}
Establish a counterpart to the counterexample to Bareket's conjecture. That is, prove that for $\beta < 0$ and/or $\gamma < 0$ sufficiently large negative, the ball does not maximise $\lambda_1 (\Omega,\sigma,\alpha,\beta,\gamma)$. It should be sufficient to compare the ball with suitable annuli (cf.\ \cite{frekrej15} and also Open Problem~\ref{prob:simple-study}).
\end{problem}

Under the assumption that Open Problem~\ref{prob:min-exist} can be proved, the related question of minimising or maximising the higher eigenvalues becomes interesting. In general, as mentioned, balls cannot be expected to always be the extremisers, even if Theorem~\ref{lepallesys} shows that balls are critical domains for all the eigenvalues. In this regard, the case $\sigma=0, \alpha>0, \beta=0$ has been recently covered in \cite{chaslang} where the authors prove that, for small negative values of $\gamma$ the ball is the only maximiser under a volume constraint.

\begin{problem}
Study the minimisation/maximisation problem for $\lambda_k (\Omega,\sigma,\alpha,\beta,\gamma)$, $k\in\mathbb N$ among all (bounded Lipschitz) domains $\Omega \subset \R^d$ of fixed volume or fixed perimeter.
\end{problem}


\section*{Acknowledgements}

The authors wish to thank two anonymous referees for their detailed and thoughtful comments on an earlier, submitted version of this paper, which greatly helped to improve it. A significant part of the research in this paper was carried out while the first author held a post-doctoral position at the \'Ecole Polytechnique F\'ed\'erale de Lausanne. 
This work was partially supported by the Funda\c c\~{a}o para a Ci\^{e}ncia e a Tecnologia
(Portugal) through the program ``Investigador FCT'' with reference IF/00177/2013 (D.B.) and IF/01461/2015 (J.B.K.) and the project {\it Extremal spectral quantities and related problems}, reference PTDC/MAT-CAL/4334/2014 (both authors), as well as by SNSF project ``Bounds for the Neumann and Steklov eigenvalues of the biharmonic operator'', SNSF grant number 200021\_178735 (D.B.).
The first author wishes to express his gratitude to the University of Lisbon for its hospitality that helped the development of this paper. 
The first author is a member of the Gruppo Nazionale per l'Analisi
Matematica, la Probabilit\`a e le loro Applicazioni (GNAMPA) of the Istituto Naziona\-le di Alta Matematica (INdAM).

\bibliographystyle{plain}
\bibliography{references_buoso_kennedy}

\begin{thebibliography}{10}

\bibitem{abf19}
P.~R.~S. Antunes, D.~Buoso, and P.~Freitas.
\newblock On the behavior of clamped plates under large compression.
\newblock {\em SIAM J.\ Appl.\ Math.}, 79(5):1872--1891, 2019.

\bibitem{afk17}
Pedro R.~S. Antunes, Pedro Freitas, and David Krej\v{c}i\v{r}\'{\i}k.
\newblock Bounds and extremal domains for {R}obin eigenvalues with negative
  boundary parameter.
\newblock {\em Adv. Calc. Var.}, 10(4):357--379, 2017.

\bibitem{ate11}
W.~Arendt and A.~F.~M. ter\ Elst.
\newblock The {D}irichlet-to-{N}eumann operator on rough domains.
\newblock {\em J.\ Differential Equations}, 67(8):2100--2124, 2011.

\bibitem{ate12}
W.~Arendt and A.~F.~M. ter\ Elst.
\newblock Sectorial forms and degenerate differential operators.
\newblock {\em J.\ Operator Theory}, 67:33--72, 2012.

\bibitem{arfela}
J.~M. Arrieta, F.~Ferraresso, and P.~D. Lamberti.
\newblock Spectral analysis of the biharmonic operator subject to {N}eumann
  boundary conditions on dumbbell domains.
\newblock {\em Integral Equations Operator Theory}, 89(3):377--408, 2017.

\bibitem{arla}
J.~M. Arrieta and P.~D. Lamberti.
\newblock Higher order elliptic operators on variable domains. stability
  results and boundary oscillations for intermediate problems.
\newblock {\em J.\ Differential Equations}, 263(7):4222--4266, 2017.

\bibitem{ash}
M.~S. Ashbaugh and R.~D. Benguria.
\newblock On {R}ayleigh's conjecture for the clamped plate and its
  generalization to three dimensions.
\newblock {\em Duke Math.\ J.}, 78(1):1--7, 1995.

\bibitem{bareket}
Miriam Bareket.
\newblock On an isoperimetric inequality for the first eigenvalue of a boundary
  value problem.
\newblock {\em SIAM J. Math. Anal.}, 8(2):280--287, 1977.

\bibitem{bin}
O.~V. Besov, V.~P. Il'in, and S.~M. Nikol'skii.
\newblock {\em Integral Representation of Functions and Embedding Theorems,
  vol.\ I (Translated from the Russian)}.
\newblock Scripta Series in Mathematics, Halsted Press, New York-Toronto,
  Ont.-London, 1978.

\bibitem{beilni2}
O.~V. Besov, V.~P. Il'in, and S.~M. Nikol'skii.
\newblock {\em Integral Representation of Functions and Embedding Theorems,
  vol.\ II (Translated from the Russian)}.
\newblock Scripta Series in Mathematics, Halsted Press, New York-Toronto,
  Ont.-London, 1979.

\bibitem{bkl19}
S.~B\"ogli, J.~B. Kennedy, and R.~Lang.
\newblock On the eigenvalues of the {R}obin {L}aplacian with a complex
  parameter, 2019.
\newblock preprint.

\bibitem{bossel}
M.-H. Bossel.
\newblock Membranes \'{e}lastiquement li\'{e}es: extension du th\'{e}or\`eme de
  {R}ayleigh-{F}aber-{K}rahn et de l'in\'{e}galit\'{e} de {C}heeger.
\newblock {\em C. R. Acad. Sci. Paris S\'{e}r. I Math.}, 302(1):47--50, 1986.

\bibitem{bradep}
Lorenzo Brasco and Guido~De Philippis.
\newblock {\em 7 Spectral inequalities in quantitative form}, pages 201 -- 281.
\newblock De Gruyter, Berlin, Boston, 2017.

\bibitem{bd10}
D.~Bucur and D.~Daners.
\newblock An alternative approach to the {F}aber-{K}rahn inequality for {R}obin
  problems.
\newblock {\em Calc. Var. Partial Differential Equations}, 37(1-2):75--86,
  2010.

\bibitem{bufega}
D.~Bucur, A.~Ferrero, and F.~Gazzola.
\newblock On the first eigenvalue of a fourth order {S}teklov problem.
\newblock {\em Calc.\ Var.\ Partial Differ.\ Equ.}, 35(1):103--131, 2009.

\bibitem{buga11}
D.~Bucur and F.~Gazzola.
\newblock The first biharmonic {S}teklov eigenvalue: positivity preserving and
  shape optimization.
\newblock {\em Milan J.\ Math.}, 79(1):247--2581, 2011.

\bibitem{bucur17}
Dorin Bucur, Pedro Freitas, and James Kennedy.
\newblock {\em 4 The Robin problem}, pages 78 -- 119.
\newblock De Gruyter, Berlin, Boston, 2017.

\bibitem{bu15}
D.~Buoso.
\newblock Shape differentiability of the eigenvalues of elliptic systems.
\newblock In Christian Constanda and Andreas Kirsch, editors, {\em Integral
  Methods in Science and Engineering}, pages 91--97, Cham, 2015. Springer
  International Publishing.

\bibitem{buchapro}
D.~Buoso, L.~M. Chasman, and L.~Provenzano.
\newblock On the stability of some isoperimetric inequalities for the
  fundamental tones of free plates.
\newblock {\em J.\ Spectr.\ Theory}, 8(3):843--869, 2018.

\bibitem{bula2013}
D.~Buoso and P.~D. Lamberti.
\newblock Eigenvalues of polyharmonic operators on variable domains.
\newblock {\em ESAIM Control Optim.\ Calc.\ Var.}, 19(4):1225--1235, 2013.

\bibitem{bupro}
D.~Buoso and L.~Provenzano.
\newblock A few shape optimization results for a biharmonic {S}teklov problem.
\newblock {\em J.\ Differential Equations}, 259(5):1778--1818, 2015.

\bibitem{buprostu}
D.~Buoso, L.~Provenzano, and J.~Stubbe.
\newblock Semiclassical bounds for spectra of biharmonic operators, 2020.
\newblock submitted.

\bibitem{buoso16}
Davide Buoso.
\newblock Analyticity and criticality results for the eigenvalues of the
  biharmonic operator.
\newblock In Filippo Gazzola, Kazuhiro Ishige, Carlo Nitsch, and Paolo Salani,
  editors, {\em Geometric Properties for Parabolic and Elliptic PDE's}, pages
  65--85, Cham, 2016. Springer International Publishing.

\bibitem{bula}
Davide Buoso and Pier~Domenico Lamberti.
\newblock {\em On a Classical Spectral Optimization Problem in Linear
  Elasticity}, pages 43--55.
\newblock Springer International Publishing, Cham, 2015.

\bibitem{buopar}
Davide Buoso and Enea Parini.
\newblock The buckling eigenvalue problem in the annulus.
\newblock {\em Communications in Contemporary Mathematics}, 23(04):2050044,
  2021.

\bibitem{burenkov}
V.~I. Burenkov.
\newblock {\em Sobolev spaces on domains}.
\newblock Teubner Texts in Mathematics, 137, B.G.\ Teubner, Stuttgart, 1998.

\bibitem{cch}
F.~Cakoni, N.~Chaulet, and H.~Haddar.
\newblock On the asymptotics of a {R}obin eigenvalue problem.
\newblock {\em C.\ R.\ Math.\ Acad.\ Sci.\ Paris}, 351(13-14):517--521, 2013.

\bibitem{chasman11}
L.~M. Chasman.
\newblock An isoperimetric inequality for fundamental tones of free plates.
\newblock {\em Comm.\ Math.\ Phys.}, 303(2):421--449, 2011.

\bibitem{chasman15}
L.~M. Chasman.
\newblock An isoperimetric inequality for fundamental tones of free plates with
  nonzero {P}oisson's ratio.
\newblock {\em Appl.\ Anal.}, 95(8):1700--1735, 2016.

\bibitem{chaslang}
L.M.\ Chasman and J.J.\ Langford.
\newblock A sharp isoperimetric inequality for the second eigenvalue of the
  robin plate, 2020.
\newblock preprint.

\bibitem{cohi}
R.~Courant and D.~Hilbert.
\newblock {\em Methods of Mathematical Physics, vol.\ 1}.
\newblock J.\ Wiley sons, New York, 1989.

\bibitem{dk10}
D.~Daners and J.~Kennedy.
\newblock On the asymptotic behaviour of the eigenvalues of a {R}obin problem.
\newblock {\em Differential Integral Equations}, 23:659--669, 2010.

\bibitem{emp14}
P.~Exner, A.~Minakov, and L.~Parnovski.
\newblock Asymptotic eigenvalue estimates for a {R}obin problem with a large
  parameter.
\newblock {\em Port.\ Math.}, 71:141--156, 2014.

\bibitem{fgw}
A.~Ferrero, F.~Gazzola, and T.~Weth.
\newblock On a fourth order {S}teklov eigenvalue problem.
\newblock {\em Analysis}, 25:315--332, 2005.

\bibitem{fela}
A.~Ferrero and P.~D. Lamberti.
\newblock Spectral stability for a class of fourth order {S}teklov problems
  under domain perturbations.
\newblock {\em Calc.\ Var.\ Partial Differential Equations}, 58(1):Paper No.\
  33, 57 pp., 2019.

\bibitem{fichera}
G.~Fichera.
\newblock Su un principio di dualit\`a per talune formole di maggiorazione
  relative alle equazioni differenziali.
\newblock {\em Atti Accad. Naz. Lincei}, 19:411--418, 1955.

\bibitem{filinovskiy14}
A.~Filinovskiy.
\newblock On the eigenvalues of a {R}obin problem with a large parameter.
\newblock {\em Math.\ Bohem.}, 139:341--352, 2014.

\bibitem{filinovskiy15}
A.~Filinovskiy.
\newblock On the asymptotic behavior of the first eigenvalue of {R}obin problem
  with large parameter.
\newblock {\em J.\ Elliptic Parabol.\ Equ.}, 1:123--135, 2015.

\bibitem{filinovskiy15b}
A.~V. Filinovskiy.
\newblock On the estimates of the eigenvalues of the boundary value problem
  with large parameter.
\newblock {\em Tatra Mt.\ Math.\ Publ.}, 63:101--113, 2015.

\bibitem{lapflec}
Jacqueline Fleckinger and Michel~L. Lapidus.
\newblock Eigenvalues of elliptic boundary value problems with an indefinite
  weight function.
\newblock {\em Trans. Amer. Math. Soc.}, 295(1):305--324, 1986.

\bibitem{frank}
L.~S. Frank.
\newblock Coercive singular perturbations: eigenvalue problems and bifurcation
  phenomena.
\newblock {\em Ann.\ Mat.\ Pura Appl.}, 148(4):367--395, 1987.

\bibitem{fl1}
P.~Freitas and R~.S. Laugesen.
\newblock From {N}eumann to {S}teklov and beyond, via {R}obin: the {W}einberger
  way.
\newblock {\em American Journal of Mathematics}, page to appear.

\bibitem{fl2}
P.~Freitas and R.~S. Laugesen.
\newblock {From Steklov to Neumann and beyond, via Robin: the Szeg\H{o} way}.
\newblock {\em Canad.\ J.\ Math.}, 72(4):1024--1043, 2020.

\bibitem{frekrej15}
Pedro Freitas and David Krejčiřík.
\newblock The first robin eigenvalue with negative boundary parameter.
\newblock {\em Advances in Mathematics}, 280:322 -- 339, 2015.

\bibitem{ggs}
F.~Gazzola, H.~C. Grunau, and G.~Sweers.
\newblock {\em Polyharmonic boundary value problems. Positivity preserving and
  nonlinear higher order elliptic equations in bounded domains}.
\newblock Lecture Notes in Mathematics, Springer-Verlag, Berlin, 2010.

\bibitem{gs07}
T.~Giorgi and R.~Smits.
\newblock Eigenvalue estimates and critical temperature in zero fields for
  enhanced surface superconductivity.
\newblock {\em Z. Angew. Math. Phys.}, 58:224--245, 2007.

\bibitem{gre69}
W.~M. Greenlee.
\newblock Singular perturbation of eigenvalues.
\newblock {\em Arch.\ Rational Mech.\ Anal.}, 34:143--164, 1969.

\bibitem{gre81}
W.~M. Greenlee.
\newblock Stability theorems for singular perturbation of eigenvalues.
\newblock {\em Manuscripta Math.}, 34(2-3):157--174, 1981.

\bibitem{hkr17}
B.~Helffer, A.~Kachmar, and N.~Raymond.
\newblock Tunneling for the {R}obin {L}aplacian in smooth planar domains.
\newblock {\em Commun.\ Contemp.\ Math.}, 19:1650030, 38pp., 2017.

\bibitem{henrot}
A.~Henrot.
\newblock {\em Extremum problems for eigenvalues of elliptic operators}.
\newblock Frontiers in Mathematics, Birkh\"{a}user Verlag, Basel, 2006.

\bibitem{henry}
D.~Henry.
\newblock {\em Perturbation of the boundary in boundary-value problems of
  partial differential equations. With editorial assistance from Jack Hale and
  Ant\^{o}nio Luiz Pereira}.
\newblock London Mathematical Society Lecture Note Series 318, Cambridge
  University Press, Cambridge, 2005.

\bibitem{kato53}
T.~Kato.
\newblock Perturbation theory of semi-bounded operators.
\newblock {\em Math.\ Ann.}, 125:435--447, 1953.

\bibitem{kato76}
T.~Kato.
\newblock {\em Perturbation theory for linear operators}.
\newblock Grundlehren der Mathematischen Wissenschaften, vol.\ 132,
  Springer-Verlag, Berlin-New York, 2nd edition, 1976.

\bibitem{kaw}
B.~Kawohl, H.~A. Levine, and W.~Velte.
\newblock Buckling eigenvalues for a clamped plate embedded in an elastic
  medium and related questions.
\newblock {\em SIAM J.\ Math.\ Anal.}, 24(2):327--340, 1993.

\bibitem{khalile18}
M.~Khalile.
\newblock Spectral asymptotics for {R}obin {L}aplacians on polygonal domains.
\newblock {\em J.\ Math.\ Anal.\ Appl.}, 461:1498--1543, 2018.

\bibitem{krantz}
S.~G. Krantz.
\newblock {\em Function theory of several complex variables. Reprint of the
  1992 edition}.
\newblock AMS Chelsea Publishing, Providence, RI, 2001.

\bibitem{kuttler72}
J.~R. Kuttler.
\newblock Remarks on a {S}tekloff eigenvalue problem.
\newblock {\em SIAM J.\ Numer.\ Anal.}, 9:1--5, 1972.

\bibitem{kuttler68}
J.~R. Kuttler and V.~G. Sigillito.
\newblock Inequalities for membrane and {S}tekloff eigenvalues.
\newblock {\em J.\ Math.\ Anal.\ Appl.}, 23:148--160, 1968.

\bibitem{los98}
A.~A. Lacey, J.~R. Ockendon, and J.~Sabina.
\newblock Multidimensional reaction diffusion equations with nonlinear boundary
  conditions.
\newblock {\em SIAM J.\ Appl.\ Math.}, 58:1622--1647, 1998.

\bibitem{lambertisteklov}
P.~D. Lamberti.
\newblock Steklov-type eigenvalues associated with best {S}obolev trace
  constants: domain perturbation and overdetermined systems.
\newblock {\em Complex Var.\ Elliptic Equ.}, 59(3):309--323, 2014.

\bibitem{lala2007}
P.~D. Lamberti and M.~{ Lanza de Cristoforis}.
\newblock A real analyticity result for symmetric functions of the eigenvalues
  of a domain-dependent {N}eumann problem for the {L}aplace operator.
\newblock {\em Mediterr.\ J.\ Math.}, 4(4):435--449, 2007.

\bibitem{lala2004}
P.~D. Lamberti and M.~{Lanza de Cristoforis}.
\newblock A real analyticity result for symmetric functions of the eigenvalues
  of a domain dependent {D}irichlet problem for the {L}aplace operator.
\newblock {\em J.\ Nonlinear Convex Anal.}, 5(1):19--42, 2004.

\bibitem{laproz}
P.~D. Lamberti and L.~Provenzano.
\newblock {On Trace Theorems for Sobolev Spaces}.
\newblock {\em Le Matematiche}, 75(1):137--175, 2020.

\bibitem{lapro}
P.~D. Lamberti and L.~Provenzano.
\newblock On the explicit representation of the trace space ${H}^{\frac 3 2}$
  and of the solutions to biharmonic {D}irichlet problems on {L}ipschitz
  domains via multi-parameter {S}teklov problems.
\newblock {\em Rev.\ Mat.\ Complut.}, 2021.

\bibitem{lp08}
M.~Levitin and L.~Parnovski.
\newblock On the principal eigenvalue of a {R}obin problem with a large
  parameter.
\newblock {\em Math.\ Nachr.}, 281:272--281, 2008.

\bibitem{lions}
J.~L. Lions.
\newblock {\em \'Equations diff\'erentielles op\'erationnelles et probl\`emes
  aux limites}.
\newblock Grundlehren der mathematische Wissenschaften, vol.\ 111,
  Springer-Verlag, Berlin, 4th edition, 2013.

\bibitem{delzol}
J.-P.~Zolésio M.~C.~Delfour.
\newblock {\em Shapes and geometries. Analysis, differential calculus, and
  optimization}.
\newblock Advances in Design and Control, 4, Society for Industrial and Applied
  Mathematics (SIAM), Philadelphia, PA, 2001.

\bibitem{nadir}
N.~S. Nadirashvili.
\newblock Rayleigh's conjecture on the principal frequency of the clamped
  plate.
\newblock {\em Arch.\ Rational Mech.\ Anal.}, 129(1):1--10, 1995.

\bibitem{necas67}
J.~Ne\v{c}as.
\newblock {\em Les m\'ethodes directes en th\'eorie des \'equations
  elliptiques}.
\newblock Masson et Cie, Paris, 1967.

\bibitem{pankpop}
K.~Pankrashkin and N.~Popoff.
\newblock {Mean curvature bounds and eigenvalues of Robin Laplacians}.
\newblock {\em Calc.\ Var.\ Partial Differential Equations}, 54(2):1947--1961,
  2015.

\bibitem{pankpop16}
K.~Pankrashkin and N.~Popoff.
\newblock { An effective Hamiltonian for the eigenvalue asymptotics of the
  Robin Laplacian with a large parameter}.
\newblock {\em J. Math. Pures Appl.}, 106:615--650, 2016.

\bibitem{prozkala}
L.~Provenzano.
\newblock A note on the {N}eumann eigenvalues of the biharmonic operator.
\newblock {\em Math. Methods Appl. Sci.}, 41(3):1005--1012, 2018.

\bibitem{rellich}
F.~Rellich.
\newblock {\em Perturbation theory of eigenvalue problems}.
\newblock Gordon and Breach Science Publ., New York, 1969.

\bibitem{safvas}
Yu. Safarov~D. Vassiliev.
\newblock {\em The asymptotic distribution of eigenvalues of partial
  differential operators. Translated from the Russian manuscript by the
  authors}.
\newblock Translations of Mathematical Monographs, 155, American Mathematical
  Society, Providence, RI, 1997.

\bibitem{verchota}
G.C. Verchota.
\newblock {The biharmonic Neumann problem in Lipschitz domains}.
\newblock {\em Acta Math.}, 194:217--279, 2005.

\end{thebibliography}
\end{document}